\documentclass[12pt]{article}%
\usepackage{geometry}
\usepackage{amsmath}
\usepackage{amsfonts}
\usepackage{amssymb}
\usepackage{graphicx}%
\usepackage{float}
\newcommand*{\qed}{\hfill\ensuremath{\blacksquare}}%
\usepackage{mathabx}
\usepackage{xcolor}
\usepackage{mathtools}
\usepackage{mathrsfs}

\usepackage[colorlinks=true,linkcolor=blue,citecolor=magenta]{hyperref}

\providecommand{\U}[1]{\protect\rule{.1in}{.1in}}

\newtheorem{theorem}{Theorem}[section]

\newtheorem{assumption}[theorem]{Assumption}

\newtheorem{corollary}[theorem]{Corollary}

\newtheorem{definition}[theorem]{Definition}

\newtheorem{lemma}[theorem]{Lemma}

\newtheorem{proposition}[theorem]{Proposition}
\newtheorem{remark}[theorem]{Remark}

\newenvironment{proof}[1][Proof]{\noindent\textbf{#1.} }{\ \rule{0.5em}{0.5em}}


\renewcommand{\tilde}{\widetilde}

\renewcommand {\epsilon}{\varepsilon}

\newcommand{\EE}{\mathbb{E}}

\newcommand{\HH}{\mathbb{H}}

\newcommand{\LL}{\mathbb{L}}

\newcommand{\NN}{\mathbb{N}}

\newcommand{\PP}{\mathbb{P}}

\newcommand{\RR}{\mathbb{R}}

\newcommand{\dd}{\mathrm{d}}
\newcommand{\cL}{\mathcal{L}}

\newcommand{\fF}{\mathcal{F}}

\newcommand{\cX}{\mathcal{X}}
\newcommand{\bF}{\mathbf{F}}
\newcommand{\bC}{\mathbf{C}}

\newcommand{\sca}{\,\begin{picture}(-1,1)(-1,-3)\circle*{2.5}\end{picture}\ }

\begin{document}

\title{Microscopic derivation of non-local models  with anomalous diffusions from stochastic  particle systems}
\author{Marielle Simon\thanks{Universite Claude Bernard Lyon 1, CNRS, Ecole Centrale de Lyon, INSA Lyon, Université Jean Monnet, ICJ UMR5208,
69622 Villeurbanne, France. Email: \textsl{msimon@math.univ-lyon1.fr} \textit{and} GSSI, L'Aquila, Italy.}, Christian
Olivera\thanks{Departamento de Matem\'{a}tica, Universidade Estadual de
Campinas, Brazil. Email: \textsl{colivera@ime.unicamp.br}.}}

\date{}
\maketitle


\begin{abstract}
  This paper considers a large class of nonlinear integro-differential scalar equations which involve an  anomalous diffusion (\textit{e.g.}~driven by a fractional Laplacian) and a non-local singular convolution kernel. Each of those singular equations is obtained as the macroscopic limit of an interacting particle system  modeled as $N$ coupled stochastic differential equations driven by L\'{e}vy processes. In particular we derive quantitative estimates between  the  microscopic empirical measure of the particle system and the solution to the limit equation in some non-homogeneous  Sobolev space. 
 Our result only requires very weak regularity on the interaction kernel, therefore it includes numerous applications, \textit{e.g.}: the $2d$
turbulence model (including the quasi-geostrophic equation) in  sub-critical regime, the $2d$ generalized Navier-Stokes equation, the fractional Keller-Segel equation in any dimension, and the fractal Burgers equation. 
\end{abstract}

\noindent \textit{{\bf Key words and phrases:} Stochastic differential equations;  L\'{e}vy processes;
 coupled stochastic particle systems; non-local models.}

\vspace{0.3cm} \noindent {\bf MSC2010 subject classification:} 60H20, 60H10, 60F99, 35K55.

\section{Introduction\label{Intro}}

The  main goal of this paper is to provide  a microscopic derivation  of non-local partial differential equations
from interacting stochastic particle systems. Precisely,  we consider the following class of scalar equations, written on the form
\begin{equation}\label{eq:PDE}
    \partial_t u(t, x)-  \mathcal{L} u(t, x)+  \mathrm{div}( K(u) u   )(t,x)  = 0, \qquad (t,x) \in \RR_+\times \RR^d,
\end{equation}
where $u:\RR_+\times\RR\to\RR$ is a scalar field, and some initial data  $ u(0,\cdot)=  u_{0}(\cdot)$ is given.
Here, $\mathcal L$ is the non-local operator corresponding to a symmetric $\sigma$-stable L\'evy process with $\sigma \in (1,2)$ (as defined in \eqref{OpeL} below) and $K$ is a singular integral  operator which satisfies a few assumptions specified in Section \ref{sec:setting}. Remarkably, under those assumptions, equation \eqref{eq:PDE} covers numerous well-known examples. Let us cite for instance the  subcritical \textit{$\eta$-turbulence model} in $2d$ \cite{Iwa}, which governs some geo-physical fluids. More illustrations will be detailed in Section \ref{sec:ex}.

\bigskip

Our main contribution is to provide a rigorous microscopic derivation  of the class of  
singular non-local equations of the form \eqref{eq:PDE}, supplemented by quantitative convergence estimates.  More precisely, for any $N\in\mathbb{N}$, 
we  consider the  $N$-particle dynamics evolving according to a system  of coupled stochastic differential equations in
 $\mathbb{R}^{d}$ of the following general form: for any $i=1,\dots,N$, the position $X_t^{i,N} \in \RR^d$ of the $i$--th particle at time $t$ is ruled by
\begin{equation}
\dd X_{t}^{i,N}=   \bF\bigg(\frac{1}{N}\sum_{k=1}^{N}  K(V^{N})(X_{t}^{i,N} -X_{t}^{k,N})\bigg) \; \dd t + \dd L_{t}^{i}.
 \label{itoassS0}%
\end{equation}
where: 
\begin{itemize}
\item the family of processes $\{t\mapsto L_{t}^{i}\}_{i\in\mathbb{N}}$ is a family of independent $\mathbb{R}^{d}$-valued symmetric $\sigma$-stable   L\'{e}vy processes on a filtered probability space $\left(\Omega,\mathcal{F},(\mathcal{F}_{t})_{t\geqslant 0},\mathbb{P}\right)$, whose infinitesimal generator is given by $\mathcal{L}$ and $\sigma\in(1,2)$. As an example, one may think about a fractional diffusion generated by the  fractional Laplacian $\mathcal{L}=-(-\Delta)^{\frac{\sigma}{2}}$. Note that the case $\sigma=2$ would correspond to standard Brownian motion;

\item the \emph{interaction potential} $V^N: \RR^d\to \RR_+$ is continuous and can be written in the form $V^N(x)=N^{d\beta}V(N^{\beta}x)$ for some $\beta >0$ and some smooth probability density $V$ which is compactly supported. Therefore, the particles \textit{moderately} interact with each other: in other words, the particles $i$ and $j$ are interacting only when  $|X_{i}-X_{j} |\lesssim   N^{-\beta}$;
    
\item the function $\bF:\RR\to\RR$ is bounded, $C^2$ regular, and is very close to the identity function on a compact set (see  Section \ref{sec:setting} for a precise definition).
 \end{itemize} 
The microscopic \emph{empirical process} of this $N$-particle system, which is a probability measure on the ambient space $\RR^d$,  is given as usual by \begin{equation}\mu_{t}^{N}:=\frac{1}{N} \sum_{i=1}^{N}\delta_{X_{t}^{i,N}}, \qquad t\geqslant 0, \label{eq:mutN}\end{equation} where $\delta_{a}$ is the delta Dirac measure concentrated at $a \in \RR^d$. Then, $(\mu_{t}^{N})_{t\geqslant 0}$ is a  measure--valued process associated to the $\RR^d$--valued processes $\{t\mapsto X_{t}^{i,N}\}_{i=1,...,N}$.  

The main goal of this paper is to investigate the large $N$ limit of  the dynamical process
$(\mu_{t}^{N})_{t\geqslant 0}$.
For that purpose, we introduce the \emph{mollified empirical measure} \[u_t^{N}:= V^{N} \ast \mu_t^{N} = \int_{\RR^d} V^N(\cdot-y) \,\mu_t^N(\dd y),\] 
which is more regular than $\mu_t^N$. Theorem \ref{th:rate2sing} below provides  a quantitative estimate of the distance between the mollified measure $u_t^N$ 
and the unique solution  $u$ of  equation \eqref{eq:PDE}, which   takes the following form: 
there exist constants $c,C>0$ such that for any $N\in\mathbb{N}$,
\begin{align*} 
   \sup_{t\in [0,T]}\EE\Big[  \| u_t^N-u \|_{\tilde{\mathcal{X}}}\Big]
&\leq  C\EE \Big[\|u^N_0- u_0 \|_{\tilde{\mathcal{X}}}\Big]   + c N^{-\varrho},
\end{align*}
where $\varrho$ is an explicit positive parameter, $T>0$ is a time horizon, and $\tilde{\mathcal X}$ is some functional space, suitably chosen. In particular, the  left-hand side distance  vanishes as $N\to \infty$, provided that the latter is true at time $t=0$.  As a corollary we also obtain the convergence of the original empirical measure $\mu_t^N$ to the solution of \eqref{eq:PDE}, as stated in Corollary \ref{Colo}.
The main novelty of the present work is the derivation of quantitative convergence rates thanks to a suitable choice of the norm  measuring the distance between the approximation and the solution. In particular we are able to show convergence of the empirical  measure without using compactness arguments.

\bigskip
 
Along the proof, two cases are distinguished, depending on the assumptions made on the singular kernel $K$ in Assumption \ref{assump}: \begin{itemize} \item  in \textbf{Case 1} (which will include  as illustrating examples  the $2d$ $\eta$-turbulence models and the scalar Burgers equation), the space $\tilde{\mathcal{X}}$ corresponds to some Bessel potential space $\mathbb{H}_p^\alpha$ (see \eqref{defH}) endowed with its natural norm denoted below by $\|\cdot\|_{\alpha,p}$ ;
\item in \textbf{Case 2} (which will include the $2d$ Navier-Stokes equation and the Keller-Segel models), the space $\tilde{\mathcal{X}}$ becomes $\mathbb{H}_p^{\alpha} \cap \mathbb{L}^1$ endowed with a \emph{distorted norm} which reads as \[\|u\|_{\tilde{\mathcal{X}}}=\|u\|_{\alpha,p}+\|K(u)\|_{\mathbb{L}^p}.\]
\end{itemize}
 The proof takes some inspiration from the semi-group approach exposed for the first time  in \cite{flan}
and then used in a series of works \cite{flan4,flan3,flan6,  Simon-O, Toma}. We note that the genesis of the stochastic approximations introduced in this series comes from the original papers of Oelschl\"ager  \cite{Oel1} and Jourdain and Méléard \cite{Mela}, where moderate interacting particles are investigated. The starting point consists in applying Itô's formula to any observable of the form $\phi(  X_{t}^{i,N})$, for some class of  test functions $\phi$, and to obtain the equation satisfied by the mollified measure $u_t^N$ in \textit{mild form} (see \eqref{eq:mild2} below). Then, most of the argument  lies in  estimating the distance with the continuous solution by means of  semi-group estimates, leading to the necessary conditions to apply Gronwall's Lemmas.  In \textbf{Case 2}, the distorted norm comes out due to the weaker assumption made on the operator $K$, which prevents from directly closing the Gronwall inequalities. 

\subsection*{Related works}

The problem of deriving non-local models with anomalous diffusion from microscopic models of  stochastic interacting particles
has not been highly investigated in the literature. As we will see  in more details in Section \ref{sec:ex}, our result may be applied to the following well-known models: the $2d$ quasi-geostrophic equation SQG \cite{Const}; its generalized version, namely the $2d$ $\eta$-turbulence model \cite{Iwa}; the scalar fractal Burgers equation \cite{Biler}; the fractional parabolic-elliptic Keller-Segel model \cite{Biler2}; and the $2d$ Navier-Stokes equation \cite{WuNS}. Besides, up to our knowledge, there is no result providing quantitative  estimates in particle approximations of such equations, despite the great importance 
it represents both theoretically and in applications.

The works   \cite{Biler,Mela2} provide a particle approximation of PDEs of type \eqref{eq:PDE},
when $K(u)= k\ast u$  and $k(x)\approx \frac{1}{|x|^{d-\epsilon}}$,     with $0<\epsilon<d$. More precisely, in
\cite{Mela2} the authors construct a McKean-style non-linear process for the cutoff versions of this equation, 
and then use it in order to create an interacting particle system whose empirical measure  strongly converges
to the solution. In  \cite{Biler}  a weak convergence result of this type had been previously obtained.   However their study does not cover  operators  $K$   with singularity of order $-d$ (for instance $k(x)\approx \frac{1}{|x|^{d}}$ like in the  $2d$-SQG equation),  and besides, the distance between the approximation and the continuous solution has not been quantitatively estimated. Recently, \cite{flan5}
 and  \cite{Romi} have  obtained a (also non quantitative) microscopic derivation of  the inviscid and dissipative generalized surface quasi-geostrophic (SQG) equation.

	In the case of  Brownian diffusions (instead of Lévy processes) there are some recent works  on PDEs with singular kernels:  in \cite{Jabin}, the authors 
	work under some assumption which includes the Biot-Savart kernel, but  does not cover  the Coulomb and  Keller-Segel kernels ; in  \cite{Bres},  combining the relative entropy with a modulated energy introduced by  Serfaty in \cite{Serfaty},  the authors work with attractive, gradient-flow  kernels including the $2d$ Keller-Segel model.  For recent advances around this  approach, we refer for instance to \cite{Gui, Serfa}. Finally in \cite{Toma}  the authors work under very general  assumptions
on the kernel $K$ (including the Keller-Segel model in any dimension) but it does not cover the  singularity of order $-d$. Let us also mention the work \cite{BossyTalay} which investigates the Burgers equation, and \cite{MeleardNS2d} where the authors derive the $2d$ Navier-Stokes equation. Also, \cite{Carri} and \cite{Fournier}  obtain a rate of convergence for the intricate Landau equation. 
We finally refer to  \cite{Jabin2}  and references therein for a recent review on the mean-field limit problems for stochastic interacting particle systems.

\subsection*{Outline of the paper}

In Section \ref{sec:results} we define the microscopic dynamics, in particular we introduce all the quantities  which appear in the stochastic particle system \eqref{itoassS0}, and state our assumptions. The main results are stated in Theorem \ref{th:rate2sing} and Corollary \ref{Colo}, then we conclude Section \ref{sec:results} by listing several examples which are covered by our work. In Section \ref{sec:prelim} we recall some classical tools which will be needed in the proof, and write as a starting point the equation satisfied by the mollified empirical measure in mild form. Section \ref{sec:proof1} (resp.~Section \ref{sec:proof2}) is devoted to the proof of Theorem \ref{th:rate2sing} (resp.~Corollary \ref{Colo}).

\section{Main results and examples of applications}
\label{sec:results} 

Let us start by introducing some notation.

\subsection{Some notations} \label{sec:notation}

\begin{itemize}
\item For any measured space  $(S,\Sigma,\mu)$, the standard $\mathbb{L}^p(S)$--spaces of real-valued functions, with $p \in [1,\infty]$, are provided with their usual norm denoted by $\| \cdot \|_{\mathbb L^p(S)}$, or $\|\cdot \|_{\LL^p}$ whenever the space $S$ will be clear to the reader, and we  denote by $\langle f,g\rangle_{\LL^2}$ the inner product on $\mathbb{L}^2(S)$ between two functions $f$ and $g$. In more general cases, if the functions take values in some space $X$, the notation becomes $\mathbb{L}^p(S ;  X).$ Finally, 
the norm $\Vert \cdot\Vert_{\mathbb L^p(S) \to \mathbb L^p(S)}$ is the usual operator norm. 

\item The space of smooth real-valued functions with compact support in $\mathbb{R}^d$ is denoted by $C_0^\infty(\mathbb{R}^d)$. The space of functions of class $C^k$ with $k \in \mathbb{N}$ is denoted by $C^k(\mathbb{R}^d)$. The space of  \emph{bounded} functions of class $C^k$ is denoted by $C_b^k(\RR^d)$.
Finally, we denote by $\mathscr{C}_b^\eta$ the set of bounded real-valued functions defined on $\RR^d$ which are H\"older continuous with exponent $\eta \in (0,1]$. Its corresponding norm is denoted by 
\[[f]_\eta:= \|f\|_{\LL^\infty}+ \sup_{\substack{x,y\in\RR^d\\x \neq y}} \frac{|f(x)-f(y)|}{\|x-y\|^\eta}. \]
\item For any  $\alpha\in \mathbb{R}, p\geqslant 1$, we denote by
$\HH_{p}^{\alpha} $ the \textit{Bessel potential space}
\begin{equation}\label{defH}\HH_{p}^{\alpha}=\HH_{p}^{\alpha}(  \mathbb{R}^{d}):= \Big\{ u \in \mathcal{S}'(\mathbb R^d) \; ; \;  \mathcal{F}^{-1}\Big( \big(1+|\cdot|^{2}\big)^{\alpha/2%
}\; \mathcal{F} u(\cdot) \Big) \in \mathbb L^p(\mathbb R^d)\Big\},  \end{equation}
where $\mathcal Fu$ denotes the \textit{Fourier transform} of $u$. These spaces are endowed with their norm 
\[
\left\Vert u\right\Vert _{\alpha,p}%
:=\left\Vert \mathcal{F}^{-1}\Big((1+|\cdot|^{2})^{\alpha/2%
}\;\mathcal{F} u(\cdot)\Big)\right\Vert _{\mathbb L^{p}\left(  \mathbb{R}^{d}\right)  }%
<\infty.
\]
%
%
We recall some well-known Sobolev embeddings, see  \cite[p.~203]{Triebel},  which we will use in the forthcoming proofs. Let us fix $p,q \geqslant 1$. 
\begin{align}
&\text{If } \alpha > 0 & \text{then: } 
&\; \HH_p^\alpha \hookrightarrow \LL^p \quad \text{and} \quad \left\Vert \cdot \right\Vert _{\LL^p} \lesssim \left\Vert  \cdot \right\Vert _{\alpha,p}  \label{eq:SE0} \tag{SE0}  \\
&\text{If }  \lambda > \tfrac d p \; &\text{then: }
& \; \HH_{p}^{\lambda} \hookrightarrow \mathscr{C}_b^{\lambda-\frac d p} \quad \text{and} \quad  [ \cdot ]_{\lambda-d/p} \lesssim \| \cdot \|_{\lambda,p}  \label{eq:SE1} \tag{SE1} \\ 
&\text{If } p> q & \text{then: }
&\; \HH_{q}^{\alpha+ \frac dq- \frac dp}\hookrightarrow  \HH_{p}^{\alpha} \quad \text{and} \quad
 \| \cdot \|_{\alpha,p}\lesssim \| \cdot \|_{\alpha + \frac{d}{q}-\frac{d}{p},q} .
 \label{eq:SE2} \tag{SE2}
\end{align}
Above we write $ f \lesssim g $ if there exists a universal constant $C>0$ such that $f \leqslant C g$.
\item For any $\alpha \in \RR$, we denote by $\dot{W}_{p}^{\alpha}$  the standard \emph{homogeneous Sobolev space} defined by
\[
\dot{W}_{p}^{\alpha}:= (-\Delta)^{-\alpha/2}\big(\LL^{p}(\mathbb{R}^d)\big).
\]
endowed with the norm $|f|_{\alpha,p}:= \| (-\Delta)^{\alpha/2} f\|_{\LL^p}$. We refer the reader to 
\cite{Gau, Triebel} for more properties of  this space.

We only highlight the fact that, for any $\eta\in \mathbb{R}$, the operator 
$(-\Delta)^{\eta/2}$ is an isomorphism from $\dot{W}_{p}^{\alpha}$ to $\dot{W}_{p}^{\alpha-\eta}$.

\item The expectation with respect to $\PP$ is denoted by $\EE$. We say that a function $f:\RR^d \to \RR$ is a probability density if
$f\ge 0 $  and  $\int_{\RR^d} f(x)dx=1$.

\end{itemize}

\subsection{Setting and hypothesis }
\label{sec:setting}

Let us now be more precise about several quantities which appear in the particle dynamics \eqref{itoassS0}, \textit{e.g.}~the interaction potential $V^N$, or the function $\mathbf{F}$. We will also specify the initial condition of the particle system, which is  random, and  supposed to be ``almost chaotic'' (see points \emph{1.}~and \emph{2.}~in Assumption \ref{assump} below). Finally, we will give the definition of the notion of solution to \eqref{eq:PDE}.

\medskip

Let us start with the function $\bF$. We first take  $M>0$ (which will be chosen ahead) and let $F_M: \RR \rightarrow\RR$ be a function in $C_b^2(\RR)$ such that
\[ F_M(x) = \begin{cases} M & \text{ if } x > M+1, \\
x & \text{ if } x \in [-M,M], \\
-M & \text{ if } x < -(M+1), \end{cases}\]
and moreover 
\[\|F_{M}^{\prime}\|_{\LL^\infty} \leq 1, \quad \text{ and } \quad \|F_{M}^{\prime\prime}\|_{\LL^\infty} <\infty, \quad \text{ therefore } \quad \|F_M\|_{\LL^\infty}\leq M+1.\]
We now build an $\RR^d$--valued function by defining
\begin{equation}\label{eq:defF0}
 \bF_{M}  : (x_{1},\dots, x_{d})\mapsto \big(F_{M}(x_{1}) ,\dots, F_{M}(x_{d})\big).
\end{equation}
We observe that  if $M=+\infty$,   then $\bF_{M}(x)=x$. The function $\bF$ driving the particle dynamics \eqref{itoassS0} will be equal to $\bF_M$ for some $M$ suitably chosen, in next Section \ref{sec:result}.

Let us now give the main assumptions on all the other parameters.

\begin{assumption}\label{assump}

Recall $\sigma \in (1,2)$. We  assume that 
\begin{enumerate}
\item \emph{(Interaction potential) } 

There exists $\beta >0$ and $V\in  C_{0}^{\infty}(\mathbb{R}^{d})$ a smooth probability density such that, for any $x \in \RR^{d}$, \[V^{N}(x)=N^{d\beta}V(N^{\beta}x)\; \emph{;}\]

\item \emph{(Initial condition) } 

There exist $p  > \max\big\{ \frac{d}{\sigma -1}\, ; \, 2\big\}$ and $\alpha>0$  with   \begin{equation}\label{eq:condalpha}\tfrac dp < \alpha < \sigma-1 <1 \end{equation} such that 
\begin{equation}
\sup_{N \in \NN} \EE\left[  \big\|  V^{N} \ast  \mu_{0}^{N} \big\|_{\alpha,p}^{2}   \right] \ 
<\infty \; ;  \label{initial cond}%
\end{equation}
\item \emph{(Linear operator)}

 The operator $K$ satisfies one of the following two properties:
\begin{itemize}
\item[\emph{\textbf{Case 1 --}}]
 There exists $\lambda$ such that $\frac{d}{p}<\lambda\leq \alpha$  and  \[K  :  \HH_{p}^{\alpha}\longrightarrow  \HH_{p}^{\lambda}\] is a linear  continuous  operator. 
In particular there exists $C_K>0$ such that, for any $u \in \HH_p^\alpha$, 
\begin{equation}  \|K(u)\|_{\lambda,p} \leqslant C_K \|u\|_{\alpha,p} \, . \label{eq:defCK}
\end{equation}
\item[\emph{\textbf{Case 2 --}}] There exists $\lambda$ such that $\frac{d}{p} < \lambda\leqslant \alpha $  and   
\begin{align*}
K :\; & \LL^{1}\cap \LL^{p} \longrightarrow  \HH_{p}^{\lambda}\\
:\;  & \dot{W}_{p}^{0}=\LL^{p}\longrightarrow  \dot{W}_{p}^{1} \quad 
\end{align*}    
is a linear  continuous  convolution  operator. In particular, there exists $\tilde C_K>0$ such that, for any $u \in  \LL^1\cap \LL^p$, 
\begin{align}
\|K(u)\|_{\lambda,p} &\leqslant \tilde C_K \big( \|u\|_{\LL^p} + \|u\|_{\LL^1}\big) \label{eq:defCK2}
\\ 
\text{and} \qquad |K(u)|_{1,p} &\leqslant \tilde C_K \|u\|_{\LL^p} \, .\label{eq:defCK3}
\end{align}
\end{itemize}

  \item \emph{(Relation between parameters)} 
  
  Finally, there exists $\delta > 1 - \frac\sigma 2$ such that the parameters  $(\beta,\alpha,p)$ satisfy: 
	\begin{equation} \label{eq:alpha} 
	0 < \beta <  \frac{1}{2(\alpha+d -\frac{d}{p}+ \delta)  }.
\end{equation}
Note that in the case $\sigma \to 2$, the parameter $\delta$ can be chosen arbitrarily small.

\end{enumerate}

\end{assumption}

\begin{remark} In \cite[Lemma 2.9]{flan3} the authors show 
that if $X_{0}^{i}  , i=1,...,N, $ are independent identically distributed r.v with common regular  probability density 
$u_{0}$ then  \eqref{initial cond} is  verified.  The  condition  \eqref{initial cond} 
 is necessary for the estimation of $u_{t}^{N}$ given in Lemma \ref{estimation2}.
\end{remark}

In Section \ref{sec:ex} we give several examples of convolution kernels which satisfy Assumption \ref{assump}.

\medskip

%
%
%
%
%
%
%
%
%
%
%
%
%

From now on, we always work under Assumption \ref{assump}. We denote by  $(\mathcal{X},\|\cdot\|_{\mathcal{X}})$ the metric space 
\[ \big(\mathcal{X},\|\cdot\|_{\mathcal{X}}\big) := \begin{cases}
\big(\HH_{p}^{\alpha},\|\cdot\|_{\alpha,p}\big) & \text{ if we are in \textbf{Case 1}},\vphantom{\Big(}\\
\big(\LL^{1}\cap \LL^{p},\|\cdot\|_{\LL^1}+\|\cdot \|_{\LL^p}\big) & \text{ if we are in \textbf{Case 2}}. \vphantom{\Big(}\end{cases} \]
With these notations, point \emph{3.}~of Assumption \ref{assump} can be rewritten as follows: there exists $\lambda$ such that $\frac d p < \lambda \leqslant\alpha$, and a constant $\mathbf{C}_K >0$ such that, for any $u\in\mathcal{X}$,
\begin{equation}
\label{eq:dCK}
\|K(u)\|_{\lambda,p} \leqslant \mathbf{C}_K \|u\|_{\mathcal{X}}.
\end{equation}
and besides in \textbf{Case 2}
\begin{equation}
\label{eq:dCK2}
|K(u)|_{1,p} \leqslant \mathbf{C}_K \|u\|_{\LL^p}.
\end{equation}
\bigskip

We now define the notion of solutions to \eqref{eq:PDE}, which are understood in the mild sense:
\begin{definition}\label{def:defMild}
Given $u_0 \in \mathcal{X}$ and a time horizon $T>0$,
 a function $u$ defined on $[0,T] \times \RR^d$ is said to be a mild solution to \eqref{eq:PDE} on $[0,T]$ if
\begin{enumerate}
\item[(i)] $u\in C([0,T]; \mathcal{X}) $ and
\item[(ii)] $u$ satisfies the integral equation
\begin{equation}
\label{eq:mildKS}
u_{t} =  e^{t\cL} u_0 -  \int_0^t \nabla  e^{(t-s)\cL }  (u_{s}\,  K(u_{s}))\ \dd s, \quad \text{for any } 0 \leq t \leq T.
\end{equation}
\end{enumerate}
A function $u$ defined  on $\RR_+\times \RR^d$ is said to be a global mild solution to \eqref{eq:PDE} if it is a mild solution to \eqref{eq:PDE} on $[0,T]$ for any $T>0$.
\end{definition}

\begin{remark} \label{rem1} Let $u$ be a mild solution to  (\ref{eq:PDE}) on $[0,T]$, and assume that $K$ satifies \eqref{eq:dCK} for some parameters $\frac dp <\lambda \leq \alpha  $. Then,  using the Sobolev embedding \emph{(SE1)}   we have
\[
\| K(u)   \|_{\LL^{\infty}([0,T] \times \RR^{d})}\leq   \sup_{t\in [0,T]} \| K(u_t) \|_{\lambda, p}\leq  \bC_{K}  \sup_{t\in [0,T]} \|u_t \|_{\mathcal{X}}\, .
\]
Therefore,  $u$ is   also a mild solution of a companion PDE, in the sense that it satisfies the following integral equation
\begin{equation}
\label{eq:mildKSbis }
u_{t} =  e^{t\cL} u_0 -  \int_0^t \nabla  e^{(t-s)\cL }  \big(u_{s}\,  \bF_M(K(u_{s})) \big)\ \dd s, \quad\text{for any } 0 \leq t \leq T,
\end{equation}
where $\bF_M$  is the function defined in   (\ref{eq:defF0}) with $M$ being any fixed constant such that  \begin{equation} M \geqslant \mathbf{C}_{K} \sup_{t\in [0,T]} \|  u_t \|_{\mathcal{X}}. \label{eq:condM}\end{equation} 

\end{remark}

 To sum up, we have the following proposition:
\begin{proposition}\label{propu}
Given $u_0\in\cX$ and $T>0$, there exists a unique mild solution $u$ to the initial-valued PDE problem \eqref{eq:PDE} on $[0,T]$. 

Moreover, if $v \in C([0,T];\cX)$ satisfies the integal equation \eqref{eq:mildKSbis }, with the constant $M$ verifying $M \geqslant \mathbf{C}_K \sup_{t\in [0,T]} \|  u_t \|_{\mathcal{X}}$, then $v$ coincides with $u$.
\end{proposition}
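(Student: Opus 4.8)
The plan is to recast both assertions as consequences of a Banach fixed point analysis of the integral map
\[
\Phi(u)_t := e^{t\cL} u_0 - \int_0^t \nabla e^{(t-s)\cL}\big(u_s\,K(u_s)\big)\,\dd s ,
\]
carried out in $C([0,T];\cX)$, together with the observation recorded in Remark \ref{rem1}. The only external input is the family of smoothing estimates for the symmetric $\sigma$-stable semigroup $(e^{t\cL})_{t\geq 0}$ collected in Section \ref{sec:prelim}; I would use them in the scaled form $\|\nabla e^{t\cL} f\|_{\alpha,p}\lesssim t^{-\frac{1+\alpha}{\sigma}}\|f\|_{\LL^p}$ in \textbf{Case 1}, and $\|\nabla e^{t\cL}\|_{\LL^q\to\LL^q}\lesssim t^{-\frac1\sigma}$ for $q\in\{1,p\}$ in \textbf{Case 2}. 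The decisive point is that the resulting time-kernels $(t-s)^{-\frac{1+\alpha}{\sigma}}$ and $(t-s)^{-\frac1\sigma}$ are integrable on $[0,t]$: for the first this holds \emph{precisely because} $\alpha<\sigma-1$ by \eqref{eq:condalpha}, and this integrability is what makes the whole scheme close.

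First I would establish the nonlinear estimate on the product $u_s K(u_s)$. Since $\lambda>\tfrac dp$, the embedding \eqref{eq:SE1} gives $\HH_p^\lambda\hookrightarrow\LL^\infty$, so that \eqref{eq:dCK} yields
\[
\|u_s K(u_s)\|_{\LL^p}\leq \|u_s\|_{\LL^p}\,\|K(u_s)\|_{\LL^\infty}\lesssim \|u_s\|_{\LL^p}\,\|K(u_s)\|_{\lambda,p}\leq \bC_K\,\|u_s\|_{\LL^p}\,\|u_s\|_\cX ,
\]
with $\|u_s\|_{\LL^p}\lesssim\|u_s\|_\cX$ by \eqref{eq:SE0} in \textbf{Case 1} and trivially in \textbf{Case 2}; the same bound holds verbatim for the $\LL^1$-norm in \textbf{Case 2}. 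Inserting this into the semigroup estimate and integrating in time gives, on a short interval $[0,T_0]$,
\[
\sup_{t\leq T_0}\|\Phi(u)_t\|_\cX \;\leq\; \|u_0\|_\cX + C\,T_0^{\,1-\frac{1+\alpha}{\sigma}}\Big(\sup_{t\leq T_0}\|u_t\|_\cX\Big)^{2},
\]
and an entirely analogous computation on $\Phi(u)-\Phi(v)$, using the bilinear splitting $u_sK(u_s)-v_sK(v_s)=(u_s-v_s)K(u_s)+v_sK(u_s-v_s)$ and the linearity of $K$, produces a Lipschitz bound with the same positive power of $T_0$. Choosing $T_0$ small (depending only on $\|u_0\|_\cX$) makes $\Phi$ a contraction on a ball of $C([0,T_0];\cX)$, hence a unique local mild solution exists.

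The hard part, and the place where the structure of the problem really matters, is upgrading this local solution to a global one on an arbitrary $[0,T]$: the nonlinearity is quadratic, so a direct Gronwall argument on the bound above diverges. This is exactly the role of the truncation $\bF_M$. For the companion equation \eqref{eq:mildKSbis }, the factor $\bF_M(K(u_s))$ is bounded in $\LL^\infty$ by $M+1$ \emph{uniformly in $u$}, so the estimate above becomes \emph{linear},
\[
\big\|u_s\,\bF_M(K(u_s))\big\|_{\LL^p}\leq (M+1)\,\|u_s\|_{\LL^p}\lesssim (M+1)\,\|u_s\|_\cX ,
\]
and the mild formulation then yields the weakly singular \emph{linear} Gronwall inequality
\[
\|u_t\|_\cX \leq \|e^{t\cL}u_0\|_\cX + C(M+1)\int_0^t (t-s)^{-\frac{1+\alpha}{\sigma}}\,\|u_s\|_\cX\,\dd s .
\]
Its kernel is integrable, so the fractional Gronwall lemma bounds $\sup_{t\leq T}\|u_t\|_\cX$; combined with the local uniqueness iterated over consecutive subintervals of uniform length, this shows that \eqref{eq:mildKSbis } admits a \emph{unique} global solution in $C([0,T];\cX)$ for each fixed $M$. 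Global existence and uniqueness for \eqref{eq:PDE} itself then follow by continuing the local solution: by the blow-up alternative it suffices to control the $\cX$-norm, which is furnished by the subcritical exponent conditions $p>\tfrac{d}{\sigma-1}$ and $\alpha<\sigma-1$ (for any $M$ exceeding the current size of the solution the truncation is inactive, so \eqref{eq:PDE} and \eqref{eq:mildKSbis } coincide along it).

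It remains to prove the second assertion, which is now immediate. Let $u$ denote the unique solution of \eqref{eq:PDE} and fix any $M\geq \bC_K\sup_{t\in[0,T]}\|u_t\|_\cX$. By Remark \ref{rem1}, $u$ solves the companion equation \eqref{eq:mildKSbis } with this value of $M$. Since \eqref{eq:mildKSbis } has a unique solution in $C([0,T];\cX)$, any $v\in C([0,T];\cX)$ satisfying \eqref{eq:mildKSbis } with the same $M$ must equal $u$, which is the claim.
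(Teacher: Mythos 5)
Your local fixed-point argument, the product estimate via \eqref{eq:SE1} and \eqref{eq:dCK}, and your proof of the second assertion are exactly the ``standard contraction principle'' route: the paper in fact omits the proof of Proposition \ref{propu} altogether and cites \cite{Lema} and \cite{Toma} for it, so on these points your write-up supplies details the paper only references, and they are correct. In particular, your key observation — that the truncation $\bF_M$ turns the quadratic nonlinearity into one bounded linearly by $(M+1)\|u_s\|_{\cX}$, so that the truncated problem \eqref{eq:mildKSbis } is globally well-posed by a weakly singular Gronwall inequality whose kernel $(t-s)^{-(1+\alpha)/\sigma}$ is integrable precisely because $\alpha<\sigma-1$ — is sound, and combined with Remark \ref{rem1} it correctly yields the uniqueness statement ``$v$ coincides with $u$''.

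The genuine gap is in your local-to-global step for \eqref{eq:PDE} itself. Saying that the needed a priori $\cX$-bound ``is furnished by the subcritical exponent conditions'' is not an argument: $p>\frac{d}{\sigma-1}$ and $\alpha<\sigma-1$ only make the time kernel integrable, hence give a local existence time depending on $\|u_0\|_{\cX}$ and a blow-up alternative; for a quadratic nonlinearity they do not bound $\sup_{t\leq T}\|u_t\|_{\cX}$. Your parenthetical bootstrap through the truncation is circular: the singular Gronwall bound for \eqref{eq:mildKSbis } has a constant growing (exponentially) with $M$, so one cannot choose $M$ larger than the bound it produces, and ``the truncation is inactive'' only on the interval where the untruncated solution is \emph{already known} to stay below $M/\bC_K$. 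Nor can this be repaired abstractly: the assumptions of \textbf{Case 2} cover the fractional parabolic--elliptic Keller--Segel kernel, for which finite-time blow-up of large solutions is known (see \cite{Biler3}), so existence of a mild solution on an \emph{arbitrary} $[0,T]$ cannot follow from Assumption \ref{assump} alone. This existence statement is exactly what the paper outsources to \cite[Theorem 13.2]{Lema} and \cite[Proposition 1.2]{Toma}; replacing your globalization paragraph by an appeal to those results (or by restricting $T$ to the maximal existence interval of the local solution), the rest of your proof stands.
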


\proof{We omit the proof, which follows from a standard contraction principle, see for instance 
 \cite[Theorem 13.2]{Lema} and \cite[Proposition 1.2]{Toma}.}

\subsection{Statement of the results} 
\label{sec:result}

From now on we fix $T>0$ and an initial data $u_0 \in \cX$. From Proposition \ref{propu}, we consider the unique mild solution $u \in C([0,T];\cX)$ to \eqref{eq:PDE} as given in Definition \ref{def:defMild}, and we settle
\begin{equation}\label{eq:defMT} M_T:= M_T(u)= \bC_K \sup_{t\in[0,T]} \|u\|_\cX.\end{equation}
Finally, we consider the particle dynamics $\{X_t^{i,N}\}_{i=1,\dots,N}$ ruled by \eqref{itoassS0} with $\bF:=\bF_{M}$ for some $M \geqslant M_T$.

\medskip

The main theorem of this paper gives an explicit bound on the distance between the mollified measure $u_t^N=V^N \ast \mu_t^N$, where $\mu_t^N$ is defined in \eqref{eq:mutN},  and the solution $u$. This distance is defined in two different ways which correspond to the two different cases. In \textbf{Case 1}, 
we will estimate the distance with respect to the norm $\|\cdot\|_{\alpha,p}$ in the space $\HH_{p}^\alpha$, but in \textbf{Case 2}, we need to ``distort" the norm. Let us then define: 
\begin{itemize} 
\item in \textbf{Case 1}, $\tilde{\mathcal{X}}:=\HH_{p}^\alpha$, endowed with the norm $\| \cdot \|_{\tilde{\mathcal{X}}}:=\|\cdot\|_{\alpha,p}$ 
\item in \textbf{Case 2}, $\tilde{\mathcal{X}} :=\HH_p^\alpha\cap \LL^1$, endowed with  $\| u \|_{\tilde{\mathcal{X}}}:= \|u\|_{\alpha,p} + \|K(u)\|_{\LL^p}$.
\end{itemize}
Note that in the second case $\|\cdot\|_{\tilde{\mathcal{X}}}$ is well defined from  \eqref{eq:SE0}.
\begin{theorem}\label{th:rate2sing}  
Under the above assumptions, there exist constants $c,C>0$ such that for any $N\in\mathbb{N}$,
\begin{align} \label{eq:concl}
   \sup_{t\in [0,T]}\EE\Big[  \| u_t^N-u \|_{\tilde{\mathcal{X}}}\Big]
&\leq  C \EE \Big[\|u^N_0- u_0 \|_{\tilde{\mathcal{X}}}\Big]   + c N^{-\varrho} ,
\end{align}
where
\begin{equation}\label{eq:rho}
\varrho = \min \left\{ \beta\Big(\lambda-\frac d p\Big)    \; ; \; \frac12\Big(1- 2 \beta\Big( d + \alpha -\frac {d} p + \delta \Big)\Big)    \right\} ,
\end{equation}
and  all the parameters have been introduced in Assumption \ref{assump}.
\end{theorem}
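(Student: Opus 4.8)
The plan is to derive a mild equation for the mollified empirical measure $u_t^N = V^N \ast \mu_t^N$, compare it term by term with the mild equation satisfied by $u$, and close a weakly singular Gronwall estimate in the norm $\|\cdot\|_{\tilde{\mathcal{X}}}$. First I would apply Itô's formula for the Lévy-driven dynamics \eqref{itoassS0} to the observable $x \mapsto V^N(x - X_t^{i,N})$, then sum over $i$ and normalize by $N$. Using that $\mathcal{L}$ is a translation-invariant Fourier multiplier (hence commutes with convolution by $V^N$), and the key identity
\[
\frac{1}{N}\sum_{k=1}^N K(V^N)(X_t^{i,N}-X_t^{k,N}) = \big(K(V^N)\ast\mu_t^N\big)(X_t^{i,N}) = K(u_t^N)(X_t^{i,N}),
\]
which holds because $K$ is a convolution operator (so $K(V^N)\ast\mu^N = K(V^N\ast\mu^N)$), I obtain that $u_t^N$ solves, in mild form,
\[
u_t^N = e^{t\mathcal{L}} u_0^N - \int_0^t \nabla e^{(t-s)\mathcal{L}}\big(V^N \ast (\mu_s^N \, \bF_M(K(u_s^N)))\big)\,\dd s + \mathcal{M}_t^N,
\]
where $\mathcal{M}_t^N = \int_0^t e^{(t-s)\mathcal{L}}\,\dd M_s^N$ is the stochastic convolution against the martingale part $M^N$ arising from the compensated Poisson random measures of the $L^i$.

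Second, since $M \geqslant M_T$, Remark \ref{rem1} allows me to write $u$ itself as a mild solution of the companion equation with the truncation $\bF_M$. Subtracting, I would decompose the drift difference as
\[
V^N \ast (\mu_s^N \bF_M(K(u_s^N))) - u_s \bF_M(K(u_s)) = \mathcal{R}_s^N + \big(u_s^N \bF_M(K(u_s^N)) - u_s\bF_M(K(u_s))\big),
\]
where $\mathcal{R}_s^N := V^N \ast (\mu_s^N \bF_M(K(u_s^N))) - u_s^N \bF_M(K(u_s^N))$ is a mollification commutator, and the remaining bracket splits further into $(u_s^N - u_s)\bF_M(K(u_s^N))$ and $u_s\big(\bF_M(K(u_s^N)) - \bF_M(K(u_s))\big)$.

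Third, I estimate each contribution in $\|\cdot\|_{\tilde{\mathcal{X}}}$ using the smoothing of the fractional semigroup. The bounds $\|\nabla e^{t\mathcal{L}} f\|_{\alpha,p} \lesssim t^{-\gamma}\|f\|_{\LL^p}$, with a time exponent $\gamma$ of the order $\tfrac{1+\alpha}{\sigma}$ which is $<1$ precisely thanks to $\alpha < \sigma-1$ in \eqref{eq:condalpha}, keep the time integral convergent. Using $\|\bF_M\|_{\LL^\infty}\leqslant M+1$, the Lipschitz bound $\|\bF_M'\|_{\LL^\infty}\leqslant 1$, the continuity of $K$ in \eqref{eq:dCK}, and the embedding \eqref{eq:SE1} (so $K(u)$ is bounded and H\"older), the two genuine difference terms are controlled by $\int_0^t (t-s)^{-\gamma}\EE[\|u_s^N-u_s\|_{\tilde{\mathcal{X}}}]\,\dd s$. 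The commutator $\mathcal{R}_s^N$ is bounded by pairing the concentration of $V^N$ at scale $N^{-\beta}$ against the H\"older modulus of $\bF_M(K(u_s^N))$, whose exponent is $\lambda - \tfrac dp$ by \eqref{eq:SE1}; this produces the rate $N^{-\beta(\lambda - d/p)}$, the first term in $\varrho$ of \eqref{eq:rho}.

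The hard part, and the origin of the second term in $\varrho$, is the stochastic convolution $\mathcal{M}_t^N$. I would bound $\EE[\|\mathcal{M}_t^N\|_{\tilde{\mathcal{X}}}]$ via a maximal / Burkholder-type inequality for Lévy-driven stochastic convolutions, estimating the $\sigma$-stable jump intensity in the Bessel norm. Each summand carries a mollified Dirac $V^N(\cdot - X^{i,N})$, whose $\HH_p^\alpha$-norm blows up like $N^{\beta(\alpha + d - d/p)}$ by the scaling $V^N(x)=N^{d\beta}V(N^\beta x)$; the independence of the $N$ Lévy processes yields an $N^{-1/2}$ gain, and the $\delta$-correction coming from the stable stochastic convolution (where $\delta > 1 - \tfrac\sigma2$, vanishing as $\sigma\to 2$) upgrades the exponent to $\beta(\alpha + d - d/p + \delta)$. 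This gives the rate $N^{-\frac12(1 - 2\beta(\alpha + d - d/p + \delta))}$, positive exactly under \eqref{eq:alpha}. Collecting everything yields a weakly singular Gronwall inequality
\[
\EE[\|u_t^N - u_t\|_{\tilde{\mathcal{X}}}] \leqslant C\,\EE[\|u_0^N - u_0\|_{\tilde{\mathcal{X}}}] + c\,N^{-\varrho} + C\int_0^t (t-s)^{-\gamma}\,\EE[\|u_s^N - u_s\|_{\tilde{\mathcal{X}}}]\,\dd s,
\]
which a singular Gronwall lemma closes. In \textbf{Case 2}, where \eqref{eq:dCK} only controls $\|K(u)\|_{\lambda,p}$ by $\|u\|_{\LL^1}+\|u\|_{\LL^p}$, the $\HH_p^\alpha$-estimate does not close on its own; I would instead propagate the distorted norm $\|u\|_{\tilde{\mathcal{X}}}=\|u\|_{\alpha,p}+\|K(u)\|_{\LL^p}$ by deriving a parallel mild estimate for $K(u_t^N)-K(u_t)$, using that $K$ commutes with $e^{t\mathcal{L}}$ and the mapping $\LL^p \to \dot W_p^1$ of \eqref{eq:dCK2}, so that the two quantities form a closed coupled Gronwall system. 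I expect the stochastic-convolution estimate, and in particular matching the blow-up rate of the mollified Dirac against the independence gain and the stable-noise exponent, to be the principal obstacle.
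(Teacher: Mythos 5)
Your proposal follows essentially the same route as the paper: the same mild-form decomposition into the mollification commutator (your $\mathcal{R}_s^N$ is exactly the paper's $E_t$ integrand), the two Lipschitz difference terms handled with the semigroup singularity $(t-s)^{-(1+\alpha)/\sigma}$, the rate $N^{-\beta(\lambda-d/p)}$ from the H\"older modulus via \eqref{eq:SE1}, the coupled estimate on $K(u_t^N-u_t)$ with the distorted norm in \textbf{Case 2}, and the martingale rate $N^{-\frac12(1-2\beta(\alpha+d-d/p+\delta))}$ with the $\delta$-loss coming from the stable semigroup (the paper generates it by shifting $1-\delta$ derivatives onto $e^{(t-s)\mathcal{L}}$, integrable in time precisely when $\delta>1-\frac{\sigma}{2}$). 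The only ingredients you leave implicit --- the uniform moment bound $\EE\big[\|u_t^N\|_{\alpha,p}^2\big]\leq C$ needed to take expectations in the commutator estimate, and the small/large-jump splitting with the Hardy--Littlewood maximal function behind the stochastic-convolution bound --- are exactly the paper's Lemmas \ref{estimation} and \ref{lemma martingale 2}, so your plan matches the paper's proof in both structure and quantitative content.
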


 The following corollary can be obtained   immediately. 

\begin{corollary}\label{cor}
If $u_0^N$ converges to $u_0$ in $\LL^1(\Omega \; ; \; \tilde{\mathcal{X}})$ as $N\to\infty$, then $u^N$ converges to $u$ in $\LL^\infty([0,T]\; ; \; \LL^1(\Omega \; ; \; \tilde{\mathcal{X}}))$ as $N\to\infty$.
\end{corollary}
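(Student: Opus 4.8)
The plan is to deduce Corollary \ref{cor} directly from the quantitative bound \eqref{eq:concl} in Theorem \ref{th:rate2sing}, treating the theorem as a black box. The corollary is essentially a statement that if the initial error vanishes in the right sense, then the error vanishes uniformly in time. First I would take expectations and suprema in the correct order. The natural norm on $\LL^\infty([0,T];\LL^1(\Omega;\tilde{\mathcal X}))$ of the process $u^N-u$ is $\sup_{t\in[0,T]}\EE[\|u_t^N-u_t\|_{\tilde{\mathcal X}}]$, which is precisely the left-hand side of \eqref{eq:concl}. So the whole task reduces to showing that the right-hand side of \eqref{eq:concl} tends to $0$ as $N\to\infty$.

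The right-hand side has two terms. The second term $cN^{-\varrho}$ manifestly vanishes as $N\to\infty$, since $\varrho>0$: indeed $\varrho$ is defined in \eqref{eq:rho} as a minimum of two quantities, the first being $\beta(\lambda-\frac dp)>0$ because $\lambda>\frac dp$ and $\beta>0$, and the second being $\frac12(1-2\beta(d+\alpha-\frac dp+\delta))$, which is strictly positive exactly because of the constraint \eqref{eq:alpha} imposed in point \emph{4.}~of Assumption \ref{assump}. Hence $\varrho>0$ and $N^{-\varrho}\to 0$. The first term $C\,\EE[\|u_0^N-u_0\|_{\tilde{\mathcal X}}]$ is, by definition, the $\LL^1(\Omega;\tilde{\mathcal X})$-norm of $u_0^N-u_0$, which tends to $0$ by the hypothesis of the corollary that $u_0^N\to u_0$ in $\LL^1(\Omega;\tilde{\mathcal X})$. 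Since $C$ is a fixed constant independent of $N$, the product also vanishes.

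Combining these two observations, I would conclude that the right-hand side of \eqref{eq:concl} converges to $0$ as $N\to\infty$, and therefore so does $\sup_{t\in[0,T]}\EE[\|u_t^N-u_t\|_{\tilde{\mathcal X}}]$, which is exactly the assertion that $u^N\to u$ in $\LL^\infty([0,T];\LL^1(\Omega;\tilde{\mathcal X}))$. The only point requiring a small amount of care is to make sure the double norm is correctly identified: the outer $\LL^\infty$ in time becomes the $\sup_{t\in[0,T]}$, the inner $\LL^1(\Omega)$ becomes the expectation $\EE[\,\cdot\,]$, and the values are measured in $\tilde{\mathcal X}$, so the quantity controlled by the theorem is literally the norm appearing in the corollary.

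Honestly, there is no real obstacle here, which is why the statement is phrased as a corollary and the paper remarks that it follows immediately. The entire content sits in Theorem \ref{th:rate2sing}; the corollary is a one-line consequence of $\varrho>0$ together with the convergence of the initial data. The only thing that could conceivably go wrong is a mismatch between the topology claimed in the corollary and the quantity actually bounded in \eqref{eq:concl}, but a direct unwinding of the definitions shows these coincide. Thus the proof is genuinely just the observation that both terms on the right-hand side of \eqref{eq:concl} go to zero.
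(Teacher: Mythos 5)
Your proposal is correct and coincides with the paper's intended argument: the paper offers no separate proof, declaring the corollary immediate from Theorem \ref{th:rate2sing}, and your unwinding --- identifying $\sup_{t\in[0,T]}\EE[\|u_t^N-u_t\|_{\tilde{\mathcal{X}}}]$ with the $\LL^\infty([0,T];\LL^1(\Omega;\tilde{\mathcal{X}}))$ norm, noting $\varrho>0$ thanks to $\lambda>\frac dp$ and \eqref{eq:alpha}, and invoking the hypothesis on the initial data --- is exactly that one-line consequence.
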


\begin{remark} We note that we do not assume that $u\ge 0$. However, from Corollary \ref{cor}, if $u_0^N$ converges to $u_0$ in  $\LL^1(\Omega \; ; \; \tilde{\mathcal{X}})$, then the convergence of
$u^N$ towards $u$ in the space $\LL^\infty([0,T]\; ; \; \LL^1(\Omega \; ; \; \tilde{\mathcal{X}}))$ implies that $u\ge 0$. \end{remark}

By some interpolation arguments, and under an additional assumption, we can obtain a convergence result for the original empirical measure $\mu_t^N$: 
\begin{corollary}\label{Colo}  
Let $q\geqslant 1$ be the conjugate of $p$ (\emph{i.e.~}$\frac1p+\frac1q=1$), and let us define, for some $\epsilon>0$ (to be chosen ahead): 
\begin{equation}\label{eq:defr}\vartheta_{\epsilon}:=\frac{(p-q)(q-\epsilon)}{ q(p-q+\epsilon)}<1\qquad \text{and} \qquad r_{\epsilon}:=\alpha(2\theta_{\epsilon} -1)<\alpha.\end{equation}We denote by $p_\varepsilon$ the conjugate of $q-\varepsilon$ (\emph{i.e.}~$\frac{1}{p_\varepsilon}+\frac{1}{q-\varepsilon}=1$) and we take $\epsilon$ small enough such that \[r_{\epsilon}>\frac{d}{p}> \frac{d}{p_\varepsilon} \quad \text{ and } \quad \epsilon < q-\frac{dq}{q\sigma-d}.\] 
We assume, in addition to Assumption \ref{assump}, that  $ K( e^{t\mathcal{L}}  u_{0} ) \in \HH_{p_{\epsilon}}^{r_{\epsilon}} $ for any $t\geq 0$,  and furthermore that \[u_0\in C([0,T] ; \HH_{q-\epsilon}^{-r_{\epsilon}}).\] 
Then, the unique solution $u$ to \eqref{eq:PDE} belongs to
 \[{C}([0,T] ; \cX)\cap C([0,T] ;  \HH_{q-\epsilon}^{-r_{\epsilon}}).\] 
 Consider, as before, the dynamics of the particle system ruled by (\ref{itoassS0}) with $\bF=\mathbf{F}_M$, for some $M \geqslant M_T(u)$.
Then,  there exists constants $C,c>0$ such that for any $N\in\mathbb{N}$,
\begin{align*}
   \sup_{t\in [0,T]}\EE \Big[\| \mu_t^N-u \|_{-\alpha,q}\Big]
&\leq C \EE \Big[\|u^N_0- u_0 \|_{\tilde{\mathcal{X}}}^{1 -\vartheta_{\epsilon}}  \Big] 
+ c N^{-\hat{\varrho} } ,
\end{align*}
where
\begin{align*}
\hat{\varrho} = \min \left\{  \beta\Big(\alpha-\frac d p\Big)    \; ;\; \varrho (1- \vartheta_{\epsilon})      \right\}, \qquad \varrho \text{ being given by \eqref{eq:rho}}.
\end{align*}
\end{corollary}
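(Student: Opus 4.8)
The plan is to reach $\mu_t^N$ from the solution $u$ by passing through the mollified measure $u_t^N=V^N\ast\mu_t^N$, for which Theorem \ref{th:rate2sing} already supplies a quantitative rate, and to pay only a cheap price for removing the mollification. Concretely, in the negative-order space I would split
\[
\|\mu_t^N-u\|_{-\alpha,q}\;\le\;\|\mu_t^N-u_t^N\|_{-\alpha,q}+\|u_t^N-u\|_{-\alpha,q},
\]
treating the first (mollification) term by a direct duality/scaling computation and the second (approximation) term by interpolating the strong control of Theorem \ref{th:rate2sing} against a uniform bound in a weaker space. The two rates thus produced, $\beta(\alpha-\tfrac dp)$ and $\varrho(1-\vartheta_\epsilon)$, are exactly the two entries of $\hat\varrho$.

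For the mollification term I would use the duality $\HH_q^{-\alpha}=(\HH_p^\alpha)^\ast$. For a test function $\phi$ with $\|\phi\|_{\alpha,p}\le 1$, since $\mu_t^N$ is a probability measure and $\int V^N=1$,
\[
\big|\langle \mu_t^N-u_t^N,\phi\rangle\big|=\Big|\int_{\RR^d}\!\!\int_{\RR^d}V^N(y)\big(\phi(x)-\phi(x+y)\big)\,\dd y\,\mu_t^N(\dd x)\Big|\le\|\phi-\check V^N\ast\phi\|_{\LL^\infty}.
\]
By the embedding \eqref{eq:SE1}, $\phi$ is H\"older continuous of exponent $\alpha-\tfrac dp$ with $[\phi]_{\alpha-d/p}\lesssim 1$, and the scaling $V^N(y)=N^{d\beta}V(N^\beta y)$ gives $\int V^N(y)\|y\|^{\alpha-d/p}\,\dd y=C_V\,N^{-\beta(\alpha-d/p)}$. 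Hence $\|\mu_t^N-u_t^N\|_{-\alpha,q}\lesssim N^{-\beta(\alpha-d/p)}$, uniformly in $t$ and $\omega$.

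For the approximation term I would interpolate. The exponents $\vartheta_\epsilon$ and $r_\epsilon$ are chosen precisely so that $\HH_q^{-\alpha}$ arises as a complex interpolation space between $\HH_{q-\epsilon}^{-r_\epsilon}$ and a Bessel space $\HH_p^{s_0}$ of integrability $p$: matching the integrability exponents forces the weight $1-\vartheta_\epsilon$ on the latter, and matching the smoothness then fixes $s_0=-\alpha(2\vartheta_\epsilon+1)\le\alpha$. Since $\HH_p^\alpha\hookrightarrow\HH_p^{s_0}$, the interpolation inequality yields, pathwise,
\[
\|u_t^N-u\|_{-\alpha,q}\lesssim\|u_t^N-u\|_{\alpha,p}^{\,1-\vartheta_\epsilon}\,\|u_t^N-u\|_{-r_\epsilon,q-\epsilon}^{\,\vartheta_\epsilon}.
\]
The weak factor is bounded uniformly: each $\delta_{X_t^{i,N}}$ lies in $\HH_{q-\epsilon}^{-r_\epsilon}$ with norm independent of the position (because $r_\epsilon>\tfrac{d}{p_\epsilon}$ makes $\HH_{p_\epsilon}^{r_\epsilon}$ embed into $\mathscr{C}_b^{\,r_\epsilon-d/p_\epsilon}$ by \eqref{eq:SE1}), so by convexity and the contraction property of convolution with the probability density $V^N$ one gets $\sup_{t,\omega}\|u_t^N\|_{-r_\epsilon,q-\epsilon}<\infty$; together with $u\in C([0,T];\HH_{q-\epsilon}^{-r_\epsilon})$ this bounds the weak factor by a deterministic constant. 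Raising Theorem \ref{th:rate2sing} to the power $1-\vartheta_\epsilon$ (using subadditivity of $x\mapsto x^{1-\vartheta_\epsilon}$ and $\EE[Y^{1-\vartheta_\epsilon}]\le(\EE Y)^{1-\vartheta_\epsilon}$ on the remainder) then produces the term $\EE[\|u_0^N-u_0\|_{\tilde{\mathcal{X}}}^{1-\vartheta_\epsilon}]$ and the rate $N^{-\varrho(1-\vartheta_\epsilon)}$. Taking the supremum over $t$ and adding the mollification bound gives the estimate with $\hat\varrho=\min\{\beta(\alpha-\tfrac dp),\varrho(1-\vartheta_\epsilon)\}$.

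The main obstacle is the preliminary claim that $u$ itself belongs to $C([0,T];\HH_{q-\epsilon}^{-r_\epsilon})$, which is where the extra hypotheses enter. I would read off the mild formulation \eqref{eq:mildKS}, estimate the free term $e^{t\cL}u_0$ via the assumption on $u_0$, and control the Duhamel term $\int_0^t\nabla e^{(t-s)\cL}(u_s\,K(u_s))\,\dd s$ in $\HH_{q-\epsilon}^{-r_\epsilon}$ by exploiting the smoothing of the $\sigma$-stable semigroup together with the hypothesis $K(e^{t\cL}u_0)\in\HH_{p_\epsilon}^{r_\epsilon}$; the technical constraint $\epsilon<q-\tfrac{dq}{q\sigma-d}$ is exactly what renders the time-singularity of that smoothing integrable, so a fixed-point/Gronwall argument closes in the weak space. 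A secondary point requiring care is the placement of the expectation: to keep the power $1-\vartheta_\epsilon$ \emph{inside} the expectation of the initial-data term, I would apply the interpolation inequality and the subadditivity of $x\mapsto x^{1-\vartheta_\epsilon}$ pathwise, before integrating, relying on the pathwise Gronwall structure underlying Theorem \ref{th:rate2sing}.
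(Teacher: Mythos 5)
Your proposal follows essentially the same route as the paper's proof: the same triangle splitting of $\|\mu_t^N-u\|_{-\alpha,q}$ through $u_t^N$, the same duality/\eqref{eq:SE1} H\"older-continuity estimate producing the rate $N^{-\beta(\alpha-d/p)}$ for the mollification term, the same pathwise interpolation inequality combined with the uniform $\HH_{q-\epsilon}^{-r_\epsilon}$-bound on the empirical objects and Theorem \ref{th:rate2sing} raised to the power $1-\vartheta_\epsilon$, and the same mild-formulation/Gronwall argument for the preliminary regularity $u\in C([0,T];\HH_{q-\epsilon}^{-r_\epsilon})$ (the paper organizes this in two steps, first proving $K(u)\in C([0,T];\HH_{p_\epsilon}^{r_\epsilon})$ and then closing the weak-space Gronwall via the product estimate $\|u_sK(u_s)\|_{-r_\epsilon,q-\epsilon}\leq\|u_s\|_{-r_\epsilon,q-\epsilon}\|K(u_s)\|_{r_\epsilon,p_\epsilon}$, which your sketch compresses but contains in substance). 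One caveat: your closing appeal to a ``pathwise Gronwall structure'' underlying Theorem \ref{th:rate2sing} is not available --- the martingale and commutator terms there are controlled only in expectation --- so the honest Jensen route yields $\big(\EE\|u_0^N-u_0\|_{\tilde{\mathcal{X}}}\big)^{1-\vartheta_\epsilon}$ rather than $\EE\big[\|u_0^N-u_0\|_{\tilde{\mathcal{X}}}^{1-\vartheta_\epsilon}\big]$; but the paper's own proof makes exactly the same leap at its final equality, so this is a shared (and harmless, since both quantities vanish under the same hypothesis) imprecision rather than a defect specific to your argument.
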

%
%
%
%
%
%
%
%



\subsection{Examples of applications} \label{sec:ex}

Let us now give some concrete illustrations which satisfy Assumption \ref{assump}. First of all, we fix  \[\cL:=-(-\Delta)^{\sigma/2} \quad  \text{ with } \sigma\in(1,2).\]
The first three examples belong to \textbf{Case 1}, while the last two  satisfy \textbf{Case 2}.
\begin{enumerate}
\item \textsc{$2d$ SQG equation}. Let $d=2$. 
If one takes
\[K(u):=R^{\bot} u\] where $R=(-R_2, R_{1})$ are the usual Riesz transforms (\emph{i.e.}, the Fourier
multipliers  $-\xi_{1}/|\xi|$, $-\xi_{2}/|\xi|$), then  \eqref{eq:PDE} corresponds to the
well-known \emph{quasi-geostrophic SQG equation}, which arises from the geostrophic
study of strongly rotating flows, and  has been introduced to the mathematical community by Constantin, Majda, and Tabak \cite{Const},
 who noted structural similarity with the $3d$ incompressible Euler equation in the form of vortex stretching. 
Many results have been shown since then; we briefly
mention a few  of them. Local existence and uniqueness of 
classical solutions  in H\"older and Sobolev
spaces have been obtained by \cite{Chae,Const}. Global existence  
 of weak solutions is known in the spaces $\LL^{p}(\mathbb{R}^{2})$, for $p\in (\frac{4}{3}, \infty)$, see \cite{March, Res}.  For the subcritical case, \textit{i.e.}~$\sigma> 1$ which is the case under investigation here, and also the critical case $\sigma=1$, classical solutions are known to be globally
smooth  \cite{Const2,Kise}, and in fact, weak solutions are smooth for positive times \cite{Cafa,Const3}.
 For even more results, also in the  supercritical case, we refer to
 \cite{Cord2, Cord, Ferre2,  March, Kise, Wu,Wu2}.

  We recall that   the singular integral theory of Calderon and Zygmund gives the following property: for any $p\in (1,\infty)$, there
	is a constant $C_p>0$, such that, for any $f \in \HH_p^\alpha$
		\[
	\| R^{\perp} f\|_{\alpha,p}\leq C_p\| f\|_{\alpha,p}. 
	\]
	therefore  $K$ verifies \textbf{Case 1} with $\alpha=\lambda >\frac{2}{p}$.

\item \textsc{Turbulence model.} Let $d=2$  and now take \[K(u):=(-\Delta)^{\eta/2} R^{\bot} u
\] 
 for some general $\eta$. The case $\eta=0$ reduces to the previous SQG equation. In the general case, the scalar equation 
 \eqref{eq:PDE} is also called   \emph{$\eta$-turbulence model} 
 \cite{Iwa}. Following  \cite{Miaoa2}, the cases  $\sigma> \eta +1$,
 $\sigma=\eta +1$ and $\sigma < \eta +1$ correspond respectively to the subcritical, critical and supercritical cases.  This model has been introduced in \cite{Const4}, where Constantin et al.~considered first the cases  $\eta\in (-1,0)$ and $\sigma=\eta+1$, and obtained global regularity of  weak solutions.  Recently, some interesting  generalization of this equation has been exposed in \cite{Ferre}. 
 Further developments on regularity issues have covered extended cases, as for instance in  \cite{Miaoa, Kise}. 
Here, similarly as in the previous example, our assumption covers the subcritical case. In that regards, when $\eta\in(0,1)$ and $\sigma \in (\eta+1,2]$ the authors in  \cite{Miaoa2} obtain the global well-posedness of the system with smooth initial data. 

 Here we observe that, for any $\eta>0$, and any $f\in\HH_p^\alpha$,
\[
\| (-\Delta)^{\eta/2} R^{\bot} f\|_{\alpha -\eta , p}\leq  C_\eta \|  R^{\bot} f\|_{\alpha, p} \leq C_\eta C_{p} \|  f\|_{\alpha , p}
\]
therefore  $K$ satisfies \textbf{Case 1} whenever $\eta$ is chosen such that $ \eta \in (0,\alpha-\frac 2 p ) \subset (0,1)$, and taking $\lambda = \alpha-\eta$.

\item \textsc{Burgers equation.}  When $d=1$, the case  $K(u):= u$  corresponds to the more
well-known  \emph{scalar  fractal Burgers equation}.  Equations of this type appear in the study of growing interfaces in the presence of self-similar hopping surface diffusion, see \cite{Mann}.  The Burgers equation with pure fractal diffusion  has been studied by numerous  authors, see for instance (without being exhaustive)
\cite{Biler3F,Biler4,Biler5, Kise2}. 

It is straightforward to check that this model satisfies \textbf{Case 1} with $\alpha=\lambda>\frac 1 p$. 

\item  \textsc{Keller-Segel models.} Let $d\geqslant 1$ be any dimension. Assume now that $K$ reads as \[K(u):=  k\ast u \qquad \text{where} \qquad k(x):= - c_{d} \frac{x}{|x|^{d}}, \quad x \in \RR^d,\  c_{d}>0.\]
This case  corresponds to the   \emph{fractional parabolic-elliptic  Keller-Segel models}, which have  been studied for instance in \cite{Biler2,Biler3,Escudero,Sule}. This system can describe directed movement of cells in response to the gradient of a chemical, but also several physical processes involving diffusion and interaction of particles.

We observe that $\nabla k$ is the Calderon-Zygmund operator and therefore
we have, for any $p \in (1,+\infty)$, \[ |k\ast u |_{1,p}= \| \nabla (k\ast u)\|_{\LL^p}\leq  C_p \|  u\|_{\LL^p}\] and, for any $p>d$, 
\begin{align*}
\| k\ast u\|_{1,p}\leq C(\| u \|_{\LL^1} +\| u \|_{\LL^p} ).\end{align*} Therefore,  since $\frac dp<\alpha<1$ we can write from the above the two conditions
\begin{align*}
 \| k\ast u\|_{\alpha,p} &\leq  \| k\ast u\|_{1,p} \leq  C(\| u \|_{\LL^1} +\| u \|_{\LL^p} )
\\ 
| k\ast u|_{1,p}&\leq C\|u\|_{\LL^p}. \vphantom{\Big(}
\end{align*}
Hence, this model satisfies \textbf{Case 2} with $\lambda=\alpha$.

\item \textsc{$2d$ Navier-Stokes equation.} Let $d=2$, and let us recall the following  \emph{generalized Navier-Stokes equation} which  describes the evolution of the velocity field
$u:\RR_+ \times \RR^2 \to \RR^2$ of an incompressible fluid with kinematic
viscosity coefficient $\kappa >0$: for any $(t,x) \in \RR_+ \times \RR^2 $,
\begin{equation}\label{Navier}
 \left \{
\begin{aligned}
    \partial_t u(t, x) &=-\kappa  (-\Delta)^{\sigma/2} u(t, x) - \big(u(t, x) \sca \nabla\big) u(t, x)  - \nabla p(t, x)
    \\\text{ div } u(t, x)&=0  \vphantom{ u^{\rm ini}}
\end{aligned}
\right .
\end{equation} with some initial data $u_0$ and 
where $\sca$ denotes the standard euclidean product in $\RR^d$.
It turns out that the associated (scalar) \textit{vorticity field} $ \xi= \partial_{1}u_{2} - \partial_{2}u_{1}:  \RR^2 \to \RR $ 
 satisfies a simple equation written as
\begin{equation}\label{Vorty}
\partial_{t} \xi  + u \sca \nabla \xi=-\kappa (-\Delta)^{\sigma/2} \xi, \qquad (t,x)\in \RR_+\times \mathbb{R}^{2}.
\end{equation}
The velocity field $u(t, x)$ can be reconstructed from the vorticity distribution $\xi(t, x)$ by the
convolution with the \textit{Biot-Savart kernel} $k$ as:
\begin{equation}
u(t,x)= \big(k \ast \xi(t,\cdot)\big)(x)= \frac{1}{2\pi} \int_{\mathbb{R}^{2}} \frac{(x-y)^{\bot}}{|x-y|^{2}} \ \xi(t,y) \dd y 
\end{equation}
where $(x_{1},x_{2})^{\bot}:=(-x_{2}, x_{1})$. Therefore, $\xi$ is solution to an equation of type \eqref{eq:PDE} with $K(\xi):=k\ast \xi$. This model  arises
naturally in fluid mechanics, see for instance  \cite{WuNS,Chae2,Ferre3,Lin}.
It is known that, when $p>2$, and $ \frac 2 p < \alpha < 1$,  
\begin{align*}
 \| k\ast \xi\|_{\alpha,p}& \leq  \| k\ast \xi\|_{1,p} \leq  C(\| \xi \|_{\LL^1} +\| \xi \|_{\LL^p} )
\\
| k\ast \xi|_{1,p}&\leq C \| \xi\|_{\LL^p}.
\end{align*} Therefore, this model satisfies \textbf{Case 2} with $\lambda=\alpha$.

\end{enumerate}

\section{Preliminaries}
\label{sec:prelim}

In this preparatory section, we present some classical definitions and related results on the symmetric stable Lévy processes, and we also provide some important estimates and identities which
will be used several times later on.

\subsection{Symmetric stable L\'{e}vy processes} \label{ssec:levy}

 We refer to  \cite{App, Kunita, Sato} for all the material presented here.

\begin{definition}[\textbf{L\'{e}vy process}] \label{appendix: def levy process} \textit{A process $L=(L_t)_{t \geq 0}$ with values in $\RR^d$ defined on a probability space $(\Omega, \fF, \PP)$ is a \emph{L\'{e}vy process} if the following conditions are fulfilled:
\begin{itemize}
\item[1.] $L$ starts at 0, $\PP-$a.s.: $\PP(L_0=0)=1;$ 
\item[2.] $L$ has independent increments: for $k \in \NN$ and $0 \leq t_0 < \dots < t_k$, 
\begin{align*}
L_{t_1} - L_{t_0},  \dots, L_{t_k} - L_{t_{k-1}} \quad \text{are independent };
\end{align*}
\item[3.] $L$ has stationary increments: if $ s \leq t$, then $L_{t} - L_s $ has the same distribution as $ L_{t-s}\; ;$
\item[4.] $L$ is stochastically continuous: for all $t \geq 0$ and $\varepsilon>0$, $
\displaystyle \lim_{s \rightarrow t} \PP (|L_t - L_s| > \varepsilon) =0.
$
\end{itemize}}
\end{definition}



Furthermore, we assume in this paper that  $(L_t)_{t\geq 0}$ is a \emph{symmetric $\sigma$-stable} Lévy process for some $\sigma \in (1,2)$. These processes can be completely defined via their characteristic function, which is given by (see \cite{Sato} for instance)
\[
\EE\big[ e^{i  \xi \sca L_t}\big]= e^{-t\psi(\xi)}, \qquad \xi\in\RR^d, t\geq 0
\]
where 
\begin{equation}\label{eq:psi}
\psi(\xi):= \int_{\RR^{d}} \big(1-e^{i (\xi \sca z)}+ i (\xi \sca z) {\bf 1}_{\{|z|\leq1\}}\big) \; \dd\nu(z), \qquad \xi \in \RR^d
\end{equation}
and the L\'{e}vy measure $\nu$  reads as
\[
\nu(U):=\int_{\mathbb{S}^{d-1}} \int_{0}^{\infty} \frac{{\bf 1}_{U}(r\theta)}{r^{\sigma+d}} \; \dd r \; \dd \mu(\theta) , \qquad U \in \mathcal{B}(\RR^{d}-\{0\}),
\]
where $\mathcal{B}(X)$ denotes the Borel sets of $X$, and $\mu$ is some symmetric finite measure concentrated on the unit sphere $\mathbb{S}^{d-1}$, called \textit{spectral measure}
of the stable process $(L_{t})_{t\geq 0}$.  
We also assume the following additional property: for some  $C_{\sigma}> 0$, 
\begin{equation} 
\psi(\xi)\geq C_{\sigma} |\xi|^{\sigma}, \qquad \text{ for any }\; \xi\in \RR^{d}.
\end{equation}
We remark that the above condition is equivalent to the fact that the support of the spectral measure $\mu$ is not contained in the 
proper linear subspace of $\RR^{d}$, see \cite{Priola}.

The \textit{L\'{e}vy-It\^{o} decomposition Theorem} (\cite[Section 2.4]{App} and  \cite[Chapter 4]{Sato}) implies that $L_t$ can the be written as
\begin{equation}\label{levy}
L_t= \int_0^t \int_{|z|>1} z\; \mathcal{N}(\dd s\dd z) + \int_0^t  \int_{0 < |z| < 1}  z\; \tilde{\mathcal{N}}(\dd s\dd z),
\end{equation}
where the  Poisson  random measure $\mathcal{N}$ is defined by
\begin{equation}\label{eq:poisson}
\mathcal{N}((0,t]\times U)=\sum_{s\in (0,t]} {\bf 1}_{U} (L_s-L_{s_{-}}), \qquad  U \in \mathcal{B}(\RR^{d}-\{0\}), \; t>0,
\end{equation}
and finally the compensated Poisson  random measure is given by  \begin{equation}\label{eq:tildeN}\tilde{\mathcal N}((0,t]\times U)= \mathcal{N}((0,t]\times U)- t \nu(U).\end{equation} 
We recall the following well-known properties about the symmetric $\sigma$-stable Lévy processes (see for instance \cite[Proposition 2.5]{Sato} and 
\cite[Section 3]{Priola}).


\begin{proposition} Let $t>0$ be fixed and let $\mu_t$ be the probability law of the symmetric $\sigma$-stable process $L_t$. The following properties hold:
\begin{enumerate}
\item  \emph{(Scaling property)}.    For any $\lambda>0$,  $L_t$ and $\lambda^{-\frac{1}{\sigma}}L_{\lambda t}$ have the same finite dimensional law. 
 In particular, for any $t>0$ and $A\in \mathcal{B}(\RR^{d})$,  $\mu_t(A)=\mu_1(t^{-\frac{1}{\sigma}}A)$.

\item \emph{(Existence of smooth density)}.  For any $t>0$, $\mu_t$ has a smooth density $\rho_t$ with respect to the Lebesgue measure,  which is given by
\[
\rho_t(x)=\frac{1}{(2\pi)^{d}}  \int e^{-i ( x \sca \xi)} \ e^{-t\psi(\xi)}   \  \dd\xi, \qquad x \in \RR^d.
\]
Moreover $\rho_{t}(x)=\rho_{t}(-x)$ and for any $k\in \mathbb{N}$,  $\nabla^k \rho_t \in \LL^{1}(\RR^{d})$. 
\item  \emph{(Moments)}. For any $t> 0$,  $\mathbb{E}\big[|L_t|^{\beta}\big]< \infty$ if and only if $\beta < \sigma$.
\end{enumerate}
Besides, the infinitesimal generator of the process $(L_t)_{t\geqslant 0}$ is given by $\mathcal{L}$ defined as follows: for any $\phi \in C_0^\infty(\RR^d)$, 
\begin{equation} \mathcal{L}\phi(x) = \int_{\RR^{d}-\left\{  0 \right\}}  \big(\phi(x+ z)-\phi(x) -{\bf 1}_{\{|z|\leq 1\}} \nabla\phi(x)\sca z\big) \;  \dd\nu(z). \label{OpeL}\end{equation}

\end{proposition}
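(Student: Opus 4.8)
The plan is to deduce all four statements from the explicit characteristic function $\EE[e^{i\xi\cdot L_t}]=e^{-t\psi(\xi)}$ together with the structural properties of the symbol $\psi$ in \eqref{eq:psi}. The preliminary step is to record that, for a \emph{symmetric} $\sigma$-stable process, $\psi$ is real, even, nonnegative and homogeneous of degree $\sigma$. Symmetry of the spectral measure $\mu$ forces the compensator term $i(\xi\cdot z)\mathbf 1_{\{|z|\le1\}}$ to integrate to zero and cancels the imaginary part of $1-e^{i(\xi\cdot z)}$ upon pairing $z$ with $-z$, so that
\[ \psi(\xi)=\int_{\RR^d}\big(1-\cos(\xi\cdot z)\big)\,\dd\nu(z)\ge 0,\qquad \psi(-\xi)=\psi(\xi). \]
The homogeneity $\psi(\lambda\xi)=\lambda^\sigma\psi(\xi)$ for $\lambda>0$ then follows from the scaling invariance of the L\'evy measure, $\nu(\lambda A)=\lambda^{-\sigma}\nu(A)$, which is built into its polar form: pushing this scaling through $\int(1-\cos(\lambda\xi\cdot z))\,\dd\nu(z)$ produces exactly the factor $\lambda^\sigma$.

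With homogeneity in hand, the \emph{scaling property} is immediate at the level of one-dimensional marginals: for $\lambda>0$,
\[ \EE\big[e^{i\xi\cdot(\lambda^{-1/\sigma}L_{\lambda t})}\big]=e^{-\lambda t\,\psi(\lambda^{-1/\sigma}\xi)}=e^{-\lambda t\,\lambda^{-1}\psi(\xi)}=e^{-t\psi(\xi)}=\EE\big[e^{i\xi\cdot L_t}\big], \]
so $L_t$ and $\lambda^{-1/\sigma}L_{\lambda t}$ share the same law. The extension to all finite-dimensional distributions is obtained because both processes have independent and stationary increments, so the joint law is determined by the common marginal law of a single increment. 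Choosing $\lambda=1/t$ and reading off the distribution gives $\mu_t(A)=\mu_1(t^{-1/\sigma}A)$.

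For the \emph{smooth density}, the crucial input is the non-degeneracy assumption $\psi(\xi)\ge C_\sigma|\xi|^\sigma$, which yields $|e^{-t\psi(\xi)}|\le e^{-tC_\sigma|\xi|^\sigma}$ and hence $\xi\mapsto \xi^\gamma e^{-t\psi(\xi)}\in\LL^1(\RR^d)$ for every multi-index $\gamma$. Fourier inversion then produces a bounded continuous density $\rho_t$, and differentiation under the integral sign (justified by the above integrability) shows $\rho_t\in C^\infty$, with each partial derivative $\partial^\gamma\rho_t$ given as the inverse transform of $(i\xi)^\gamma e^{-t\psi(\xi)}$; evenness of $\rho_t$ is inherited from that of $\psi$. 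The remaining assertion $\nabla^k\rho_t\in\LL^1$ is the delicate point, since boundedness does not imply integrability. The natural route is to reduce to $t=1$ via the scaling identity $\rho_t(x)=t^{-d/\sigma}\rho_1(t^{-1/\sigma}x)$ and then to prove that $\rho_1$ and each derivative decay at infinity at least like $|x|^{-(d+\sigma)}$, by repeated integration by parts in the oscillatory Fourier integral, exploiting the smoothness and homogeneity of $\psi$ away from the origin; such decay is integrable in $\RR^d$.

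Finally, for the \emph{moments}, I would invoke the classical criterion relating moments of an infinitely divisible law to the tail of its L\'evy measure, namely $\EE[|L_t|^\beta]<\infty$ if and only if $\int_{|z|>1}|z|^\beta\,\dd\nu(z)<\infty$; the polar structure of $\nu$ turns the latter into a radial integral $\int_1^\infty r^{\beta-\sigma-1}\,\dd r$ (up to the finite mass of $\mu$), which converges precisely when $\beta<\sigma$, consistently with the tail asymptotics $\rho_1(x)\sim c|x|^{-(d+\sigma)}$. The generator formula \eqref{OpeL} is the infinitesimal version of the L\'evy--Khintchine exponent: either by applying It\^{o}'s formula for jump processes to $\phi(L_t)$ using the decomposition \eqref{levy}, or by differentiating $e^{t\cL}\phi=\mathcal F^{-1}(e^{-t\psi}\mathcal F\phi)$ at $t=0$, one identifies $\cL$ as the Fourier multiplier $-\psi(\xi)$, which is exactly the nonlocal operator in \eqref{OpeL}. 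I expect the main obstacle to be the integrability $\nabla^k\rho_t\in\LL^1$: existence and smoothness of the density are soft consequences of Fourier inversion, whereas controlling the $\LL^1$ norm of its derivatives requires the sharp pointwise decay estimates above, which is the only genuinely quantitative step and cannot be read off from the characteristic function by inspection.
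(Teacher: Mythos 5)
The paper offers no proof of this proposition (it is recalled from \cite{Sato} and \cite{Priola}), so your sketch stands on its own. Most of it is correct and standard: the symmetrization argument for $\psi$, the scaling of the marginals and its extension to finite-dimensional laws via stationary independent increments, the Fourier-inversion construction of the smooth even density from the non-degeneracy bound $\psi(\xi)\geq C_\sigma|\xi|^\sigma$, the moment criterion via the tail of the L\'evy measure, and the generator identification. However, there is a genuine gap at exactly the step you yourself flag as the delicate one, namely $\nabla^k\rho_t\in\LL^1$. Your route presupposes that $\psi$ is smooth away from the origin and that $\rho_1$ and its derivatives decay like $|x|^{-(d+\sigma)}$ uniformly in direction; neither holds in the generality of the statement. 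The polar form of $\nu$ gives $\psi(\xi)=c_\sigma\int_{\mathbb{S}^{d-1}}|\xi\cdot\theta|^\sigma\,\dd\mu(\theta)$, and the standing assumptions allow $\mu$ to be any symmetric finite measure whose support spans $\RR^d$ --- in particular a purely atomic one. Since $u\mapsto|u|^\sigma$ with $\sigma\in(1,2)$ is only $C^1$ with $(\sigma-1)$-H\"older derivative at $0$, the symbol $\psi$ is then not $C^2$ across the hyperplanes $\xi\cdot\theta=0$, and repeated integration by parts in the oscillatory integral is unavailable. Correspondingly the decay claim is false: for $d=2$ and $\mu=\tfrac12(\delta_{e_1}+\delta_{-e_1}+\delta_{e_2}+\delta_{-e_2})$ the coordinates of $L_t$ are independent one-dimensional stable processes, and along the axis one has $\rho_1(R,0)\sim c\,R^{-1-\sigma}$, far slower than the claimed $R^{-(2+\sigma)}$. (The bound $|x|^{-(d+\sigma)}$ is the Blumenthal--Getoor estimate, valid in the rotationally invariant case $\cL=-(-\Delta)^{\sigma/2}$, not for general spectral measures; the same caveat applies to your aside on the tail asymptotics in the moment step.)

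The standard repair --- essentially the argument behind the citation \cite{Priola} --- avoids pointwise tail estimates altogether. Split $\nu=\nu\,\mathbf 1_{\{|z|\leq1\}}+\nu\,\mathbf 1_{\{|z|>1\}}$ and decompose $L_t=Y_t+Z_t$ into independent parts, with $Z$ compound Poisson. Then $\rho_t=q_t\ast\mathrm{Law}(Z_t)$, where $q_t$ is the density of $Y_t$, so Young's inequality gives $\|\nabla^k\rho_t\|_{\LL^1}\leq\|\nabla^k q_t\|_{\LL^1}$. The truncated symbol $\psi_0(\xi)=\int_{|z|\leq1}\big(1-\cos(\xi\cdot z)\big)\,\dd\nu(z)$ differs from $\psi$ by at most $2\nu(\{|z|>1\})<\infty$, hence still satisfies $\psi_0(\xi)\geq C_\sigma|\xi|^\sigma-C$, and --- unlike $\psi$ --- it is genuinely $C^\infty$ with polynomially bounded derivatives, because $\int_{|z|\leq1}|z|^n\,\dd\nu(z)<\infty$ for every $n\geq2$. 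Consequently $\partial_\xi^\eta\big[(i\xi)^\gamma e^{-t\psi_0(\xi)}\big]\in\LL^1(\RR^d)$ for all multi-indices $\eta,\gamma$, so $x^\eta\nabla^k q_t$ is bounded for every $\eta$, i.e.\ $\nabla^k q_t$ decays faster than any polynomial and in particular lies in $\LL^1$. This closes the one step of your proof that does not survive the generality of the proposition; the rest needs no change.
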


For instance, the  fractional Laplacian $-(-\Delta)^{\frac{\sigma}{2}}$ satisfies \eqref{OpeL} with 
\[ \dd\nu(z) = \frac{\kappa_\sigma \; \dd z}{|z|^{1+\sigma}}, \qquad \text{for some } \kappa_\sigma>0.\]

\subsection{Maximal function and semi-group}
\label{sec:max} 

We will use in the proofs two analytic tools which we expose in this paragraph: the first one is the so-called \emph{maximal function}, and the second one is an estimate of the operator norm of some semi-groups. 

First of all, let $f$ be a locally integrable function on $\RR^{d}$.  The \emph{Hardy-Littlewood maximal function} $\mathbb{M}f$ is
defined by
\[
\mathbb{M}f(x):=\sup_{0< r< \infty} \bigg\{\frac{1}{|\mathscr{B}_r|} \int_{\mathscr{B}_r} f(x+y) \ \dd y\bigg\}, \qquad x \in\RR^d,
\]
where $\mathscr{B}_r=\left\{ x\in\RR^{d} \ : |x|< r   \right\}$. The following results can be found in \cite{Stein}.
\begin{lemma} \label{maxi}For any $f\in \HH_{1}^1$, there exists a constant $C_d>0$ and a Lebesgue-zero set $E\subset \RR^d$ such that, for any  $x,y \in \RR^d - E$,
\[
|f(x)-f(y)|\leq C_d \; |x-y|\; \Big(\mathbb{M}|\nabla f|(x)   +  \mathbb{M}|\nabla f|(y)\Big).\]
Moreover, for all $p>1$   there exists a constant  $C_{d,p}>0$ such that for any  $f\in \LL^{p}(\RR^{d})$
\[
 \|\mathbb{M} f\|_{\LL^{p}} \leq C_{d,p}  \;   \| f\|_{\LL^{p}}.
\]
\end{lemma}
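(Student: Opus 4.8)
The statement bundles together two classical and essentially independent facts, so my plan is to prove them separately. The second inequality---the $\LL^p$-boundedness of the Hardy--Littlewood maximal operator $\mathbb{M}$---is the more standard and easier of the two and would follow a weak-type-plus-interpolation route; the first---the pointwise control of the oscillation of $f$ by the maximal function of $\nabla f$---is more delicate and would rest on a potential-type integral representation combined with a dyadic summation. Both are textbook results (as the cited reference \cite{Stein} indicates), but here is how I would carry them out.

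For the maximal inequality I would first establish the \emph{weak $(1,1)$ bound}
\[
\big|\{x\in\RR^d : \mathbb{M}f(x) > \la\}\big| \leq \frac{C_d}{\la}\,\|f\|_{\LL^1}, \qquad \la>0,
\]
by a Vitali-type covering argument: to each point of the level set one associates a ball on which the average of $|f|$ exceeds $\la$, extracts a countable pairwise-disjoint subfamily that captures a fixed fraction of the level set, and sums the $\LL^1$-mass over those balls. Since $\mathbb{M}$ is trivially bounded on $\LL^\infty$ with operator norm $1$, the Marcinkiewicz interpolation theorem then upgrades these two endpoint estimates to the strong $(p,p)$ bound $\|\mathbb{M}f\|_{\LL^p}\leq C_{d,p}\|f\|_{\LL^p}$ for every $p\in(1,\infty)$. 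The constant $C_{d,p}$ degenerates as $p\downarrow 1$, which is exactly why the endpoint $p=1$ is excluded.

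For the pointwise estimate my plan is to reduce first to smooth $f$ by density of $C_0^\infty(\RR^d)$ in $\HH_1^1$, the exceptional null set $E$ being the complement of the common Lebesgue points of $f$ and $\nabla f$. Fixing such $x,y$ and writing $r=|x-y|$, I would take the ball $B:=B\big(\tfrac{x+y}{2},r\big)$, which contains both points and is contained in $B(x,2r)$. Denoting by $f_B$ the average of $f$ over $B$ and using the triangle inequality, it suffices to bound $|f(x)-f_B|$ and $|f(y)-f_B|$ separately. For the first I would invoke the standard representation, valid for $x\in B$,
\[
|f(x)-f_B| \leq C_d \int_B \frac{|\nabla f(z)|}{|x-z|^{d-1}}\,\dd z,
\]
obtained by writing $f(x)-f(z)=-\int_0^1 \nabla f(x+t(z-x))\cdot(z-x)\,\dd t$, averaging over $z\in B$, and changing variables. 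The remaining task is to convert this truncated Riesz potential of order one into the maximal function: decomposing $B(x,2r)$ into dyadic annuli $A_k=\{2^{-k-1}(2r)\leq |x-z|<2^{-k}(2r)\}$, on which $|x-z|^{-(d-1)}$ is comparable to $(2^{-k}r)^{-(d-1)}$, and using $\int_{A_k}|\nabla f|\leq |B(x,2^{-k}\cdot 2r)|\,\mathbb{M}|\nabla f|(x)$, the resulting geometric series sums to
\[
|f(x)-f_B| \leq C_d\, r\, \mathbb{M}|\nabla f|(x) = C_d\,|x-y|\,\mathbb{M}|\nabla f|(x).
\]
The symmetric bound for $|f(y)-f_B|$ together with the triangle inequality then yields the claimed inequality.

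The only genuinely delicate point is the first part: one must guarantee that the representation and the final estimate hold off a \emph{single} Lebesgue-null set valid simultaneously for all admissible pairs $(x,y)$, which is handled by the density reduction and the Lebesgue differentiation theorem, and one must perform the dyadic summation so that the power of $r$ comes out exactly linear. By contrast the maximal inequality is entirely routine, its only subtlety being the blow-up of the constant at $p=1$. Since both statements are classical, I would ultimately just cite them as the authors do, but the arguments above are the standard proofs.
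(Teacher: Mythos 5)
The paper offers no proof of this lemma at all---it simply cites \cite{Stein}---and your argument reproduces exactly the standard proofs found there: the weak $(1,1)$ bound via a Vitali covering argument combined with Marcinkiewicz interpolation against the trivial $\LL^\infty$ bound for the maximal inequality, and the truncated Riesz-potential representation of $|f(x)-f_B|$ summed over dyadic annuli (together with a density/Lebesgue-point argument to obtain a single null set $E$) for the pointwise oscillation estimate. Both sketches are correct, including your handling of the two genuinely delicate points, namely the blow-up of $C_{d,p}$ as $p\downarrow 1$ and the passage from smooth $f$ to general $f\in\HH_{1}^{1}$.
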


Next, recall the definition of $\mathcal{L}$ given by  \eqref{OpeL} The family of operators $\{e^{t\mathcal{L}}\}_{t\geqslant 0}$ 
defines a Markov semi-group on each space $ \HH_{p}^{\alpha}$ ; with little abuse of notation, we write $e^{t\mathcal{L}}$
for any value of $\alpha$.

Let us consider  the operator $A:D(  A)  \subset
\LL^{p}\rightarrow \LL^{p}  $ defined as $Af=\Delta f$. Then, for any $\kappa\geq 0 $,  the  fractional powers $(
I-A)^{\kappa/2}$ are well defined  and
$\Vert (  I-A)^{\kappa/2}f\Vert _{\LL^p}$ is equivalent to the norm $\|f\|_{\kappa,p}$ of $\HH_{p}^{\kappa}  $. We have the following:

\begin{proposition} \label{prop} Assume $p\in(1,+\infty)$. For any
$\kappa\geq 0$, $\sigma \in (1,2)$, and given $T>0$, there is a constant $C_{\kappa,\sigma,T,p}>0$ such that, 
for any $t\in(0,T]$, 
\begin{equation}\label{s1}
\left\Vert \left(  I-A\right)^{\kappa}e^{t\mathcal{L}}\right\Vert _{\LL^{p}\rightarrow
\LL^{p}}\leq\frac{C_{\kappa,\sigma, T,p}}{t^{2 \kappa/\sigma}}.%
\end{equation}
\end{proposition}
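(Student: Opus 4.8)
The plan is to realize $(I-A)^{\kappa}e^{t\cL}$ as a convolution operator and to bound its $\LL^{p}\to\LL^{p}$ norm by the $\LL^{1}$-norm of its kernel. First I would use that $\cL$ generates the symmetric $\sigma$-stable process, so $e^{t\cL}$ acts by convolution with the density $\rho_t$ whose Fourier transform is $e^{-t\psi}$, while $A=\Delta$ has symbol $-|\xi|^{2}$; hence $(I-A)^{\kappa}e^{t\cL}$ is convolution with the kernel $G_t:=\mathcal{F}^{-1}\big((1+|\cdot|^{2})^{\kappa}e^{-t\psi}\big)$. By Young's inequality, $\big\|(I-A)^{\kappa}e^{t\cL}\big\|_{\LL^{p}\to\LL^{p}}\leq\|G_t\|_{\LL^{1}}$ for every $p\in(1,\infty)$, so the whole statement reduces to proving $\|G_t\|_{\LL^{1}}\leq C\,t^{-2\kappa/\sigma}$ on $(0,T]$.

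The main tool for this kernel estimate is the scaling property, which is equivalent to the homogeneity $\psi(\lambda\xi)=\lambda^{\sigma}\psi(\xi)$. Substituting $\xi=t^{-1/\sigma}\eta$ in the Fourier integral defining $G_t$, and using both $t\,\psi(t^{-1/\sigma}\eta)=\psi(\eta)$ and $(1+|\xi|^{2})^{\kappa}=t^{-2\kappa/\sigma}(t^{2/\sigma}+|\eta|^{2})^{\kappa}$, a change of variables in the resulting spatial integral yields the exact identity $\|G_t\|_{\LL^{1}}=t^{-2\kappa/\sigma}\,\big\|\mathcal{F}^{-1}\big((t^{2/\sigma}+|\cdot|^{2})^{\kappa}e^{-\psi}\big)\big\|_{\LL^{1}}$. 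Writing $\epsilon:=t^{2/\sigma}\in(0,T^{2/\sigma}]$ and $\Phi_{\epsilon}:=\mathcal{F}^{-1}\big((\epsilon+|\cdot|^{2})^{\kappa}e^{-\psi}\big)$, the desired bound therefore follows at once from the \emph{uniform} estimate $\sup_{0<\epsilon\leq T^{2/\sigma}}\|\Phi_{\epsilon}\|_{\LL^{1}}<\infty$, the constant in \eqref{s1} being exactly this supremum. The case $\kappa=0$ is trivial, since then $G_t=\rho_t$ and $\|\rho_t\|_{\LL^{1}}=1$.

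The hard part, and the step I expect to be the main obstacle, is precisely this uniform $\LL^{1}$ bound on the family $\Phi_{\epsilon}$, whose worst instance is the limit $\epsilon=0$, namely $\big\|\mathcal{F}^{-1}\big(|\cdot|^{2\kappa}e^{-\psi}\big)\big\|_{\LL^{1}}<\infty$. Boundedness of each $\Phi_{\epsilon}$ and integrability over $\{|x|\leq1\}$ are immediate, since $\psi(\xi)\geq C_{\sigma}|\xi|^{\sigma}$ forces the symbols to be integrable and hence $\Phi_{\epsilon}\in\LL^{\infty}$. The real work is the tail $\{|x|>1\}$, where one must produce the decay $|\Phi_{\epsilon}(x)|\lesssim(1+|x|)^{-d-2\kappa}$ \emph{uniformly} in $\epsilon$. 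This decay is dictated by the behaviour of the symbol near $\eta=0$: the leading singularity $|\eta|^{2\kappa}$ (at $\epsilon=0$) produces a tail of order $|x|^{-d-2\kappa}$, which is integrable precisely because $\kappa>0$, while the non-smoothness of $e^{-\psi}$ at the origin (coming from $\psi\sim|\eta|^{\sigma}$) only generates faster-decaying corrections of order $|x|^{-d-2\kappa-\sigma}$; switching on $\epsilon>0$ regularizes the symbol and only improves the estimate. Making this rigorous amounts to the standard pointwise upper bounds on $\sigma$-stable densities and their fractional derivatives available in the references cited in Section~\ref{ssec:levy}; alternatively, in the special case $2\kappa\in 2\NN$ one verifies it directly, since then $(I-A)^{\kappa}$ is a genuine differential operator and $\Phi_{0}$ is a finite combination of derivatives $\nabla^{2j}\rho_{1}$, each of which lies in $\LL^{1}$ by the smoothness-and-integrability of $\rho_1$. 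Once $\sup_{\epsilon}\|\Phi_{\epsilon}\|_{\LL^{1}}<\infty$ is secured, the scaling identity of the previous paragraph closes the argument.
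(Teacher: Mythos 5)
Your reduction is sound, and it is genuinely different from the paper's treatment: the paper gives no argument at all, deferring to \cite[Proposition 7]{Simon-O}, where the case $p=2$ is a one-line Plancherel estimate, $\Vert (I-A)^{\kappa}e^{t\cL}\Vert_{\LL^2\to\LL^2}\leq \sup_{\xi}(1+|\xi|^2)^{\kappa}e^{-t\psi(\xi)}\lesssim t^{-2\kappa/\sigma}$, using only the nondegeneracy $\psi(\xi)\geq C_{\sigma}|\xi|^{\sigma}$, with the extension to general $p$ merely asserted. Your Young-plus-scaling skeleton, in particular the exact identity $\|G_t\|_{\LL^1}=t^{-2\kappa/\sigma}\|\Phi_{t^{2/\sigma}}\|_{\LL^1}$, is correct and would even yield the endpoint cases $p=1,\infty$. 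So the strategy is legitimate; the issue is the step you yourself flag as the crux.

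There, the justification you offer fails in the paper's generality. The paper allows an arbitrary symmetric nondegenerate spectral measure $\mu$, which may be atomic; take $\psi(\xi)=|\xi_1|^{\sigma}+\cdots+|\xi_d|^{\sigma}$. Then $\psi$ is \emph{not} smooth away from the origin: it is only $C^1$ with $(\sigma-1)$-H\"older gradient across each hyperplane $\{\xi_i=0\}$, so your premise that the only singularity of the symbol sits at $\eta=0$ is false, and with it the claimed uniform tail $|\Phi_{\epsilon}(x)|\lesssim(1+|x|)^{-d-2\kappa}$. Concretely, already the $\kappa=0$ kernel is a product of one-dimensional stable densities and decays only like $|x|^{-1-\sigma}$ along coordinate axes (not like $|x|^{-d-\sigma}$); the hyperplane singularities of $e^{-\psi}$ persist for $\kappa>0$ and produce directional tails of order $|x_1|^{-1-\sigma}$, which is slower than $|x|^{-d-2\kappa}$ precisely in the regime where the paper applies the proposition ($2\kappa=1+\alpha$, $d\geq 2$). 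Similarly, the ``standard pointwise upper bounds on $\sigma$-stable densities'' of isotropic type are a rotationally invariant phenomenon; the references of Section \ref{ssec:levy} (\textit{e.g.}~Priola) give gradient and semigroup estimates, not such tails, for general $\mu$. The conclusion $\sup_{\epsilon}\|\Phi_{\epsilon}\|_{\LL^1}<\infty$ is nevertheless true, but it needs a different argument: for instance, combine your even-integer observation ($\Delta^{j}\rho_1\in\LL^1$, with scaling $\|\Delta^{j}\rho_t\|_{\LL^1}=t^{-2j/\sigma}\|\Delta^{j}\rho_1\|_{\LL^1}$, as granted by the paper's proposition on stable densities) with the subordination formula $\lambda^{\kappa}=c_{\kappa}\int_0^{\infty}(1-e^{-s\lambda})s^{-1-\kappa}\,\dd s$ for $\kappa\in(0,1)$, $\lambda=1+|\xi|^2$; this represents $(I-A)^{\kappa}e^{t\cL}$ through $(I-e^{-s}e^{s\Delta})e^{t\cL}$, whose $\LL^1\to\LL^1$ norm is $\lesssim\min\{1,\,s(1+t^{-2/\sigma})\}$, and integrating yields the rate $t^{-2\kappa/\sigma}$; iterate for $\kappa\geq1$. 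As written, your proof is complete only for the rotationally invariant fractional Laplacian (where the origin really is the symbol's only singularity) or when $2\kappa\in2\NN$.
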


\begin{proof}
   The proof is  the same as the one given in  
\cite[Proposition 7]{Simon-O}, which is written for $p=2$, but can be easily extended to the case of general $p \in (1,+\infty)$.
\end{proof}

 Finally we observe that  (by a standard convolution inequality): for any $f \in \HH_p^\kappa$ and $1\leq q< p\leq  \infty $,  we have 
\begin{equation}
\| e^{t\mathcal{L}}f \|_{\kappa, p}\leq  
 \frac{C_{\kappa,\sigma,T,p}}{t^{d/\sigma(1/q-1/p)}} \| f \|_{\kappa, q}. \label{eq:semigroup}
\end{equation}

\subsection{The equation for $u_{t}^{N}$ in
mild form} \label{sec:eq}

The starting point of the proof consists in using Itô's formula, which we apply to any observable of the form $\phi(  X_{t}^{i,N})$, for any  smooth and compactly supported test function $\phi\in C_{0}^{\infty}(\mathbb{R}^{d})$. We use the standard notation \[\langle \mu_t^N,\phi \rangle := \frac{1}{N}\sum_{i=1}^N \phi(X_t^{i,N}). \]
One can easily check that the empirical measure $\mu_{t}^{N}$ satisfies\footnote{Recall that the operator $K$ is linear.}%
\begin{align}
\left\langle \mu_{t}^{N},\phi\right\rangle = & \left\langle \mu_{0}^{N}%
,\phi\right\rangle +\int_{0}^{t}\Big\langle \mu_{s}^{N},\bF\big(K(V^{N}\ast
\mu_{s}^{N})\big)\;\nabla\phi\Big\rangle\; \dd s \notag \\
&+ \frac{1}{2}\int_{0}^{t}\left\langle \mu_{s}^{N}, \mathcal{L} \phi\right\rangle
\dd s \notag \\ & +   \frac{1}{N}\sum_{i=1}^{N}\int_{0}^{t} \int_{\RR^{d}-\{0\}} \big(\phi(X_{s_{-}}^{i,N}+z) - \phi(X_{s_{-}}^{i,N})\big)\;   {\mathcal N}^{i}(\dd s\dd z)
\label{ident fro S^N},
\end{align}
where each $\mathcal{N}^i$ is the Poisson random measure associated with the process $(L_t^i)_{t\geqslant 0}$, see \eqref{eq:poisson}. Let us now write the equation satisfied by $u_t^N$ in \emph{mild form}. We fix $x\in\RR^d$. 
 From \eqref{ident fro S^N}, we obtain by taking the test
function $\phi_{x}\left(  y\right) : =V^{N}\left(  x-y\right)  $ the following identity: 
\begin{align*}
  u_{t}^{N}  (x)    =&\; u_{0}^{N}  (x)  +\int_{0}^{t}\left\langle \mu_{s}
^{N},\bF(K(u_{s}^{N}))\nabla( V^{N}
)  \left(  x-\cdot\right)  \right\rangle \dd s
\\ & 
  +\frac{1}{2}\int_{0}^{t}\mathcal{L}\big( u_{s}%
^{N}\big)  (x)\dd s
\\ & 
 + \frac{1}{N}\sum_{i=1}^{N} \int_{0}^{t} \int_{\RR^{d}-\{0\}} \Big\{  (
e^{h\mathcal{L}} V^{N} )  \big(  x-X_{s_{-}}^{i,N}+ z  \big)  -   
   (V^{N})\big( x-X_{s_{-}}^{i,N}\big)  \Big\} \tilde{\mathcal N}^{i}(\dd s\dd z),
\end{align*}
where $\tilde{\mathcal{N}}^i$ is the compensated Poisson measure defined in \eqref{eq:tildeN}. 
In order to simplify notations, let us write, in the sequel,%
\[
\left\langle \mu_{s}^{N},\bF(K( u_{s}^{N}))\nabla(V^{N} )  \left(  x-\cdot\right)  \right\rangle =:\Big(
\nabla V^{N}   \ast\big(
\bF(K( u_{s}^{N}))\mu_{s}^{N}\big)  \Big)  \left(  x\right).
\]
and likewise for similar expressions. Following a standard procedure, see for instance \cite{flan}, we may
rewrite this equation in mild form as follows:
\begin{align*}
u_{t}^{N}    = &\; e^{t\mathcal{L}}u_{0}^{N}  +\int_{0}^{t}e^{\left(  t-s\right) \mathcal{L}%
}\left(  \nabla  V^{N} \ast\left(
\bF(K( u_{s}^{N}))\mu_{s}^{N}\right)  \right)  \dd s\\
&  +    \frac{1}{N}\sum_{i=1}^{N} \int_{0}^{t}  e^{\left(  t-s\right)\mathcal{L}} \int_{\RR^{d}-\{0\}}  \Big\{ ( V^{N} )  \big(  \cdot -X_{s_{-}}^{i,N}+ z  \big) -   ( V^{N} )\big( \cdot-X_{s_{-}}^{i,N}\big) \Big\} \tilde{\mathcal N}^{i}(\dd s\dd z).
\end{align*}
When one writes the explicit convolution formula for $e^{\left(  t-s\right)
\mathcal{L}}$, we see that $e^{\left(  t-s\right)  \mathcal{L}}\nabla f=\nabla e^{\left(
t-s\right) \mathcal{L} }f$. Using the semi-group property, we may
also write
\begin{align}
u_{t}^{N}&=e^{t\mathcal{L}}u_{0}^{N}+\int_{0}^{t}\nabla
e^{\left(  t-s\right) \mathcal{L}}\left(  V^{N} \ast\left(
\bF(K( u_{t}^{N}))\mu_{s}^{N}\right)  \right)  \dd s + \mathcal{M}_t^N(\cdot)  \label{eq:mild2}\end{align}
%
%
where for any $x \in \RR^d$, $(\mathcal{M}_t^N(x))_{t\geq 0}$ is the martingale given by
  \begin{multline}
\label{eq:martingale}
\mathcal{M}_t^N(x):=  \frac{1}{N}\sum_{i=1}^{N} \int_{0}^{t} \int_{\RR^{d}-\{0\}}  \Big\{ \big(
e^{(t-s)\mathcal{L}} V^{N} \big)  \big(  x-X_{s_{-}}^{i,N} + z  \big) \\ -   
   \big(e^{(t-s)\mathcal{L}} V^{N} \big)\big( x-X_{s_{-}}^{i,N}\big) \Big\} \tilde{\mathcal N}^{i}(\dd s\dd z).
\end{multline}

\section{Proof of Theorem  \ref{th:rate2sing} }

\label{sec:proof1}
From now on we denote by $C>0$ a generic constant which may change line to line, and which may depend only on the following parameters: the time horizon $T>0$, the data $\mathbf{F}$, $K$, $\sigma$, $\beta$, $\alpha$, $p$.

The strategy of the proof consists in using both mild formulas (\ref{eq:mild2}) for $u^N$ and (\ref{eq:mildKSbis })  for $u$: after subtracting one to the other, one estimates each term by triangular inequality. More precisely, in view of     (\ref{eq:mild2})  and (\ref{eq:mildKSbis }) it comes: for any $x\in\RR^d$,
\begin{align}
u^N_{t}(x) - u_{t}(x) &= e^{t\mathcal{L}} (u^N_{0} - u_{0})(x)  \vphantom{\int_0^1} \notag\\
&\quad + \int_{0}^t \nabla  e^{(t-s)\mathcal{L}} 
\Big( \big\langle \mu_{s}^N,  V^N (x-\cdot) \bF( K( u^N_s))(.) \big\rangle - u_{s}  \bF(K(u_s))(x) \Big) \, \dd s \notag\\
&\quad +  \mathcal{M}_t^N(x) \vphantom{\int_0^1}. \notag \end{align}
Therefore
\begin{align} u^N_{t}(x) - u_{t}(x)
&= e^{t\mathcal{L}} (u^N_{0} - u_{0})(x) -  \int_0^t  \nabla  e^{(t-s)\mathcal{L} } \left(u_{s}  \bF(K( u_s)) - u^{N}_{s}  \bF(K(u^{N}_s))\right)(x) ~ \dd s  \notag\\
&\quad + E_{t}(x) + \mathcal{M}^N_{t}(x), \vphantom{\int_0^1} \label{eq:mild2bis}
\end{align}
where $\mathcal{M}_t^N(x)$ is defined in \eqref{eq:martingale} and  we have set
\begin{align}\label{eq:defM}
E_{t}(x)&:= \int_0^t \nabla e^{(t-s)\mathcal{L} } \Big\langle \mu_{s}^{N},  V^N (x-\cdot) \Big( \bF\big( K(u^N_s)\big)(\cdot)-\bF\big( K( u^N_s)\big)(x)\Big)\Big\rangle \ \dd s. 
\end{align}
Then we have, by triangular inequality,
\begin{align}
\left\Vert  u^N_{t}- u_{t} \right\Vert_{\alpha,p}
&\leq \left\Vert  e^{t\mathcal{L}} (u^N_{0} - u_{0}) \right\Vert_{\alpha,p} \vphantom{\int_0^1} \label{eq:previous0} \\ & \quad  +  
\int_0^t  \left\Vert  \nabla e^{(t-s)\mathcal{L} } \left(u_{s}  \bF(K( u_s)) - u^{N}_{s}  \bF(K(u^{N}_s))\right)\right\Vert_{\alpha,p} ~ \dd s \label{eq:previous1}\\
&\quad + \big\Vert   E_{t} \big\Vert_{\alpha,p} +\left\Vert  \mathcal{M}^N_{t}\right\Vert_{\alpha,p}. \vphantom{\int_0^1} \label{eq:previous}
\end{align}
We divide the rest of this  section into two paragraphs corresponding to the two \textbf{Cases} of Assumption \ref{assump}.

\paragraph*{\textbf{Case 1:}} We start with an estimate of the second term \eqref{eq:previous1}. 
We observe  that 
\begin{align*}
 \mathfrak{I}_t^N&:=\int_0^t  \Big\Vert  \nabla  e^{(t-s)\mathcal{L} } \left(u_{s}  \bF(K( u_s)) - u^{N}_{s}  \bF(K(u^{N}_s))\right)\Big\Vert_{\alpha,p} ~ \dd s \\ 
 &\leq C \int_0^t  \Big\Vert  (I-A)^{(1+\alpha)/2} \; e^{(t-s)\mathcal{L} } \Big\Vert_{\LL^p \to \LL^p} \; \big\|u_{s}  \bF(K( u_s)) - u^{N}_{s}  \bF(K(u^{N}_s))\big\Vert_{\LL^p} ~ \dd s.
\end{align*}
Therefore, using Proposition \ref{prop} and then triangular inequality we obtain
\begin{align*} \mathfrak{I}_t^N &  
\leq  C  \int_0^t   \frac{1}{(t-s)^{(1+\alpha)/\sigma}} 
\Big\Vert   u_{s}  \bF(K( u_s)) - u^{N}_{s}  \bF(K(u^{N}_s))\Big\Vert_{\LL^p} ~ \dd s 
\\ &
\leq  C  \int_0^t   \frac{1}{(t-s)^{(1+\alpha)/\sigma}} 
\left\Vert   u_{s} \Big(\bF(K( u_s)) - \bF(K(u^{N}_s))\Big)  \right\Vert_{\LL^p} ~ \dd s 
\\ & \qquad  \qquad
+   C  \int_0^t   \frac{1}{(t-s)^{(1+\alpha)/\sigma}} 
\left\Vert  (u_{s}-u_{s}^{N})  \bF(K(u^{N}_s))  \right\Vert_{\LL^p} ~ \dd s \end{align*}
and we bound \begin{align*}
\mathfrak{I}_t^N   & \overset{(\star)}{\leq}  C  \int_0^t   \frac{1}{(t-s)^{(1+\alpha)/\sigma}} \left\Vert u_{s} \right\Vert_{\LL^\infty}
\left\Vert K( u_s-u^{N}_s)  \right\Vert_{\LL^p} ~ \dd s 
\\ & 
\qquad \qquad +   C  \int_0^t   \frac{1}{(t-s)^{(1+\alpha)/\sigma}} 
\left\Vert  u_{s}-u_{s}^{N} \right\Vert_{\LL^p} ~ \dd s 
\\&
\overset{(\diamond)}{\leq}   C  \int_0^t   \frac{1}{(t-s)^{(1+\alpha)/\sigma}} 
\left\Vert  u_{s}-u_{s}^{N} \right\Vert_{\alpha, p } ~ \dd s 
\end{align*}
where: \begin{itemize} \item to obtain $\overset{(\star)}{\leq}$ we used that $\bF$  is bounded and Lipschitz continuous; \item to obtain $\overset{(\diamond)}{\leq}$ we used: for the first term, the fact that $u$ is bounded since it belongs to $C([0,T], \HH_{p}^\alpha)$ (Proposition \ref{propu}), together with the embedding \eqref{eq:SE0}; and for the second term, the embedding \eqref{eq:SE0} together with assumption \eqref{eq:dCK}, which yields
\[ \|K(u_s-u_s^N)\|_{\LL^p} \leqslant  \|K(u_s-u_s^N)\|_{\lambda,p} \leqslant \mathbf{C}_K\|u_s-u_s^N\|_{\alpha,p}.\] \end{itemize} 
Now, we estimate $\big\|   E_{t} \big\|_{\alpha,p}$ in the same way, as follows:
\begin{align*}
\big\Vert E_{t} \big\Vert_{\alpha, p }
&
\leq  C \int_0^t   \frac{1}{(t-s)^{(1+\alpha)/\sigma}} 
\left\Vert \Big\langle \mu_{s}^N,  V^N (x-\cdot) \Big( \bF\big( K(u^N_s)\big)(x)-\bF\big( K( u^N_s)\big)(\cdot)\Big)\Big\rangle 
\right\Vert_{\LL^p }  \dd s
\\ & 
\overset{(\star)}{\leq} C \int_0^t   \frac{1}{(t-s)^{(1+\alpha)/\sigma}} 
\left\Vert \Big\langle \mu_{s}^N,  V^N (x-\cdot) \; \big| K(u^N_s)(x)- K( u^N_s)(\cdot) \big| \Big\rangle 
\right\Vert_{ \LL^p }  \dd s
\\ & 
\overset{(\diamond)}{\leq}  \frac{C}{N^{\beta(\lambda-d/p) }} \int_0^t   \frac{1}{(t-s)^{(1+\alpha)/\sigma}} 
\left\Vert \langle \mu_{s}^N,  V^N (x-\cdot)  \rangle 
\right\Vert_{\LL^p }  \; \left\Vert u_s^N \right\Vert_{\alpha,p} \dd s
\\& 
= \frac{C}{N^{\beta(\lambda-d/p) }} \int_0^t   \frac{1}{(t-s)^{(1+\alpha)/\sigma}} 
\left\Vert  u_{s}^{N} \right\Vert_{\LL^p }   \ \left\Vert  u_{s}^{N} \right\Vert_{\alpha, p }  \dd s, 
\end{align*}
where: \begin{itemize}
\item to obtain $\overset{(\star)}{\leq}$ 
 we  used the fact that $\bF$  is   Lipschitz continuous;
 \item to obtain $\overset{(\diamond)}{\leq}$ we used: first, 
the Sobolev embedding \eqref{eq:SE1} with the parameter $\lambda > \frac d p$ coming from Assumption \ref{assump}, and then, the assumption \eqref{eq:dCK} on the operator $K$.
\end{itemize}
Therefore, from all the previous estimates we deduce that 
\begin{align}
\left\Vert  u^N_{t}- u_{t} \right\Vert_{\alpha,p}
&\leq \left\Vert  e^{t\mathcal{L}} (u^N_{0} - u_{0}) \right\Vert_{\alpha,p} +  
 C  \int_0^t   \frac{1}{(t-s)^{(1+\alpha)/\sigma}} 
\left\Vert  u_{s}-u_{s}^{N} \right\Vert_{\LL^p} ~ \dd s  \notag\\
&\quad +  \frac{C}{N^{\beta(\lambda-d/p) }} \int_0^t   \frac{1}{(t-s)^{(1+\alpha)/\sigma}} 
\left\Vert  u_{s}^{N} \right\Vert_{\LL^p }   \ \left\Vert  u_{s}^{N} \right\Vert_{\alpha, p }  \dd s  +\left\Vert  \mathcal{M}^N_{t} \right\Vert_{\alpha,p}.  \label{eq:last}
\end{align}
We take expectation on both sides: in the time integral appearing in \eqref{eq:last}, we first use the Cauchy-Schwarz inequality in order to bound 
\begin{equation}\label{eq:cs}\EE\big[\|u_s^N\|_{\LL^p} \| u_s^N\|_{\alpha,p} \big] \leqslant \EE\big[\|u_s^N\|_{\LL^p}^2\big]^{1/2} \; \EE\big[\|u_s^N\|_{\alpha,p}^2\big]^{1/2}. \end{equation}
We need an intermediate lemma: 
\begin{lemma}\label{estimation} Take any $\kappa$ such that $-1 \leq \kappa\leq \alpha < \sigma-1$, recalling that Assumption \ref{assump} is verified. There exists a constant $C>0$ such that, for any $t\in[0,T]$ and $N\in\mathbb{N}$, it holds
\[
\mathbb{E}\big[  \big\Vert \left(  I-A \right) ^{\kappa/2}u_{t}
^{N}\big\Vert^2_{\LL^{p}}\big]  \leq
C.
\]
\end{lemma}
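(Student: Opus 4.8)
The plan is to run the mild formula \eqref{eq:mild2} through the operator $(I-A)^{\kappa/2}$, take $\mathbb{L}^p(\mathbb{R}^d)$-norms and split into the three contributions coming from the initial datum, the drift integral, and the martingale $\mathcal{M}_t^N$ of \eqref{eq:martingale}. The key structural observation, which organizes the whole argument, is that the drift is controlled by the \emph{$\mathbb{L}^p$-norm alone}: since $\mathbf{F}$ is bounded by $M+1$, one has $\|V^N\ast(\mathbf{F}(K(u_s^N))\mu_s^N)\|_{\mathbb{L}^p}\leq (M+1)\|V^N\ast\mu_s^N\|_{\mathbb{L}^p}=(M+1)\|u_s^N\|_{\mathbb{L}^p}$. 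Consequently I would first prove the statement for $\kappa=0$ and then bootstrap to every $\kappa\in[-1,\alpha]$. For $\kappa=0$, applying $\nabla(I-A)^{0}\lesssim (I-A)^{1/2}$ and Proposition \ref{prop} gives, with $r=t-s$, a self-referential singular estimate
\[
\|u_t^N\|_{\mathbb{L}^p}\leq \|e^{t\mathcal{L}}u_0^N\|_{\mathbb{L}^p}+C\int_0^t (t-s)^{-1/\sigma}\|u_s^N\|_{\mathbb{L}^p}\,\dd s+\|\mathcal{M}_t^N\|_{\mathbb{L}^p}.
\]
Squaring, applying Cauchy--Schwarz to the convolution (the weight $(t-s)^{-1/\sigma}$ is integrable because $\sigma>1$), and taking expectations yields $\phi(t):=\mathbb{E}[\|u_t^N\|_{\mathbb{L}^p}^2]\leq C+ C\int_0^t (t-s)^{-1/\sigma}\phi(s)\,\dd s$, which a singular (Gronwall--Henry) lemma closes to $\sup_{t\le T}\phi(t)\le C$ uniformly in $N$, \emph{provided} the initial and martingale terms are bounded.

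The initial term is immediate: $e^{t\mathcal{L}}$ is a contraction on $\HH_p^\kappa$, and since $\kappa\leq\alpha$ we have $\|\cdot\|_{\kappa,p}\lesssim\|\cdot\|_{\alpha,p}$, so $\mathbb{E}[\|e^{t\mathcal{L}}u_0^N\|_{\kappa,p}^2]\leq C\,\mathbb{E}[\|u_0^N\|_{\alpha,p}^2]\leq C$ by the initial-condition hypothesis \eqref{initial cond}. Once the $\kappa=0$ bound is available, the case of general $\kappa\in[-1,\alpha]$ requires no further Gronwall argument: the drift integral $\int_0^t (t-s)^{-(1+\kappa)/\sigma}\|u_s^N\|_{\mathbb{L}^p}\,\dd s$ has integrable weight (as $\kappa\leq\alpha<\sigma-1$ forces $(1+\kappa)/\sigma<1$) and involves only the now-controlled $\mathbb{L}^p$-norm, so Cauchy--Schwarz and the uniform bound on $\phi$ finish it directly.

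The crux is the martingale estimate $\mathbb{E}[\|(I-A)^{\kappa/2}\mathcal{M}_t^N\|_{\mathbb{L}^p}^2]\leq C$ uniformly in $N,t$. Since $p>2$, I would first pass via Lyapunov's inequality to the $p$-th moment, $\mathbb{E}[\|\cdot\|_{\mathbb{L}^p}^2]\leq(\mathbb{E}[\|\cdot\|_{\mathbb{L}^p}^p])^{2/p}$, use Fubini to write $\mathbb{E}[\|\cdot\|_{\mathbb{L}^p}^p]=\int_{\mathbb{R}^d}\mathbb{E}[|(I-A)^{\kappa/2}\mathcal{M}_t^N(x)|^p]\,\dd x$, and estimate the fiberwise real martingale by Kunita's inequality. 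Writing $W_r:=(I-A)^{\kappa/2}e^{r\mathcal{L}}V^N$ and $\Delta_z W(y):=W(y+z)-W(y)$, orthogonality of the independent Poisson measures $\mathcal{N}^i$ produces a quadratic-variation term $\tfrac{1}{N^2}\sum_i\int_0^t\!\int|\Delta_z W_{t-s}(x-X_{s_-}^i)|^2\nu(\dd z)\dd s$ and a jump term with the $p$-th power. The decisive simplification is translation invariance of Lebesgue measure: after integrating in $x$, each summand $\int_{\mathbb{R}^d}|\Delta_z W_{t-s}(x-X_{s_-}^i)|^p\,\dd x=\|\Delta_z W_{t-s}\|_{\mathbb{L}^p}^p$ becomes \emph{deterministic and independent of the particle positions}, so the random positions disappear; combining this with Jensen's inequality on the normalized sum $\tfrac1N\sum_i$ and Minkowski's integral inequality turns both contributions into $N^{-1}\int_0^t\!\int\|\Delta_z W_r\|_{\mathbb{L}^p}^2\nu(\dd z)\dd r$ and $N^{-(p-1)}\int_0^t\!\int\|\Delta_z W_r\|_{\mathbb{L}^p}^p\nu(\dd z)\dd r$, the gain $N^{-1}$ (resp.~$N^{-(p-1)}$) originating from the $\tfrac1N$ prefactor in $\mathcal{M}_t^N$.

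It remains to estimate $\|\Delta_z W_r\|_{\mathbb{L}^p}$, and this is where the maximal-function bound, the semigroup smoothing, and the scaling $V^N=N^{d\beta}V(N^\beta\cdot)$ all enter and must be balanced. For $|z|\leq1$, Lemma \ref{maxi} together with $\mathbb{L}^p$-boundedness of the maximal function gives $\|\Delta_z W_r\|_{\mathbb{L}^p}\lesssim|z|\,\|\nabla W_r\|_{\mathbb{L}^p}$, while for $|z|>1$ one uses $\|\Delta_z W_r\|_{\mathbb{L}^p}\leq2\|W_r\|_{\mathbb{L}^p}$; the $\nu$-integrals $\int_{|z|\le1}|z|^2\nu(\dd z)$, $\int_{|z|>1}\nu(\dd z)$ (and $\int_{|z|\le1}|z|^p\nu(\dd z)$, finite since $p>2>\sigma$) converge. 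To bound $\|\nabla W_r\|_{\mathbb{L}^p}\leq\|(I-A)^{(1+\kappa)/2}e^{r\mathcal{L}}V^N\|_{\mathbb{L}^p}$ I would commute a fractional power through the semigroup, $(I-A)^{(1+\kappa)/2}e^{r\mathcal{L}}=(I-A)^{\gamma/2}e^{r\mathcal{L}}(I-A)^{(1+\kappa-\gamma)/2}$, apply Proposition \ref{prop} to get $\lesssim r^{-\gamma/\sigma}\|V^N\|_{1+\kappa-\gamma,p}$, and use the scaling $\|V^N\|_{s,p}\sim N^{\beta(s+d-d/p)}$. Integrability in $r$ needs $2\gamma/\sigma<1$, i.e.\ $\gamma<\sigma/2$; setting $\delta:=1-\gamma$ one has $\delta>1-\tfrac\sigma2$ and $1+\kappa-\gamma=\kappa+\delta$, so the quadratic-variation contribution is of order $N^{-1}N^{2\beta(\kappa+\delta+d-d/p)}$, which is uniformly bounded precisely under condition \eqref{eq:alpha} in the worst case $\kappa=\alpha$ (and a fortiori for smaller $\kappa$); the jump term is handled identically and gives a non-binding constraint. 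I expect this final bookkeeping of the $N$-powers against the time singularity---and checking that it matches \eqref{eq:alpha} exactly---to be the main obstacle, together with the correct invocation of Kunita's inequality for the sum of independent compensated Poisson integrals.
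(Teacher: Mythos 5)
Your proposal is correct, and its skeleton is exactly the paper's: start from the mild formula \eqref{eq:mild2}, observe that boundedness of $\bF$ together with $V^N\geq 0$ reduces the drift term to the plain $\LL^p$-norm $\|u_s^N\|_{\LL^p}$, apply Proposition \ref{prop} to get the singularity $(t-s)^{-(1+\kappa)/\sigma}$ (integrable since $1+\kappa\leq 1+\alpha<\sigma$), control the initial term by \eqref{initial cond}, and close with a singular Gronwall lemma. (The paper runs one Gronwall pass on $\|\cdot\|_{\LL^2(\Omega;\LL^p)}$ without squaring; its displayed inequality has $(I-A)^{\kappa/2}$ on the left but only the plain norm on the right, so your explicit two-step structure --- $\kappa=0$ first, then bootstrap to all $\kappa\in[-1,\alpha]$ --- is arguably the cleaner rendering of the same step.) The genuine divergence is in the martingale bound, which you rightly call the crux, and which the paper isolates as Lemma \ref{lemma martingale 2}. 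The paper never takes $p$-th moments: it invokes the Sobolev embedding \eqref{eq:SE2} with $q=2<p$, i.e.\ $\HH_2^{\kappa+d/2-d/p}\hookrightarrow\HH_p^\kappa$, to transfer everything to $\LL^2(\Omega\times\RR^d)$, where the compensated-Poisson It\^o isometry (plus independence of the $\mathcal N^i$) does all the probabilistic work in one line, followed by the same small/large-jump split, maximal-function bound (Lemma \ref{maxi}) for $|z|\leq 1$, and the $\gamma=1-\delta$ smoothing trade-off you describe. You instead stay in $\LL^p$ via Lyapunov, Fubini, fiberwise Kunita/BDG, translation invariance, and Minkowski. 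Both routes need $p>2$, and --- pleasantly --- they yield the \emph{same} $N$-exponent $1-2\beta(\kappa+d-d/p+\delta)$ on the dominant quadratic-variation part: the $d/2-d/p$ derivatives you avoid paying in the embedding are exactly offset by the cheaper $\LL^2$-scaling $\|V^N\|_{\HH_2^{s}}\sim N^{\beta(s+d/2)}$ versus $\|V^N\|_{s,p}\sim N^{\beta(s+d-d/p)}$. What the paper's route buys is that your extra $p$-th-power jump term disappears entirely; what your route buys is a self-contained $\LL^p$ argument, at the price of Kunita's inequality and one claim you assert without proof, namely that the jump term $N^{-(p-1)}\int_0^t\int\|W_r(\cdot+z)-W_r\|_{\LL^p}^p\,\nu(\dd z)\,\dd r$ is non-binding. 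That claim is true but deserves its three lines: using a separate smoothing exponent $\gamma'<\sigma/p$ for this term, boundedness amounts to $\beta(\kappa+1-\gamma'+d-d/p)\leq 1-\tfrac1p$, which does follow from \eqref{eq:alpha} since $\alpha+d-d/p>d\geq 1$ and since $\delta>1-\tfrac\sigma2$ makes the choice $\gamma'\geq \tfrac2p-2(1-\tfrac1p)\delta$ compatible with $\gamma'<\sigma/p$ (because $2(p-1)\delta>2-\sigma$); in any case, only uniform boundedness --- not a rate --- is needed for the present lemma, so this does not affect its validity.
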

The proof of the above lemma does not involve any difficulty, and we postpone it to Appendix \ref{sec:app}. 
We use Lemma \ref{estimation} twice: once with $\kappa=0$ and once with $\kappa=\alpha$, and we finally obtain from \eqref{eq:cs} that $\EE\big[\|u_s^N\|_{\LL^p} \|u_s^N\|_{\alpha,p} \big]$ is bounded by a constant with does not depend on $N$. 

It remains to estimate  $\EE[\|\mathcal{M}^N_t\|_{\alpha,p}]$. This is also done \emph{via} an intermediate lemma, stated as follows, and whose proof is  postponed to Appendix \ref{sec:app}, since it also uses standard arguments: 
\begin{lemma} \label{lemma martingale 2} Take any $\kappa$ such that $-1 \leq \kappa \leq \alpha < \sigma -1$, recalling that Assumption \ref{assump} is verified.  Then, there exists a constant $C>0$, such that,  for any  $N\in\mathbb N$ and $t\geq 0$ it holds,
\[
\EE \Big[\big\Vert  \left(  I-A\right)^{\kappa/2} \mathcal{M}_t^{N}    \big\Vert _{\LL^{p}}^{2} \Big]\leq
\frac{C}{N^{  1- 2 \beta( \kappa +d -d/p + \delta )     }} . 
\]
\end{lemma}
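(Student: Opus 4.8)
The plan is to estimate the second moment of the $\LL^p$-norm by first exchanging the spatial $\LL^p(\RR^d)$ norm with the probabilistic $\LL^2(\Omega)$ norm, and then computing the resulting variance through the Itô isometry. Since $(I-A)^{\kappa/2}$ acts only on the space variable, it commutes with the stochastic integrals, so $(I-A)^{\kappa/2}\mathcal{M}_t^N$ is again a martingale of the same form with the kernel $V^N$ replaced by $W_{t-s}:=(I-A)^{\kappa/2}e^{(t-s)\mathcal{L}}V^N$. Because $p\geq 2$, Minkowski's integral inequality yields
\[
\EE\big[\| (I-A)^{\kappa/2}\mathcal{M}_t^N\|_{\LL^p}^2\big]\leq \Big(\int_{\RR^d}\big(\EE\big[| (I-A)^{\kappa/2}\mathcal{M}_t^N(x)|^2\big]\big)^{p/2}\dd x\Big)^{2/p},
\]
so that it suffices to control the pointwise variance. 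For fixed $x$, I would apply the Itô isometry for compensated Poisson integrals; the crucial point is that, even though the particles are coupled, the driving measures $\tilde{\mathcal N}^i$ and $\tilde{\mathcal N}^j$ have no common jumps for $i\neq j$, hence the mixed quadratic covariations vanish and only the diagonal terms survive. This produces $\frac{1}{N^2}\sum_{i=1}^N \EE\big[\int_0^t\int_{\RR^d-\{0\}}|\Phi_{t,s}^i(x,z)|^2\dd\nu(z)\dd s\big]$, where $\Phi_{t,s}^i(x,z):=W_{t-s}(x-X_{s_-}^{i,N}+z)-W_{t-s}(x-X_{s_-}^{i,N})$.

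The main obstacle is the singularity of the L\'evy measure $\nu$ near the origin, which must be absorbed by the increment structure of $\Phi$. I would split the inner integral into $\{|z|\leq 1\}$ and $\{|z|>1\}$. On $\{|z|>1\}$, where $\nu$ is finite, I use the crude bound $|\Phi_{t,s}^i|^2\lesssim |W_{t-s}(x-X^i_{s_-}+z)|^2+|W_{t-s}(x-X^i_{s_-})|^2$. On $\{|z|\leq 1\}$, I invoke the maximal-function estimate of Lemma \ref{maxi} to get $|\Phi_{t,s}^i(x,z)|\lesssim |z|\,(\mathbb{M}|\nabla W_{t-s}|(x-X^i_{s_-}+z)+\mathbb{M}|\nabla W_{t-s}|(x-X^i_{s_-}))$, and then use that $\int_{|z|\leq 1}|z|^2\dd\nu(z)<\infty$ precisely because $\sigma<2$. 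Taking the $\LL^{p/2}(\RR^d)$ norm, moving it inside the expectation, the time integral and the $\dd\nu$ integral via Minkowski, and exploiting translation invariance of Lebesgue measure to discard the random shifts by $X^i_{s_-}$ and $z$, makes the integrand independent of both $i$ and the particle positions. The maximal-function $\LL^p$-boundedness (second part of Lemma \ref{maxi}) then bounds everything by $\|\nabla W_{t-s}\|_{\LL^p}^2+\|W_{t-s}\|_{\LL^p}^2$; crucially, the sum over $i$ now contributes only a factor $N$, which against the prefactor $N^{-2}$ leaves the expected gain $N^{-1}$.

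It remains to integrate in time, and here the interplay between the semigroup smoothing and the scaling of $V^N$ is decisive. Writing $\|\nabla W_{t-s}\|_{\LL^p}\lesssim \|(I-A)^{(\kappa+1)/2}e^{(t-s)\mathcal{L}}V^N\|_{\LL^p}$, I would split $(I-A)^{(\kappa+1)/2}=(I-A)^{\theta}(I-A)^{(\kappa+1)/2-\theta}$, apply Proposition \ref{prop} to the factor $(I-A)^{\theta}e^{(t-s)\mathcal{L}}$ to produce the singularity $(t-s)^{-2\theta/\sigma}$, and let the remaining factor act on $V^N$. Using the scaling identity $\|V^N\|_{m,p}\lesssim N^{\beta(d-d/p+m)}$ with $m=\kappa+1-2\theta$, the squared time integral converges provided $4\theta/\sigma<1$, and yields a power $N^{2\beta(d-d/p+\kappa+1-2\theta)}$. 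Choosing $\theta=(1-\delta)/2$ makes $1-2\theta=\delta$ and turns the integrability condition $\theta<\sigma/4$ into exactly the hypothesis $\delta>1-\tfrac\sigma2$ of Assumption \ref{assump}; combined with the $N^{-1}$ gain this gives the claimed exponent $1-2\beta(\kappa+d-d/p+\delta)$. The term $\|W_{t-s}\|_{\LL^p}^2$ (without gradient) is handled identically with one derivative fewer and is strictly dominated. The delicate part throughout will be the bookkeeping of this regularity split, so that the time singularity stays integrable exactly under $\delta>1-\tfrac\sigma2$ while one tracks the correct power of $N^\beta$ coming from the concentration of $V^N$.
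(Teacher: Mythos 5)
Your overall architecture (splitting the Lévy measure at $|z|=1$, the crude bound for large jumps, the maximal function of Lemma \ref{maxi} for small jumps, and the $(I-A)^{\theta}$-splitting with $\theta=(1-\delta)/2$ so that the squared time singularity $(t-s)^{-2(1-\delta)/\sigma}$ is integrable precisely when $\delta>1-\tfrac\sigma2$) is exactly that of the paper's proof, and your scaling bookkeeping returns the correct exponent. But your very first reduction is stated with the inequality in the wrong direction, and this is a genuine gap. For $p\geq 2$, Minkowski's integral inequality, applied with exponent $p/2\geq 1$ to the function $(x,\omega)\mapsto|(I-A)^{\kappa/2}\mathcal{M}_t^N(x)|^2$, gives
\[
\Big(\int_{\RR^d}\big(\EE\big[|(I-A)^{\kappa/2}\mathcal{M}_t^N(x)|^2\big]\big)^{p/2}\,\dd x\Big)^{2/p}\;\leq\;\EE\Big[\big\|(I-A)^{\kappa/2}\mathcal{M}_t^N\big\|_{\LL^p}^2\Big],
\]
i.e.\ the pointwise-variance expression is a \emph{lower} bound for the quantity you must estimate, not an upper bound; your claimed inequality is the reverse of Minkowski, valid only for $p\leq 2$, whereas point \emph{2.} of Assumption \ref{assump} forces $p>2$. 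The defect is not merely formal: to pass through pointwise moments when $p>2$ one needs $\EE\big[|\mathcal{M}_t^N(x)|^p\big]$, and for compensated Poisson integrals the Rosenthal/Kunita-type inequalities controlling $p$-th moments contain, besides the $p/2$-power of the predictable bracket $\int_0^t\int|\Phi|^2\,\dd\nu\,\dd s$, an additional $p$-th order term $\EE\int_0^t\int|\Phi|^p\,\dd\nu\,\dd s$ which your computation never produces. So the pointwise variance alone cannot control the $\LL^p$-in-space, $\LL^2(\Omega)$-in-chance size of the field, and the quantitative heart of your argument rests on an invalid interchange.

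The paper avoids this by trading integrability for regularity at the outset: by the Sobolev embedding \eqref{eq:SE2} with $q=2<p$,
\[
\EE\Big[\big\|(I-A)^{\kappa/2}\mathcal{M}_t^N\big\|_{\LL^p}^2\Big]\;\leq\;C\,\EE\Big[\big\|(I-A)^{(\kappa+d/2-d/p)/2}\mathcal{M}_t^N\big\|_{\LL^2}^2\Big],
\]
and in $\LL^2(\Omega\times\RR^d)$ Fubini plus the It\^o isometry apply exactly, with no higher-moment corrections and no need for Minkowski at all. Everything downstream of your faulty step then goes through verbatim, with $p$ replaced by $2$ and $\kappa$ by $\kappa+\tfrac d2-\tfrac dp$: the extra derivatives $\tfrac d2-\tfrac dp$ paid to the embedding, combined with the $\LL^2$-scaling $\|V^N\|_{\HH_2^{s}}\sim N^{\beta(d/2+s)}$, reproduce exactly the power $N^{2\beta(\kappa+d-d/p+\delta)}$ that your direct-$\LL^p$ bookkeeping predicted — which is why the exponent in the statement involves $d-\tfrac dp$. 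The repair is therefore local: insert the embedding \eqref{eq:SE2} as the first step (or, if you insist on working in $\LL^p$ directly, invoke a Banach-space maximal inequality for Poisson stochastic convolutions and control the additional $p$-th order Rosenthal term, which is substantially heavier machinery).
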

We apply the above with $\kappa=\alpha$ in order to bound $\EE[\|\mathcal{M}_t^N\|_{\alpha,p}]$ (recalling that $\EE[|X|] \leqslant \EE[X^2]^{1/2}$). To sum up, since $1+\alpha < \sigma$ from Assumption \ref{assump}, we obtain
\begin{align*}
\EE \big[ \left\Vert  u^N_{t} - u_{t} \right\Vert_{\alpha,p} \big]
&\leq C\;\EE \big[\left\Vert  u^N_{0} - u_{0} \right\Vert_{\alpha,p} \big]+  
 C  \int_0^t   \frac{1}{(t-s)^{(1+\alpha)/\sigma}} 
\EE \big[\left\Vert  u_{s}-u_{s}^{N} \right\Vert_{\LL^p}\big] ~ \dd s  \\
&\quad +  \frac{C}{N^{\beta(\lambda-d/p) }}  +  \frac{C}{N^{\frac12(1- 2 \beta( \alpha +d -d/p + \delta ))}} 
\end{align*}
and from Gronwall's Lemma we may now conclude 
\begin{align*}
\EE \big[\left\Vert  u^N_{t} - u_{t} \right\Vert_{\alpha,p}\big]
&\leq C \; \EE\big[ \left\Vert  u^N_{0} - u_{0} \right\Vert_{\alpha,p}\big] +  \frac{C}{N^{\beta(\lambda-d/p) }}  + \frac{C}{N^{\frac12(1-  2\beta( \alpha +d -d/p + \delta ))}} . 
\end{align*}
Recalling Assumption \ref{assump}, we obtain exactly \eqref{eq:concl}, and this concludes the proof of Theorem \ref{th:rate2sing} in \textbf{Case 1}.

\paragraph*{\textbf{Case 2:}} The proof follows the same lines as in the previous case, however some estimates need to be changed. Recall \eqref{eq:previous}, namely
\begin{align*}
\left\Vert  u^N_{t} - u_{t}\right\Vert_{\alpha,p}
&\leq \left\Vert  e^{t\mathcal{L}} (u^N_{0} - u_{0}) \right\Vert_{\alpha,p} +  
\mathfrak{I}_t^N + \big\Vert   E_{t} \big\Vert_{\alpha,p} +\left\Vert  \mathcal{M}^N_{t} \right\Vert_{\alpha,p}. 
\end{align*}
 As in \textbf{Case 1}, we bound first $\mathfrak{I}_t^N$ as follows:
\begin{align*}
&\int_0^t  \big\Vert  \nabla  e^{(t-s)\mathcal{L} } \left(u_{s}  \bF(K( u_s)) - u^{N}_{s}  \bF(K(u^{N}_s))\right)\big\Vert_{\alpha,p} ~ \dd s 
\\
&\leq  C  \int_0^t   \frac{1}{(t-s)^{(1+\alpha)/\sigma}} 
\left\Vert  u_s-u^{N}_s \right\Vert_{\LL^p} ~ \dd s 
+ C  \int_0^t   \frac{1}{(t-s)^{(1+\alpha)/\sigma}} \left\Vert u_{s} \right\Vert_{\LL^\infty}
\| K(u_s-u^{N}_s)\|_{\LL^p} ~\dd s 
\\
&\leq  C  \int_0^t   \frac{1}{(t-s)^{(1+\alpha)/\sigma}} 
\left\Vert  u_{s}-u_{s}^{N} \right\Vert_{\alpha, p} ~ \dd s 
 +   C  \int_0^t   \frac{1}{(t-s)^{(1+\alpha)/\sigma}} 
\left\Vert  K(u_{s}-u_{s}^{N}) \right\Vert_{ \LL^p } ~ \dd s 
\end{align*}
where we used as before that $\bF$  is bounded and Lipschitz, and $u$ is bounded. For the remaining two terms, we can start with the same estimates as before, 
\begin{align}
\big\Vert   E_{t} \big\Vert_{\alpha, p }&
\leq C \int_0^t   \frac{1}{(t-s)^{(1+\alpha)/\sigma}}
\Big\Vert \Big\langle \mu_{s}^N,  V^N (x-\cdot) \ \big| K(u^N_s)(x)- K( u^N_s)(\cdot) \big| \Big\rangle 
\Big\Vert_{ \LL^p }  ~ \dd s
\notag \\ & 
\leq  \frac{C}{N^{\beta(\lambda-d/p) }} \int_0^t    \frac{1}{(t-s)^{(1+\alpha)/\sigma}} \
\left\Vert \langle \mu_{s}^N,  V^N (x-\cdot)  \rangle 
\right\Vert_{\LL^p}  \left(\Vert  u_s^{N}\Vert_{\LL^p} +1\right)      ~\dd s
\notag \\ & 
\leq  \frac{C}{N^{\beta(\lambda-d/p) }} \int_0^t 
\frac{1}{(t-s)^{(1+\alpha)/\sigma}} \
  \left( \Vert  u_{s}^{N} \Vert_{\LL^p}^{2} +   \left\Vert  u_{s}^{N} \right\Vert_{\LL^p}\right)  \dd s,  \label{eq:estiE}
\end{align}
where we  used the fact that $\mathbf{F}$  is   Lipschitz continuous, the second assumption \eqref{eq:dCK2} on the kernel operator, 
the Sobolev embedding \eqref{eq:SE1} and the fact that $\|u_{s}^{N}\|_{\LL^1}=1$. 

As previously, we now take expectations and use the intermediate lemmas, namely: Lemma \ref{estimation} with $\kappa=0$, in order to bound the expectation of the right-hand side of \eqref{eq:estiE} and   Lemma \ref{lemma martingale 2}, in order to estimate $\EE[\Vert  \mathcal{M}^N_{t}\Vert_{\alpha,p}]$.  
We deduce  that 
\begin{align}
\EE \big[\left\Vert  u^N_{t} - u_{t} \right\Vert_{\alpha,p}\big]
&\leq \EE \big[ \left\Vert  u^N_{0} - u_{0} \right\Vert_{\alpha,p}\big] +  
 C  \int_0^t   \frac{1}{(t-s)^{(1+\alpha)/\sigma}}  \notag
\EE \big[\left\Vert  u_{s}-u_{s}^{N} \right\Vert_{\alpha,p}\big] ~ \dd s  \\
& \qquad +  C  \int_0^t   \frac{1}{(t-s)^{(1+\alpha)/\sigma}} 
\EE \big[ \| K(u_{s}-u_{s}^{N}) \|_{\LL^p}\big] ~ \dd s \label{eq:esK} \\
&\qquad +  \frac{C}{N^{\beta(\lambda-d/p) }}  +  \frac{C}{N^{\frac12(1-  2\beta( \alpha +d -d/p + \delta ))}} . \notag 
\end{align}
Now, on the contrary to the previous case, there is little more work to close this estimate, in particular because of \eqref{eq:esK}. Applying $K$ to the mild formula \eqref{eq:mild2bis}, let us bound similarly:
\begin{align*}
\left\Vert  K(u^N_{t}- u_{t}) \right\Vert_{\LL^p}
&\leq \left\Vert  e^{t\mathcal{L}}K (u^N_{0} - u_{0}) \right\Vert_{\LL^p} \vphantom{\int}\\ & \qquad +  
\int_0^t  \left\Vert  \nabla  K  e^{(t-s)\mathcal{L} } \left(u_{s}  \bF(K( u_s)) - u^{N}_{s}  \bF(K(u^{N}_s))\right)\right\Vert_{\LL^p} ~ \dd s \\
& \vphantom{\int} \qquad + \big\Vert   K E_{t} \big\Vert_{\LL^p} +\left\Vert  K \mathcal{M}^N_{t} \right\Vert_{\LL^p}. 
\end{align*}
We first treat the second term in the right-hand side: we obtain, by triangular inequality and using assumption \eqref{eq:dCK2}, 
\begin{align*}
\int_0^t  &\left\Vert  \nabla K  e^{(t-s)\mathcal{L} } \left(u_{s}  \bF(K( u_s)) - u^{N}_{s}  \bF(K(u^{N}_s))\right)\right\Vert_{\LL^p} ~ \dd s \\ &
\leq  C  \int_0^t   \left\Vert  (u_{s}-u_{s}^{N})  \bF(K(u^{N}_s))  \right\Vert_{\LL^p} ~ \dd s 
+ C  \int_0^t    \left\Vert   u_{s} \big( \bF(K( u_s)) - \bF(K(u^{N}_s))\big) \right\Vert_{\LL^p} ~ \dd s 
\\&
\leq  C  \int_0^t   \left\Vert  u_s-u^{N}_s \right\Vert_{\LL^p} ~ \dd s 
+ C  \int_0^t   \left\Vert u_{s} \right\Vert_{\LL^\infty}
\| K(u_s-u^{N}_s)\|_{\LL^p}~ \dd s 
\end{align*}
where in the last inequality we used once again the fact that $\bF$  is bounded and Lipschitz continuous, and also that $u$ is bounded from Proposition \ref{propu}. Let us now estimate, using very similar ideas as in \eqref{eq:estiE}:
\begin{align*}
\big\Vert  K E_{t} \big\Vert_{ \LL^p }&
\leq  C \int_0^t  \left\Vert \nabla K  \Big\langle \mu_{s}^N,  V^N (x-\cdot) \Big( \bF\big( K(u^N_s)\big)(x)-\bF\big( K( u^N_s)\big)(\cdot)\Big)\Big\rangle 
\right\Vert_{\LL^p } ~ \dd s
\\ & 
\leq C \int_0^t   
\Big\Vert \Big\langle \mu_{s}^N,  V^N (x-\cdot) \ \big| K(u^N_s)(x)- K( u^N_s)(\cdot) \big| \Big\rangle 
\Big\Vert_{ \LL^p }  ~ \dd s
\\ & 
\leq  \frac{C}{N^{\beta(\lambda-d/p) }} \int_0^t   
\left\Vert \langle \mu_{s}^N,  V^N (x-\cdot)  \rangle 
\right\Vert_{\LL^p}  \left(\Vert  u_s^{N}\Vert_{\LL^p} +1\right)      ~\dd s
\\ & 
\leq  \frac{C}{N^{\beta(\lambda-d/p) }} \int_0^t   
  \left( \Vert  u_{s}^{N} \Vert_{\LL^p}^{2} +   \left\Vert  u_{s}^{N} \right\Vert_{\LL^p}\right)  \dd s, 
\end{align*}
where we  used, as before the fact that $\mathbf{F}$  is   Lipschitz continuous, assumption \eqref{eq:dCK2}, 
the Sobolev embedding \eqref{eq:SE1} and the fact that $\|u_{s}^{N}\|_{\LL^1}=1$.  After taking expectation we use once again Lemma \ref{estimation}, and we obtain: 
\[ \EE\big[\|KE_t\|_{\LL^p}\big] \leqslant\frac{C}{N^{\beta(\lambda-d/p)}}. \]
It remains to treat the term involing the martingale. Let us write  
\begin{align} 
\left\Vert  K \mathcal{M}^N_{t} \right\Vert_{\LL^p} 
&=\left\Vert (-\Delta)^{1/2} (-\Delta)^{-1/2} K \mathcal{M}^N_{t} \right\Vert_{\LL^p}  \notag
\\ & 
=\left\Vert (-\Delta)^{1/2}  K (-\Delta)^{-1/2} \mathcal{M}^N_{t} \right\Vert_{\LL^p} \notag
\\ & 
=| K (-\Delta)^{-1/2} \mathcal{M}^N_{t} |_{1,p}  \notag
\\ & 
\leq C \left\Vert \nabla K (-\Delta)^{-1/2} \mathcal{M}^N_{t} \right\Vert_{\LL^p}  \notag
\\ & 
\leq  C\left\Vert (-\Delta)^{-1/2} \mathcal{M}^N_{t} \right\Vert_{\LL^p} \notag \end{align}
where we used in the last inequality the assumption \eqref{eq:dCK2}. We finally use a third intermediate lemma, similar to the previous one, and also proved in Appendix \ref{sec:app}: 
\begin{lemma} \label{lemma martingale 4}  Take any $\kappa$ such that $-1 \leq \kappa \leq \alpha < \sigma -1$, recalling that Assumption \ref{assump} is verified.  Then, there exists a constant $C>0$, such that,  for any  $N\in\mathbb N$ and $t\geq 0$ it holds,
\[
\EE \Big[\big\Vert  \left( -\Delta\right)^{\kappa/2} \mathcal{M}_t^{N}    \big\Vert _{\LL^{p}(\mathbb{R}^{d})}^{2} \Big]\leq
\frac{C}{N^{  1- 2\beta( \kappa +d -d/p + \delta )    }} . 
\]
\end{lemma}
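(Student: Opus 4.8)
The statement is the exact homogeneous counterpart of Lemma \ref{lemma martingale 2}, the only change being that the Bessel operator $(I-A)^{\kappa/2}$ is replaced by the Riesz operator $(-\Delta)^{\kappa/2}$. The plan is therefore to follow the scheme of that lemma, substituting the inhomogeneous semigroup bound of Proposition \ref{prop} by its homogeneous analogue $\|(-\Delta)^{a/2} e^{\tau\cL} g\|_{\LL^p}\lesssim \tau^{-a/\sigma}\|g\|_{\LL^p}$, valid for $a\geq 0$ and $\tau\in(0,T]$. This analogue is proved exactly as Proposition \ref{prop}, or directly from the scaling $\rho_\tau(x)=\tau^{-d/\sigma}\rho_1(\tau^{-1/\sigma}x)$ of the stable density together with $(-\Delta)^{a/2}\rho_1\in\LL^1$ and Young's inequality.

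First I would apply $(-\Delta)^{\kappa/2}$ under the stochastic integral in \eqref{eq:martingale}. Since $\LL^p(\RR^d)$ has martingale type $2$ for $p>2$ (which holds by Assumption \ref{assump}), the $\LL^p$-valued Burkholder--Davis--Gundy inequality for Poisson stochastic integrals gives
\[
\EE\big[\|(-\Delta)^{\kappa/2}\mathcal{M}_t^N\|_{\LL^p}^2\big]\;\lesssim\;\frac{1}{N^2}\sum_{i=1}^N \EE\Big[\int_0^t\!\!\int_{\RR^d-\{0\}} \big\|h_i(s,\cdot,z)\big\|_{\LL^p}^2\,\dd\nu(z)\,\dd s\Big],
\]
where $h_i(s,\cdot,z)=\big((-\Delta)^{\kappa/2}e^{(t-s)\cL}V^N\big)(\cdot-X_{s_-}^{i,N}+z)-\big((-\Delta)^{\kappa/2}e^{(t-s)\cL}V^N\big)(\cdot-X_{s_-}^{i,N})$; the cross terms vanish because the $\mathcal{N}^i$ are independent, and for $p=2$ this reduces to the It\^o isometry. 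The key simplification is that, by translation invariance of $\|\cdot\|_{\LL^p}$, each $\|h_i(s,\cdot,z)\|_{\LL^p}$ equals $\|\Phi_s(\cdot+z)-\Phi_s\|_{\LL^p}$ with $\Phi_s:=(-\Delta)^{\kappa/2}e^{(t-s)\cL}V^N$, which is deterministic and independent of $i$. Hence the right-hand side collapses to $\frac{C}{N}\,\mathcal{J}$ with $\mathcal{J}:=\int_0^t\!\int \|\Phi_s(\cdot+z)-\Phi_s\|_{\LL^p}^2\,\dd\nu(z)\,\dd s$, and it remains to show $\mathcal{J}\lesssim N^{2\beta(\kappa+d-d/p+\delta)}$.

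To estimate $\mathcal{J}$ I would split the L\'evy integral at $|z|=1$. On $|z|>1$ I bound the increment by $2\|\Phi_s\|_{\LL^p}$ and use $\int_{|z|>1}\dd\nu<\infty$; on $|z|\leq1$ I invoke the maximal-function increment bound of Lemma \ref{maxi} followed by the $\LL^p$-boundedness of $\mathbb{M}$, giving $\|\Phi_s(\cdot+z)-\Phi_s\|_{\LL^p}\lesssim |z|\,\|\nabla\Phi_s\|_{\LL^p}$, together with $\int_{|z|\leq1}|z|^2\,\dd\nu(z)<\infty$, which holds precisely because $\sigma<2$. Thus $\mathcal{J}\lesssim\int_0^t(\|\Phi_s\|_{\LL^p}^2+\|\nabla\Phi_s\|_{\LL^p}^2)\,\dd s$. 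Each factor is then estimated by distributing the derivatives between the semigroup and $V^N$: since Riesz transforms are bounded on $\LL^p$, one has $\|\nabla\Phi_s\|_{\LL^p}\lesssim\|(-\Delta)^{(\kappa+1)/2}e^{(t-s)\cL}V^N\|_{\LL^p}$, and writing the latter as $(-\Delta)^{a/2}e^{(t-s)\cL}(-\Delta)^{(\kappa+1-a)/2}V^N$ and combining the homogeneous smoothing bound with the exact scaling $\|(-\Delta)^{b/2}V^N\|_{\LL^p}=N^{\beta(d+b-d/p)}\,|V|_{b,p}$ yields $\|\nabla\Phi_s\|_{\LL^p}\lesssim (t-s)^{-a/\sigma}N^{\beta(d+\kappa+1-a-d/p)}$, and similarly for $\|\Phi_s\|_{\LL^p}$ with one fewer derivative.

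The crux, and the only genuinely delicate point, is the choice of the splitting parameter $a$: integrability of the time singularity $(t-s)^{-2a/\sigma}$ requires $a<\tfrac\sigma2$, while matching the target exponent $2\beta(\kappa+d-d/p+\delta)$ requires $1-a\leq\delta$, i.e.\ $a\geq 1-\delta$. These two conditions are simultaneously satisfiable exactly because Assumption \ref{assump} guarantees $\delta>1-\tfrac\sigma2$, so an admissible $a\in[1-\delta,\tfrac\sigma2)$ exists; this is the precise place where that hypothesis enters. With such an $a$ fixed, $\int_0^t(t-s)^{-2a/\sigma}\dd s\leq C_T$ and $\mathcal{J}\lesssim N^{2\beta(\kappa+d-d/p+\delta)}$, whence the claimed rate $N^{-(1-2\beta(\kappa+d-d/p+\delta))}$ follows. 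A secondary technical care is needed when $\kappa<0$ (the case $\kappa=-1$ is the one actually invoked in \textbf{Case 2}): the homogeneous norm $|V|_{b,p}$ must stay finite for the negative orders $b=\kappa+1-a\in(-\tfrac\sigma2,0]$ that arise, which holds in the relevant range because the Riesz potential $(-\Delta)^{b/2}V$ decays like $|x|^{-(d+b)}$ at infinity and therefore lies in $\LL^p$ for the large values of $p$ at hand.
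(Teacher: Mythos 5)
You take a genuinely different route from the paper. The paper proves this lemma by declaring it ``line by line'' identical to Lemma \ref{lemma martingale 2}, whose mechanism is: first the Sobolev embedding \eqref{eq:SE2} to pass from $\LL^p$ to $\LL^2$ at the price of $\frac d2-\frac dp$ extra derivatives, then the elementary Hilbertian It\^o isometry in $\LL^2(\Omega\times\RR^d)$, the same split at $|z|=1$, the maximal function Lemma \ref{maxi}, and semigroup smoothing of order $1-\delta$ (your $a=1-\delta$) with the time singularity $(t-s)^{-2(1-\delta)/\sigma}$ integrable precisely because $\delta>1-\frac\sigma2$. You instead stay in $\LL^p$, replacing the embedding-plus-isometry step by the martingale type~$2$ (vector-valued BDG) inequality for compensated Poisson integrals in $\LL^p$, $p>2$ --- a correct but genuinely infinite-dimensional input, in the spirit of the cited reference of Zhu--Brzezniak--Liu. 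The accounting then matches exactly: the paper's embedding cost $\frac d2-\frac dp$ in $\LL^2$-scaling and your direct $\LL^p$-scaling $\|(-\Delta)^{b/2}V^N\|_{\LL^p}=N^{\beta(d+b-d/p)}|V|_{b,p}$ produce the identical exponent $2\beta(\kappa+d-\frac dp+\delta)$, and your identification of $\delta>1-\frac\sigma2$ as exactly what makes $a\in[1-\delta,\frac\sigma2)$ admissible is the same role that hypothesis plays in the paper's proof. So the strategy is sound and buys a proof with no Sobolev embedding, at the cost of a heavier stochastic tool.

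There is, however, a genuine gap in your final paragraph, and it is not merely cosmetic. Your claim that $|V|_{b,p}<\infty$ for the negative orders $b$ ``because the Riesz potential decays like $|x|^{-(d+b)}$ and therefore lies in $\LL^p$ for the large values of $p$ at hand'' means quantitatively $p(d+b)>d$, and since $V$ has unit mass this decay rate is sharp, so the condition is necessary. Assumption \ref{assump} does not guarantee it throughout the stated range of the lemma. In your small-$z$ term with $\kappa=-1$ and $a=1-\delta$ one has $b=\delta-1$, hence one needs $p>d/(d-1+\delta)$: for $d=1$, $\sigma=9/5$ and $\delta$ slightly above $1-\frac\sigma2=\frac1{10}$ this forces $p$ near $10$, while the assumption only requires $p>\max\{\frac{d}{\sigma-1},2\}=2$. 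Worse, in your large-$z$ term with $\kappa=-1$ and $d=1$ the quantity $\|(-\Delta)^{-1/2}e^{(t-s)\cL}V^N\|_{\LL^p}$ is actually \emph{infinite}: the one-dimensional Riesz potential of order $1$ of a unit-mass density involves a logarithmic kernel and does not belong to any $\LL^p$. Your argument is complete for $d\geq 2$, where $b>-\sigma/2\geq-1$ gives $d+b>d-1\geq 1$ and $p>2$ always suffices --- this covers the Keller--Segel and Navier--Stokes applications in \textbf{Case 2} --- but not for the lemma as stated, which allows $d=1$ together with $\kappa=-1$. (To be fair, the paper's own ``line by line'' homogeneous adaptation meets the same low-frequency obstruction at $d=1$, since $\|V^N\|_{\dot W_2^{\kappa+d/2-d/p}}$ requires $\kappa+\frac d2-\frac dp>-\frac d2$; the defect concerns the lemma's stated generality.) You should either restrict to $d\geq2$, strengthen the hypothesis on $p$, or patch the low frequencies separately, e.g.\ by interpolating the trivial bound $\|e^{\tau\cL}V^N\|_{\LL^p}$ on $\tau\lesssim N^{-2\beta}$ against the smoothing bound for larger $\tau$.
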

Therefore, taking expectation and using the above lemma with $\kappa=-1$,  we obtain\begin{align}
\EE\big[\left\Vert  K \mathcal{M}^N_{t} \right\Vert_{\LL^p} \big]\leq \frac{C}{N^{\frac12(1- 2 \beta( -1 +d -d/p + \delta ))}}
\end{align}
Summarizing, after putting all estimates together we have 
\begin{align}
\EE \big[\left\Vert  K(u^N_{t} - u_{t}) \right\Vert_{\LL^p}\big]
&\leq \EE\big[ \left\Vert  K(u^N_{0} - u_{0}) \right\Vert_{\LL^p}\big] +  
 C  \int_0^t    \EE\big[ \left\Vert  u_{s}-u_{s}^{N} \right\Vert_{\LL^p}\big] ~ \dd s \notag \\
& \quad +  C  \int_0^t   
\EE\big[ \| K(u_{s}-u_{s}^{N}) \|_{\LL^p} \big]~ \dd s  \label{eq:esK2}\\ & 
\quad +  \frac{C}{N^{\beta(\lambda-d/p) }}  +  \frac{C}{N^{\frac12(1-  2\beta( -1 +d -d/p + \delta ))}}\notag. 
\end{align}
Therefore, summing (\ref{eq:esK}) with  (\ref{eq:esK2}), and recalling the definition of the distorted norm $\|\cdot\|_{\tilde{\cX}}$, we get
\begin{align}
\EE \big[\left\Vert  u^N_{t} - u_{t} \right\Vert_{\tilde{\mathcal{X}}}\big]
&\leq \EE\big[\left\Vert  u^N_{0} - u_{0} \right\Vert_{\tilde{\mathcal{X}}}\big] +  
 C  \int_0^t   \bigg(\frac{1}{(t-s)^{(1+\alpha)/\sigma}} +1\bigg)
\EE \big[\left\Vert  u_{s}-u_{s}^{N} \right\Vert_{\tilde{\mathcal{X}}}\big] ~ \dd s \notag \\
& \quad +  \frac{C}{N^{\beta(\lambda-d/p) }}  +  \frac{C}{N^{\frac12(1-  2\beta( \alpha +d -d/p + \delta ))}} . 
\end{align}
Applying Gronwall's Lemma we have proved Theorem \ref{th:rate2sing} in \textbf{Case 2}. \qed

\section{Proof of Corollary \ref{Colo}}
\label{sec:proof2}
Let $q\geq 1$ and $\varepsilon >0$. 
Recall that $p_{\epsilon}$  is  the conjugate of $q-\epsilon$, that is $\frac{1}{p_{\epsilon}} + \frac{1}{q-\epsilon}=1 ,$ and recall the definition of $r_\varepsilon$ given in \eqref{eq:defr}. 
We take $u_{0}\in \HH_{q-\epsilon}^{-r_{\epsilon}}$, and we shall prove that  
$u\in C([0,T],  \HH_{q-\epsilon}^{-r_{\epsilon}})$. 

First let us show that 
$ K(u)\in C([0,T],  \HH_{p_{\epsilon}}^{r_{\epsilon}})$.   From the mild formulation and triangular inequality we obtain 
\begin{align}
\left\Vert K(u_{t}) \right\Vert_{r_{\epsilon},p_{\epsilon}}
&\leq \left\Vert K( e^{t\mathcal{L}}  u_{0} ) \right\Vert_{r_{\epsilon},p_{\epsilon}} \vphantom{\int_0^1} \notag +
\int_0^t  \left\Vert K  \nabla e^{(t-s)\mathcal{L} } u_{s}  K( u_s)  \right\Vert_{r_{\epsilon},p_{\epsilon}} ~ \dd s  . \label{milK}
\end{align}
By the assumption of Corollary \ref{Colo}, for any $t\geqslant 0$, we have
\[
\left\Vert K( e^{t\mathcal{L}}  u_{0} ) \right\Vert_{r_{\epsilon},p_{\epsilon}}\leq C.
\]
Moreover, from the  semigroup property \eqref{eq:semigroup} we may write
\begin{align*}
\int_0^t  \left\Vert K  \nabla e^{(t-s)\mathcal{L} } u_{s}  K( u_s)  \right\Vert_{r_{\epsilon},p_{\epsilon}} ~ \dd s &
\leq \int_0^t  \frac{1}{(t-s)^{ d/\sigma  (1/p_{\epsilon}- 1/p)}} \left\Vert \nabla K ( u_{s}  K( u_s))  \right\Vert_{r_{\epsilon},p} ~ \dd s.
\end{align*} 
We observe that from our assumption, $ d/\sigma  (1/p_{\epsilon}- 1/p)<1$. Now,   since $\alpha> r_{\epsilon}$, 
$\HH_{p}^{\alpha}$ is an algebra. Using the hypothesis \eqref{eq:dCK}  and (\ref{eq:dCK2}) we obtain 
\begin{align*}
\int_0^t  \frac{1}{(t-s)^{ d/\sigma  (1/p_{\epsilon}- 1/p)}}& \left\Vert \nabla K ( u_{s}  K( u_s))  \right\Vert_{r_{\epsilon},p} ~ \dd s \\
& 
\leq  \int_0^t  \frac{1}{(t-s)^{ d/\sigma  (1/p_{\epsilon}- 1/p)}} \left\Vert \nabla K ( u_{s}  K( u_s))  \right\Vert_{\alpha,p} ~ \dd s \\
& 
= \int_0^t  \frac{1}{(t-s)^{  d/\sigma  (1/p_{\epsilon}- 1/p)}} \left\Vert  \nabla K 
(I- A)^{\alpha/2}( u_{s}  K( u_s))  \right\Vert_{\LL^p} ~ \dd s
\\
& \leq  C
 \int_0^t  \frac{1}{(t-s)^{  d/\sigma  (1/p_{\epsilon}- 1/p)}} \left\Vert  
(I- A)^{\alpha/2}( u_{s}  K( u_s))  \right\Vert_{\LL^p} ~ \dd s
\\ & =  C
 \int_0^t  \frac{1}{(t-s)^{  d/\sigma  (1/p_{\epsilon}- 1/p)}} \left\Vert  
 u_{s}  K( u_s)  \right\Vert_{\alpha,p} ~ \dd s
\\  
& \leq 
\int_0^t  \frac{1}{(t-s)^{  d/\sigma  (1/p_{\epsilon}- 1/p)}} \left\Vert   u_{s} \right\Vert_{\alpha,p} \left\Vert K( u_s)  \right\Vert_{\alpha,p} ~ \dd s
\\
& \leq   \int_0^t  \frac{1}{(t-s)^{ d/ \sigma (1/p_{\epsilon}- 1/p)}} \left\Vert   u_{s}   \right\Vert_{\alpha,p}^{2} ~ \dd s.
\end{align*}
From this we conclude  $ K(u)\in C([0,T],  \HH_{p_{\epsilon}}^{r_{\epsilon}})$.
\medskip

Now, let us prove that $u\in C([0,T],\HH_{q-\varepsilon}^{-r_\varepsilon})$.
Once again, from the mild formulation and triangular inequality we obtain 
\begin{align}
\left\Vert u_{t} \right\Vert_{-r_{\epsilon},q-\epsilon}
&\leq \left\Vert  e^{t\mathcal{L}}  u_{0} \right\Vert_{-r_{\epsilon},q-\epsilon} \vphantom{\int_0^1} \notag +
\int_0^t  \left\Vert  \nabla  e^{(t-s)\mathcal{L} } u_{s}  K( u_s)  \right\Vert_{-r_{\epsilon},q-\epsilon} ~ \dd s .  \label{milnega}
\end{align}
We also have 
\[
\left\Vert  e^{t\mathcal{L}}  u_{0} \right\Vert_{-r_{\epsilon},p}\leq
\left\Vert  u_{0} \right\Vert_{-r_{\epsilon},p}\leq C.
\]
From the semigroup property,  by duality  we deduce 
\begin{align*}
\int_0^t  \left\Vert  \nabla  e^{(t-s)\mathcal{L} } u_{s}  K( u_s)  \right\Vert_{-r_{\epsilon},q-\epsilon} ~ \dd s \notag
&
\leq \int_0^t   \frac{1}{(t-s)^{1/\sigma}}     \left\Vert u_{s}  K( u_s)  \right\Vert_{-r_{\epsilon},q-\epsilon} ~ \dd s \notag
\\ & 
\leq \int_0^t   \frac{1}{(t-s)^{1/\sigma}}     \left\Vert u_{s}  \right\Vert_{-r_{\epsilon},q-\epsilon}    \left\Vert K( u_s)  \right\Vert_{r_{\epsilon},p_{\epsilon}} ~ \dd s \notag.
\end{align*}
Therefore
\begin{align*}
\left\Vert u_{t} \right\Vert_{-r_{\epsilon},q-\epsilon}
&\leq   C  +
\int_0^t  \frac{1}{(t-s)^{1/\sigma}}     \left\Vert u_{s}  \right\Vert_{-r_{\epsilon},q-\epsilon}    \left\Vert K( u_s)  \right\Vert_{r_{\epsilon},p_{\epsilon}} ~ \dd s  \\ 
&\leq C + \int_0^t  \frac{1}{(t-s)^{1/\sigma}}     \left\Vert u_{s}  \right\Vert_{-r_{\epsilon},q-\epsilon}    ~ \dd s.
\end{align*}
From Gronwall's Lemma we conclude $u\in C([0,T],  \HH_{q-\epsilon}^{-r_{\epsilon}})$.
Now, we will prove that
\[
\sup_{t\in [0,T]}\|  \mu_{t}^N  \|_{-r_{\epsilon}, q-\epsilon}\leq C.
\]
This holds because
\begin{align*}
\|  \mu_{t}^N  \|_{-r_{\epsilon}, q-\epsilon}&= \sup_{\| \varphi  \|_{ r_{\epsilon},p_{\epsilon}} \leq 1 } |\langle \mu_{t}^N, \varphi \rangle |
\\
&\leq  \sup_{\| \varphi  \|_{ r_{\epsilon},p_{\epsilon}} \leq 1 } \sup_{x\in\RR^{d}} |\varphi(x)|
\\ &
\leq   C\sup_{\| \varphi  \|_{ r_{\epsilon},p_{\epsilon}} \leq 1 }   \| \varphi  \|_{ r_{\epsilon},p_{\epsilon}}=C
\end{align*}
 where we used the Sobolev embedding \eqref{eq:SE1} (recall that we assume $r_{\epsilon}= \alpha(2\theta_{\epsilon} -1)> d/{p_{\epsilon}}$). We are now ready to conclude the proof of Corollary \ref{Colo}.
 
 Applying  the triangular inequality we have
\begin{align*}
   \sup_{t\in [0,T]}\EE \big[ \| \mu_{t}^N-u_t \|_{-\alpha ,q}\big]&\leq
	  \sup_{t\in [0,T]}\EE \big[\| \mu_{t}^N-u_t^{N} \|_{-\alpha,q} \big] +
		\sup_{t\in [0,T]}\EE \big[ \| u_t^N-u_t \|_{-\alpha, q}\big]
\\ & 
	=: {\rm I}_{1} + {\rm I}_{2}. 
\end{align*}
We first estimate ${\rm I}_{2}$, by interpolation  and using  Theorem \ref{th:rate2sing}
we deduce 
\begin{align*}
 {\rm I}_{2}&\leq     \sup_{t\in [0,T]}\EE\big[ \|  u_t^N-u_t \|_{-r_{\epsilon}, q-\epsilon}^{\theta_{\epsilon}}\big] \; 
\sup_{t\in [0,T]}\EE \big[\| u_t^N-u_t \|_{\alpha, p}^{1- \theta_{\epsilon}}\big]
\\ &
\leq C \sup_{t\in [0,T]}\EE\big[ \| u_t^N-u_t \|_{\alpha, p}^{1- \theta_{\epsilon}}\big]=
C_{T} \sup_{t\in [0,T]} \EE \big[\|u^N_0- u_0 \|_{\tilde{\mathcal{X}}}^{1-\theta_{\epsilon}} \big] + 
 \frac{C}{N^{\rho(1- \theta_{\epsilon})}}. 
\end{align*}
Now, we estimate ${\rm I}_{1}$, we observe 
\begin{align*}
  \| \mu_{t}^N-u_{t}^{N} \|_{-\alpha,q}&= \sup_{\|\varphi \|_{\alpha,p} \leq 1} |\langle \mu_{t}^N, (V^{N}\ast\varphi)(.)- \varphi(.)  \rangle|
\\ & 
\leq \sup_{\|\varphi \|_{\alpha,p} \leq 1} \sup_{x\in \RR^{d}}| (V^{N}\ast\varphi)(x)- \varphi(x)| 
	\\ & 
\leq  \frac{C}{N^{\beta(\alpha -d/p )}}, 
	\end{align*}
where we used  the Sobolev embedding \eqref{eq:SE1} since $\alpha > \frac dp$.  This ends the proof. \qed

\appendix 
\section{Intermediate lemmas}
\label{sec:app}

In this appendix we prove the three intermediate lemmas used in the main argument.

\subsection{Uniform bounds for  $u^{N}$: proof of Lemma \ref{estimation}}
\label{sec:bound}

We recall and prove the following:

\begin{lemma}\label{estimation2} Take any $\kappa$ such that $-1 \leq \kappa\leq \alpha < \sigma-1$, recalling that Assumption \ref{assump} is verified. There exists a constant $C>0$ such that, for any $t\in[0,T]$ and $N\in\mathbb{N}$, it holds
\[
\mathbb{E}\big[  \big\Vert \left(  I-A \right) ^{\kappa/2}u_{t}
^{N}\big\Vert^2_{\LL^{p}}\big]  \leq
C.
\]
\end{lemma}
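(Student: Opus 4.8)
The plan is to first reduce to the single endpoint exponent $\kappa=\alpha$, and then to close a weakly singular Gronwall inequality directly on the mild formula \eqref{eq:mild2}.

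For the reduction, I would observe that whenever $-1\le \kappa\le\alpha$ the operator $(I-A)^{(\kappa-\alpha)/2}$ has nonpositive order, hence is a convolution against an $\LL^1$ Bessel kernel and is bounded on $\LL^p$. Consequently
\[
\big\|(I-A)^{\kappa/2}u_t^N\big\|_{\LL^p}=\big\|(I-A)^{(\kappa-\alpha)/2}(I-A)^{\alpha/2}u_t^N\big\|_{\LL^p}\le C\,\|u_t^N\|_{\alpha,p},
\]
so it suffices to establish $\sup_{t\in[0,T]}\sup_{N}\EE[\|u_t^N\|_{\alpha,p}^2]<\infty$.

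Next I would apply $(I-A)^{\alpha/2}$ to \eqref{eq:mild2} and split into three contributions by the triangle inequality. (i) The initial term is handled by the $\LL^p$-contractivity of $e^{t\mathcal{L}}$ (convolution with a probability density) together with the commutation with $(I-A)^{\alpha/2}$, giving $\|e^{t\mathcal{L}}u_0^N\|_{\alpha,p}\le\|u_0^N\|_{\alpha,p}$, whose second moment is bounded uniformly in $N$ by \eqref{initial cond}. (ii) For the drift term I would commute $\nabla$ with the semigroup and use Proposition \ref{prop} with exponent $(\alpha+1)/2$ to extract the factor $(t-s)^{-(\alpha+1)/\sigma}$. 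The crucial point is that $\bF$ is bounded and $V^N\ge 0$, so pointwise $\big|V^N\ast(\bF(K(u_s^N))\mu_s^N)\big|\le (M+1)\,u_s^N$, whence the $\LL^p$ norm of the argument is controlled by $\|u_s^N\|_{\LL^p}\lesssim\|u_s^N\|_{\alpha,p}$ via \eqref{eq:SE0}. (iii) The martingale term is bounded in mean square by Lemma \ref{lemma martingale 2} with $\kappa=\alpha$; condition \eqref{eq:alpha} guarantees $1-2\beta(\alpha+d-\tfrac dp+\delta)>0$, so this contribution is uniformly bounded (indeed vanishing as $N\to\infty$).

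Finally, to close the estimate I would take $\LL^2(\Omega)$ norms in the mild identity and apply Minkowski's integral inequality to the drift term, obtaining, with $\gamma:=(\alpha+1)/\sigma$ and $\Psi(t):=\EE[\|u_t^N\|_{\alpha,p}^2]^{1/2}$,
\[
\Psi(t)\le C_0+C\int_0^t (t-s)^{-\gamma}\Psi(s)\,\dd s,
\]
where $C_0$ collects the uniformly bounded initial and martingale contributions. Since $\alpha<\sigma-1$ forces $\gamma<1$, the kernel is integrable and the weakly singular Gronwall (Henry) lemma yields $\sup_{t\in[0,T]}\Psi(t)\le C$ uniformly in $N$, which is the desired bound. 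The only real subtlety I anticipate is securing the a priori finiteness of $\Psi(t)$ needed to run the singular Gronwall argument: this follows from the smoothness and compact support of $V^N$ together with the finite moments of the $X_s^{i,N}$ (so that $\Psi(t)<\infty$ for each fixed $N$), or equivalently by iterating the integral inequality. Once this is in place, the integrability of $(t-s)^{-\gamma}$ is exactly what makes the argument work, and that is precisely guaranteed by $\alpha<\sigma-1$ in Assumption \ref{assump}.
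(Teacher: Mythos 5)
Your proposal is correct and follows essentially the same route as the paper's proof in Appendix \ref{sec:app}: both estimate the three terms of the mild formula \eqref{eq:mild2} using the initial condition \eqref{initial cond}, the semi-group bound of Proposition \ref{prop} with exponent $(1+\kappa)/2$ combined with the boundedness of $\bF$, the martingale bound of Lemma \ref{lemma martingale 2}, and a singular Gronwall lemma whose kernel $(t-s)^{-(1+\kappa)/\sigma}$ is integrable precisely because $\kappa\leq\alpha<\sigma-1$. Your preliminary reduction to the endpoint $\kappa=\alpha$ via the $\LL^p$-boundedness of the nonpositive-order Bessel operator $(I-A)^{(\kappa-\alpha)/2}$ is a harmless cosmetic variation (the paper simply runs the identical argument for each $\kappa$), and your remark on the a priori finiteness of $\Psi(t)$ needed to run the Gronwall argument is a legitimate point that the paper passes over silently.
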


\begin{proof} We denote $\mathfrak{X}=\LL^{p}(\mathbb{R}^{d})$. From
 the mild formulation (\ref{eq:mild2})  we  obtain 
\begin{align}
\big\Vert \left(  I-A\right)^{\kappa/2}&u
_{t}^{N}\big\Vert _{\LL^{2}(  \Omega, \mathfrak{X})  } \notag \vphantom{\int}\\ &
\leq\big\Vert \left(  I-A\right)  ^{\kappa/2}e^{t\mathcal{L}%
}u_{0}^{N}\big\Vert _{\LL^{2}(  \Omega, \mathfrak{X})  } \label{eq:1} \vphantom{\int}\\
&  \qquad +\int_{0}^{t}\big\Vert \left(  I-A\right)  ^{\kappa/2}\nabla
e^{\left(  t-s\right)\mathcal{L}}\left(   V^{N} \ast\left(
\bF( K(u_{s}^{N}))\mu_{s}^{N}\right)  \right)  \big\Vert _{\LL^{2}(
\Omega, \mathfrak{X} )  }~ \dd s \label{eq:2}\\
& \qquad + \big\Vert  \left(  I-A\right)^{\kappa/2} \mathcal{M}_t^{N}    \big\Vert _{\LL^{2}(
\Omega, \mathfrak{X})  } \label{eq:3} \vphantom{\int}
\end{align}
where $\mathcal{M}_t^{N}$ has been defined in \eqref{eq:martingale}.

The first term \eqref{eq:1} can be estimated by (since $\kappa \leq \alpha$) \[ \big\Vert \left(  I-A\right) ^{\kappa/2} u_{0}^{N}\big\Vert _{\LL^{2}(  \Omega, \mathfrak{X}) } \leq \big\Vert \left(  I-A\right) ^{\alpha/2} u_{0}^{N}\big\Vert _{\LL^{2}(  \Omega, \mathfrak{X}) }  \]
which is bounded by a constant $C>0$  from Assumption \ref{assump}, see \eqref{initial cond}. 

Let us bound the second term \eqref{eq:2} as follows:
%
\begin{multline*}
 \int_{0}^{t}\big\Vert \left(  I-A\right)^{\kappa/2}\nabla
e^{\left(  t-s\right) \mathcal{L}}\left(  V^{N} \ast\left(
\bF( K(u_{s}^{N}))\mu_{s}^{N}\right)  \right)  \big\Vert _{\LL^{2}(
\Omega, \mathfrak{X})  }~ \dd s\\
    \leq  C \ \int_{0}^{t}\big\Vert (  I-A  )^{(1+\kappa)/2}
e^{\left(  t-s\right) \mathcal{L}}\big\Vert _{\LL^{p}
\rightarrow \LL^{p}  }\ \big\Vert  
 V^{N} \ast\left(  \bF( K(u_{s}^{N}))\mu_{s}^{N}\right)
  \big\Vert _{\LL^{2}(  \Omega,  \mathfrak{X} )}~ \dd s.
\end{multline*}
 We have, from Proposition \ref{prop} applied with exponent $(1+\kappa)/2\geq 0$, that
\begin{equation*}
 \big\Vert \left(  I-A\right)^{(1+\kappa)/2}e^{\left(  t-s\right)\mathcal{L}} \big\Vert _{\LL^{p}  \rightarrow \LL^{p} } \leq
 \frac{C_{\kappa,\sigma,T,p}}{(t-s)^{ (1+\kappa)/\sigma}}.
\end{equation*}
Therefore, since $\bF$ is bounded, there is $C>0$ such that \eqref{eq:2} is bounded by 
\[
\int_{0}^{t}\frac{C}{\left(  t-s\right) ^{(1+\kappa)/\sigma}}\big\Vert  u_{s}^{N}\big\Vert _{\LL^{2}(
\Omega,  \mathfrak{X}) }~ \dd s.
\]
The estimate of the third term \eqref{eq:3} is quite tricky and we
postpone it to Lemma   \ref{lemma martingale 2} below. Collecting the three
bounds together, we have%
\[
\big\Vert \left(  I-A\right) ^{\kappa/2}u%
_{t}^{N}\big\Vert _{\LL^{2}(  \Omega,  \mathfrak{X})  }%
\leq C  +\int_{0}^{t}\frac{C}{\left(  t-s\right)
^{(1+\kappa) /\sigma}}\big\Vert u_{s}^{N}\big\Vert _{\LL^{2}(
\Omega,  \mathfrak{X})}~ \dd s.
\]
We may apply  Gronwall's Lemma, and since $1+\kappa \leq 1+\alpha < \sigma$ from Assumption \ref{assump}, we conclude 
\[
\big\Vert \left(  I-A\right)^{\kappa/2}u_{t}^{N}\big\Vert _{\LL^{2}(  \Omega,  \mathfrak{X})  }%
\leq C
\]
as wanted.
\end{proof}


\subsection{Martingale estimates}

We recall and prove the second intermediate lemma: 

\begin{lemma} \label{lemma martingale 2bis} Take any $\kappa$ such that $-1 \leq \kappa \leq \alpha < \sigma -1$, recalling that Assumption \ref{assump} is verified.  Then, there exists a constant $C>0$, such that,  for any  $N\in\mathbb N$ and $t\geq 0$ it holds,
\[
\EE \Big[\big\Vert  \left(  I-A\right)^{\kappa/2} \mathcal{M}_t^{N}    \big\Vert _{\LL^{p}}^{2} \Big]\leq
\frac{C}{N^{  1-  2\beta( \kappa +d -d/p + \delta )     }} . 
\]
\end{lemma}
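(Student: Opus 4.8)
The plan is to compute the second moment of $(I-A)^{\kappa/2}\mathcal M_t^N$ \emph{pointwise} in the space variable via the Itô isometry for the compensated Poisson integrals, to use the independence of the driving Lévy processes to extract the crucial factor $N^{-1}$, and finally to balance the smoothing of the semigroup against the concentration of $V^N$, where the hypothesis $\delta>1-\frac\sigma2$ will be exactly what is needed.

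First I would commute $(I-A)^{\kappa/2}$ through the semigroup in \eqref{eq:martingale}, so that $(I-A)^{\kappa/2}\mathcal M_t^N$ has the same structure as $\mathcal M_t^N$ but with $V^N$ replaced by $g_{t-s}:=(I-A)^{\kappa/2}e^{(t-s)\mathcal L}V^N$. Since $p\geq 2$ by Assumption \ref{assump}, the generalized Minkowski inequality gives
\[
\EE\big[\|(I-A)^{\kappa/2}\mathcal M_t^N\|_{\LL^p}^2\big]\leq \big\| x\mapsto \EE\big[|(I-A)^{\kappa/2}\mathcal M_t^N(x)|^2\big]\big\|_{\LL^{p/2}},
\]
so it suffices to control the pointwise second moment. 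For fixed $x$, the contributions of distinct indices $i$ are stochastic integrals against the \emph{independent} compensated measures $\tilde{\mathcal N}^i$, hence the cross terms vanish and the Itô isometry yields
\[
\EE\big[|(I-A)^{\kappa/2}\mathcal M_t^N(x)|^2\big]=\frac1{N^2}\sum_{i=1}^N\EE\Big[\int_0^t\!\!\int_{\RR^d-\{0\}}\big|g_{t-s}(x-X^{i,N}_{s_-}+z)-g_{t-s}(x-X^{i,N}_{s_-})\big|^2\,\dd\nu(z)\,\dd s\Big].
\]
Taking the $\LL^{p/2}_x$ norm and applying Minkowski again to pull out the sum and the time integral, the translation invariance of the $\LL^{p/2}$ norm removes the random shift $X^{i,N}_{s_-}$; the $N$ identical terms divided by $N^2$ then produce the gain $N^{-1}$, leaving (after the change of variables $r=t-s$)
\[
\EE\big[\|(I-A)^{\kappa/2}\mathcal M_t^N\|_{\LL^p}^2\big]\leq \frac CN\int_0^t\Big\|\int_{\RR^d-\{0\}}|g_{r}(\cdot+z)-g_{r}(\cdot)|^2\,\dd\nu(z)\Big\|_{\LL^{p/2}}\,\dd r.
\]

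Next I would estimate the inner quantity. By Minkowski in $\LL^{p/2}$ it is bounded by $\int_{\RR^d-\{0\}}\|g_r(\cdot+z)-g_r(\cdot)\|_{\LL^p}^2\,\dd\nu(z)$. Splitting at $|z|=1$, the region $|z|>1$ contributes $\lesssim\|g_r\|_{\LL^p}^2$ because $\nu(\{|z|>1\})<\infty$, while on $|z|\leq1$ I would invoke the maximal-function estimate of Lemma \ref{maxi} (or simply $\|g_r(\cdot+z)-g_r\|_{\LL^p}\leq|z|\,\|\nabla g_r\|_{\LL^p}$) together with $\int_{|z|\leq1}|z|^2\,\dd\nu(z)<\infty$, which is finite precisely because $\sigma<2$; this contributes $\lesssim\|\nabla g_r\|_{\LL^p}^2$. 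Since $(I-A)^{-1/2}$ and $\nabla(I-A)^{-1/2}$ are bounded Fourier multipliers on $\LL^p$, both terms are controlled by $\|e^{r\mathcal L}V^N\|_{1+\kappa,p}^2$, so the whole estimate reduces to proving
\[
\int_0^t\|e^{r\mathcal L}V^N\|_{1+\kappa,p}^2\,\dd r\leq C\,N^{2\beta(\kappa+d-\frac dp+\delta)}.
\]

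This last step is the main obstacle, and it is where the constraint $\delta>1-\frac\sigma2$ enters. I would split the $1+\kappa$ derivatives between the semigroup and the profile: for $\theta\in[0,\sigma/4)$,
\[
\|e^{r\mathcal L}V^N\|_{1+\kappa,p}\leq\big\|(I-A)^{\theta}e^{r\mathcal L}\big\|_{\LL^p\to\LL^p}\,\|V^N\|_{1+\kappa-2\theta,p}\leq \frac{C}{r^{2\theta/\sigma}}\,N^{\beta(d-\frac dp+1+\kappa-2\theta)},
\]
using Proposition \ref{prop} for the operator norm and the dilation scaling $\|V^N\|_{s,p}\lesssim N^{\beta(d-\frac dp+s)}$, valid for $N\geq1$ and for $s$ of \emph{either} sign (obtained from $V^N(x)=N^{d\beta}V(N^\beta x)$ by scaling the homogeneous Sobolev norm and comparing it to the inhomogeneous one through a Mikhlin multiplier argument). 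Squaring and integrating in $r$ requires $4\theta/\sigma<1$, and the power of $N$ is smallest when $\theta$ is largest; choosing $\theta=\tfrac12\max\{1-\delta,0\}$ makes the $N$-exponent equal to $2\beta(\kappa+d-\frac dp+\delta)$, while the integrability constraint $\theta<\sigma/4$ is met exactly because $\delta>1-\frac\sigma2$. Dividing by $N$ then gives the claimed rate. The genuinely delicate points are this trade-off — resolved by the hypothesis on $\delta$ — and the scaling bound for $\|V^N\|_{s,p}$ at \emph{negative} $s$, which is needed when $\kappa$ is close to $-1$ (the value used in \textbf{Case 2}).
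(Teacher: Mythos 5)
Your proposal is correct, and it reaches the stated rate by a route that differs from the paper's in its functional-analytic packaging, though the skeleton is the same. What is shared: the Itô isometry with independence of the $\tilde{\mathcal N}^i$ producing the factor $N^{-1}$, the split of the Lévy measure at $|z|=1$ (finiteness of $\nu$ on $\{|z|>1\}$ versus $\int_{|z|\le1}|z|^2\,\dd\nu<\infty$), and the trade-off between semigroup smoothing and the scaling of $V^N$; your choice $\theta=\tfrac12\max\{1-\delta,0\}$ with the integrability constraint $4\theta/\sigma<1$ is exactly the paper's step $\|\nabla(I-A)^{\eta}e^{r\mathcal L}V^N\|_{\LL^2}\lesssim r^{-2(1-\delta)/\sigma}\|(I-A)^{\eta+\delta/2}V^N\|_{\LL^2}$, so the hypothesis $\delta>1-\tfrac\sigma2$ plays the identical role in both arguments. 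What differs: the paper first applies the embedding \eqref{eq:SE2} with $q=2<p$, reducing everything to $\LL^2(\Omega\times\RR^d)$, where the isometry is immediate after Fubini, and it then needs the Hardy--Littlewood maximal-function machinery (Lemma \ref{maxi}) to handle the small-jump increments pointwise; you instead stay in $\LL^p$ throughout, using the pointwise isometry plus the generalized Minkowski inequality (legitimate since $p>2$ by Assumption \ref{assump}) and the elementary translation bound $\|g_r(\cdot+z)-g_r\|_{\LL^p}\le|z|\,\|\nabla g_r\|_{\LL^p}$, thereby avoiding the maximal function entirely. The two routes give the same exponent because the $\LL^2$-scaling of $V^N$ exactly offsets the $d/2-d/p$ derivatives paid by the embedding — your direct computation makes this cancellation invisible rather than miraculous, which is arguably cleaner. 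One caveat you correctly flag but should quantify: the negative-order scaling $\|V^N\|_{s,p}\lesssim N^{\beta(d-\frac dp+s)}$ for $s<0$ requires not only the Mikhlin comparison $\|f\|_{s,p}\lesssim|f|_{s,p}$ but also $|V|_{s,p}<\infty$; since $\int V=1\neq0$, the Riesz potential $(-\Delta)^{s/2}V$ decays like $|x|^{-(d-|s|)}$, so this needs $|s|<d-\frac dp$. In the regime where negative $s$ actually arises ($\kappa=-1$, $\delta<1$, so $s=\delta-1$ with $|s|<\sigma/2$) this is automatic for $d\ge2$ since $d-\frac dp>1$, and only in the corner $d=1$ does it impose an extra (mild) constraint — a point the paper's own proof glosses over equally, as its quantity $\|(I-A)^{\eta+\delta/2}V^N\|_{\LL^2}$ can likewise carry negative total order there.
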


\begin{proof}  From the Sobolev embedding \eqref{eq:SE2}, applied with $q=2<p$ by hypothesis (point \emph{2.} of Assumption \ref{assump}), we have 
\[
\EE \Big[ \big\Vert  \left(  I-A\right)^{\kappa/2} \mathcal{M}_t^{N}    \big\Vert _{\LL^{p}}^{2} \Big]
\leq C \EE \Big[\big\Vert  \left(  I-A\right)^{(\kappa+ d/2 -d/p)/2} \mathcal{M}_t^{N}    \big\Vert _{\LL^{2}}^{2}\Big]. 
 \]
For the sake of simplicity we introduce the notation: $\eta:=\frac12(\kappa + \frac d 2 - \frac d p)$.  Recall the definition of $\mathcal{M}_t^N$ in \eqref{eq:martingale}, and let us decompose $\mathcal{M}_t^N$ according to the sum of two integrals, over $|z|\leq 1$ and $|z| \geq 1$, as follows: 
\begin{align*}
& \mathcal{M}_t^{N}
 = {\rm I}_1 + {\rm I}_2 \\
 & =: \frac{1}{N}\sum_{i=1}^{N} \int_{0}^{t} \int_{|z|\geq 1}  \Big\{ \big(
e^{(t-s)\mathcal{L}} V^{N} \big)  \big(  x-X_{s_{-}}^{i,N} + z  \big)  -   
   \big(e^{(t-s)\mathcal{L}} V^{N}\big)\big( x-X_{s_{-}}^{i,N}\big) \Big\} \tilde{\mathcal N}^{i}(\dd s\dd z)\\
& +   \frac{1}{N}\sum_{i=1}^{N} \int_{0}^{t} \int_{|z|\leq1}  \Big\{ \big(
e^{(t-s)\mathcal{L}} V^{N} \big)  \big(  x-X_{s_{-}}^{i,N} + z  \big)  -   
   \big(e^{(t-s)\mathcal{L}} V^{N} \big)\big( x-X_{s_{-}}^{i,N}\big)\Big\}  \tilde{\mathcal N}^{i}(\dd s\dd z). 
\end{align*}
The reason for this decomposition is the following: when $|z|\geq 1$ we can use the fact that $\int_{|z|\geq 1} \dd\nu(z) < \infty$ ; while when $|z|\leq 1$  we know that $\int_{|z|\leq 1} |z|^2 \dd\nu(z) < \infty$.  For the sake of clarity, along this proof we denote
\begin{equation}\label{eq:fn}
 \mathcal{F}^N(s,x,z) := V^{N}  \big(  x-X_{s_{-}}^{i,N} + z  \big)  -   
   V^{N}\big( x-X_{s_{-}}^{i,N}\big).
\end{equation}

\medskip

\noindent 
\textsc{Step 1}.  Let us start with ${\rm I}_1$. We use directly the $\LL^2$ norm in the product space $\Omega \times \RR^d$: we have 
\begin{align*}
  \big\Vert (  I&-A)^{\eta}  \; {\rm I}_1\big\Vert _{\LL^{2}(  \Omega\times
\mathbb{R}^{d})  }^{2}\\
&  =
\frac{2}{N^{2}} \int_{\mathbb{R}^{d}}\mathbb{E}\Bigg[  \Bigg\vert \sum_{i=1}^{N} \int_{0}^{t}
\int_{|z|\geq 1}  \left(  I-A\right)^{\eta} e^{(t-s)\mathcal{L}}  \mathcal{F}^N(s,x,z) 
 \tilde{\mathcal N}^{i}(\dd s\dd z)   \Bigg\vert ^{2}\Bigg]  \dd x\\
&  =\frac{2}{N^{2}}     \sum_{i=1}^{N} \int_{\mathbb{R}^{d}}\mathbb{E}\left[  \int_{0}^{t}
\int_{|z|\geq 1} \Big| \left(  I-A\right)^{\eta}  e^{(t-s)\mathcal{L}}  \mathcal{F}^N(s,x,z) \Big|^{2}
  \dd\nu(z) \dd s    \right]  \dd x \\
&  = \frac{2}{N}  \;   \mathbb{E} \left[  \int_{0}^{t} \int_{|z|\geq 1}  \int_{\mathbb{R}^{d}}
 \Big| \left(  I-A\right)^{\eta}  e^{(t-s)\mathcal{L}}  \mathcal{F}^N(s,x,z) \Big|^{2} \ \dd x      \dd\nu(z) \dd s    \right].  
\end{align*}
 We observe  that, by triangular inequality 
\begin{align*}
 \int_{\mathbb{R}^{d}}
 \Big| \left(  I-A\right)^{\eta}  e^{(t-s)\mathcal{L}}  &\mathcal{F}^N(s,x,z) \Big|^{2} \ \dd x \\ &\leq 2 \int_{\mathbb{R}^{d}}
 \Big| \left(  I-A\right)^{\eta}  e^{(t-s)\mathcal{L}}  V^{N}(x) \Big|^{2} \ \dd x  \leq  C \big\|  V^{N} \big\|_{\HH^{2\eta}_2}^{2}. \vphantom{\int}
\end{align*}
This implies that
\begin{align*}
 \big\Vert \left(  I-A\right)^{\eta} \; {\rm I}_1\big\Vert _{\LL^{2}(  \Omega\times
\mathbb{R}^{d})  }^{2}&\leq
\frac{C}{N}  \big\| V^{N} \big\|_{\HH^{2\eta}_2}^{2} 
 \leq
\frac{C}N   N^{\beta d + 2\beta \eta }\; \|V\|_{\HH^{2\eta}_2}^{2} ,
\end{align*}
where we used a change of variables in the last inequality.

Recall that $2\eta = \alpha+\frac d 2 - \frac d p$, and  $ \|V\|_{\HH^{\alpha+ d/2 -d/p}_2} < \infty$ since $V$ is smooth compactly supported (Assumption \ref{assump}). Therefore we finally get
\[   \big\Vert \left(  I-A\right)^{\eta} \; {\rm I}_1\big\Vert _{\LL^{2}(  \Omega\times
\mathbb{R}^{d})  }^{2} \leqslant \frac{C}{N^{1-\beta (2d + 2\kappa -2d/p)}}.\]

\noindent \textsc{Step 2}. In the same way, for the second part of the integral we obtain 
\begin{multline*}
 \big\Vert (  I-A)^{  \eta}  \; {\rm I}_2\big\Vert _{\LL^{2}(  \Omega\times
\mathbb{R}^{d})  }^{2} \\ = \frac{2}{N^{2}}     \sum_{i=1}^{N}\mathbb{E} \left[  \int_{0}^{t} \int_{|z|\leq  1}  \int_{\mathbb{R}^{d}}
 \big| \left(  I-A\right)^{\eta}  e^{(t-s)\mathcal{L}}  \mathcal{F}^N(s,x,z) \big|^{2} \ \dd x    \dd\nu(z) \dd s    \right]  
\end{multline*}
where $ \mathcal{F}^N$ has been defined in \eqref{eq:fn}. Here,  we use the tool of the maximal function introduced in Section \ref{sec:max}.
From Lemma \ref{maxi} (with $f=V$ which is smooth and compactly supported), and triangular inequality, we get
\begin{align*}
(  I-A)^{\eta}   e^{(t-s)\mathcal{L}}& \mathcal{F}^N(s,x,z)  \\
\leq &\;  C |z|\; \mathbb{M} \Big[ \nabla(I-A)^{ \eta} \; e^{(t-s)\mathcal{L}} V^N\big(x-X_{s_{-}}^{i,N} + z \big)\Big]\\
 & + C |z| \; \mathbb{M}\Big[ \nabla(I-A)^{\eta} \; e^{(t-s)\mathcal{L}} V^N\big(x-X_{s_{-}}^{i,N}\big) \Big].
\end{align*}
Moreover, again from  Lemma \ref{maxi} we have
\[
\Big\|  \mathbb{M} \Big[\nabla(I-A)^{\eta}  e^{(t-s)\mathcal{L}} V^{N} \Big] \Big\|_{\LL^{2}}^{2}\leq C
\Big\|\nabla  (I-A)^{\eta}  e^{(t-s)\mathcal{L}} V^{N} \Big\|_{\LL^{2}}^{2}.
\]
This implies that 
\[
\Big\| (  I-A)^{\eta}  e^{(t-s)\mathcal{L}} \mathcal{F}^N(s,x,z)  \Big\|_{\LL^2}^{2} \leq C \; |z|^2
\; \Big\|\nabla  (I-A)^{\eta}  e^{(t-s)\mathcal{L}} V^{N} \Big\|_{\LL^{2}}^{2}.
\]
Taking $\delta> 1-\frac \sigma 2 $ as in point \textit{4.} of Assumption \ref{assump}, we finally get
\begin{align*}
 \big\Vert (  I-A)^{ \eta } \;  {\rm I}_2\big\Vert _{\LL^{2}(  \Omega\times
\mathbb{R}^{d})  }^{2}&  \leq    \frac{C}{N}  \int_{0}^{t} \big\|\nabla  (I-A)^{ \eta}  e^{(t-s)\mathcal{L}} V^{N} \big\|_{\LL^{2}}^{2}
 ~ \dd s  \\
&  \leq    \frac{C}{N}   \big\| (I-A)^{\eta + \frac12 \delta} \; V^{N}\big\|_{\LL^{2}}^{2} 
 \int_{0}^{t} \frac{1}{(t-s)^{2(1-\delta)/ \sigma}} \dd s 
\\ & \leq  \frac{C}{N^{1- d\beta- 2 (\kappa+ d/2 -d/p +\delta)\beta  }},
\end{align*}
from the same computations as before. This concludes the proof.
\end{proof}

\begin{lemma} \label{lemma martingale 4bis}  Take any $\kappa$ such that $-1 \leq \kappa \leq \alpha < \sigma -1$, recalling that Assumption \ref{assump} is verified.  Then, there exists a constant $C>0$, such that,  for any  $N\in\mathbb N$ and $t\geq 0$ it holds,
\[
\EE \Big[\big\Vert  \left( -\Delta\right)^{\kappa/2} \mathcal{M}_t^{N}    \big\Vert _{\LL^{p}(\mathbb{R}^{d})}^{2} \Big]\leq
\frac{C}{N^{  1- 2\beta( \kappa +d -d/p + \delta )    }} . 
\]
\end{lemma}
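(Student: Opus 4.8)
The plan is to reproduce the argument of Lemma~\ref{lemma martingale 2bis} essentially verbatim, replacing the inhomogeneous operator $(I-A)^{\kappa/2}$ by the homogeneous one $(-\Delta)^{\kappa/2}$ and the embedding \eqref{eq:SE2} by its homogeneous counterpart $\dot W_2^{\kappa+\frac d2-\frac dp}\hookrightarrow \dot W_p^{\kappa}$, which is licit since $p>2$ (point~\emph{2.} of Assumption~\ref{assump}). This reduces the claim to an $\LL^2$ estimate:
\[
\EE\Big[\big\Vert (-\Delta)^{\kappa/2}\mathcal{M}_t^N\big\Vert_{\LL^p}^2\Big] \leq C\, \EE\Big[\big\Vert (-\Delta)^{\eta}\mathcal{M}_t^N\big\Vert_{\LL^2}^2\Big], \qquad \eta := \tfrac12\Big(\kappa + \tfrac d2 - \tfrac dp\Big).
\]
I would then split $\mathcal{M}_t^N={\rm I}_1+{\rm I}_2$ along the L\'evy--It\^o decomposition into large jumps ($|z|\geq1$) and small jumps ($|z|\leq1$), exactly as before. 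For each piece, the It\^o isometry for the compensated Poisson integral turns the second moment into a deterministic space--time integral, and translation invariance of the $\LL^2$ norm allows one to replace the two translates of $V^N$ inside $\mathcal{F}^N$ (see \eqref{eq:fn}) by $V^N$ itself.

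For ${\rm I}_2$ I would again invoke the maximal-function Lemma~\ref{maxi} to trade the increment $\mathcal{F}^N$ for a factor $|z|$ times a gradient; the bound $\int_{|z|\leq1}|z|^2\,\dd\nu(z)<\infty$ absorbs the $z$-integral, while the extra derivative is handled by Proposition~\ref{prop}, which produces the regularity exponent $\delta>1-\frac\sigma2$ through the convergence of $\int_0^t(t-s)^{-2(1-\delta)/\sigma}\,\dd s$. The only genuinely new input is the scaling identity for the homogeneous norm, $|V^N|_{s,2}^2 = C\,N^{d\beta+2\beta s}\,|V|_{s,2}^2$, which, evaluated at $s=2\eta$ for ${\rm I}_1$ and at $s=2\eta+\delta$ for ${\rm I}_2$, reproduces respectively the powers $N^{-(1-2\beta(\kappa+d-\frac dp))}$ and $N^{-(1-2\beta(\kappa+d-\frac dp+\delta))}$; combining the two by the triangle inequality leaves the smaller (binding) exponent, namely the stated one coming from ${\rm I}_2$.

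The step I expect to be the main obstacle is the treatment of \emph{low frequencies} in ${\rm I}_1$. In contrast with $(I-A)^{\eta}$, whose symbol $(1+|\xi|^2)^{\eta}$ stays bounded near the origin for every $\eta$, the homogeneous symbol $|\xi|^{2\eta}$ is singular at $\xi=0$ whenever $\eta<0$, which is precisely the regime used in the application (there $\kappa=-1$). Hence the crude triangle-inequality bound $\int|(-\Delta)^{\eta}e^{(t-s)\cL}\mathcal{F}^N|^2\,\dd x\leq 2\,|V^N|_{2\eta,2}^2$ is useless, since $|V|_{2\eta,2}$ may diverge. The remedy is to retain the increment structure of $\mathcal{F}^N$, whose Fourier symbol carries the factor $e^{\mathrm{i}\xi\cdot z}-1$ vanishing at $\xi=0$: using $\int_{|z|\geq1}|e^{\mathrm{i}\xi\cdot z}-1|^2\,\dd\nu(z)\lesssim \psi(\xi)\wedge 1$ (with $\psi$ as in \eqref{eq:psi}) together with $\int_0^t e^{-2(t-s)\psi(\xi)}\,\dd s\leq \min\{T,\psi(\xi)^{-1}\}$ and $\psi(\xi)\asymp|\xi|^{\sigma}$, the apparent low-frequency singularity is cancelled and the remaining frequency integral converges thanks to $\sigma>1$. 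This refinement, unnecessary in the inhomogeneous case, is what makes the homogeneous martingale estimate go through while preserving the exponent $1-2\beta(\kappa+d-\frac dp+\delta)$.
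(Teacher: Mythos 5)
Your proposal is correct, and it is essentially the route the paper takes — with one notable difference: the paper's entire proof of this lemma is the single sentence ``the proof follows the previous lemma line by line,'' i.e.~it transcribes the argument of Lemma \ref{lemma martingale 2bis} with $(I-A)^{\kappa/2}$ replaced by $(-\Delta)^{\kappa/2}$, exactly the verbatim repetition you describe in your first two paragraphs. Your reduction via the homogeneous embedding $\dot W_2^{\kappa+d/2-d/p}\hookrightarrow \dot W_p^{\kappa}$ (valid since $p>2$), the large/small jump splitting, the It\^o isometry, the maximal-function treatment of ${\rm I}_2$, and the scaling $|V^N|_{s,2}^2=C\,N^{\beta d+2\beta s}|V|_{s,2}^2$ all match the transcribed argument, including the bookkeeping that makes ${\rm I}_2$ the binding term with exponent $1-2\beta(\kappa+d-d/p+\delta)$.

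Where you go beyond the paper is your last paragraph, and the point you raise is genuine: the ``line by line'' transcription of Step 1 bounds $\Vert(-\Delta)^{\eta}e^{(t-s)\cL}\mathcal F^N\Vert_{\LL^2}$ crudely by $|V^N|_{2\eta,2}$, and since $V$ is a probability density ($\mathcal F V(0)=1$), the homogeneous norm $|V|_{2\eta,2}$ is finite only when $4\eta>-d$. In the application $\kappa=-1$, one has $4\eta=-2+d-2d/p$, so for $d=1$ (and borderline small dimensions) $4\eta\leq -d$ and the crude bound is indeed vacuous — exactly the regime where the lemma is invoked in \textbf{Case 2}. Your fix is the right one: keeping the increment structure, the Fourier symbol of $\mathcal F^N$ carries the factor $e^{\mathrm i\xi\cdot z}-1$, and combining $\int_{|z|\geq1}|e^{\mathrm i\xi\cdot z}-1|^2\,\dd\nu(z)\lesssim \min(1,|\xi|^{\sigma})$ with $\int_0^t e^{-2(t-s)\psi(\xi)}\dd s\leq \min(T,\psi(\xi)^{-1})$ cancels the low-frequency singularity; one checks $4\eta+\sigma+d\geq \sigma-1+2d(1-1/p)-d+d>0$ for all $\kappa\geq-1$, $p>2$, so the frequency integral converges, and the resulting low-frequency block is in fact $O(N^{-1})$, leaving ${\rm I}_1$ with the exponent $1-2\beta(\kappa+d-d/p)$ as claimed. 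Note also that ${\rm I}_2$ needs no such repair, since there the relevant homogeneous order $2\eta+1+\delta\geq d/2-d/p+\delta>0$ is positive. So your proof not only reproduces the paper's argument but patches a step that the paper's one-line proof-by-reference glosses over.
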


\begin{proof}
The proof follows the previous lemma line by line.
\end{proof}



\bibliographystyle{ams}

\end{document}